\newtheorem{assumption}{Assumption}
\newtheorem{definition}{Definition}
\newtheorem{corollary}{Corollary}
\newtheorem{theorem}{Theorem}
\newtheorem{lemma}{Lemma}
\newtheorem{remark}{Remark}
\newcommand{\hs}{\mathrm{HT}_s}
\newcommand{\argmin}{\mathop{\mathrm{argmin}}}
\newcommand{\pj}{\mathrm{PM}_s}
\title{Sparse Polyak: an adaptive step size rule for high-dimensional M-estimation}
\author{%
  Tianqi Qiao\\ 
  Texas A\&M University\\
  College Station, TX, USA\\
  \texttt{tianqi.qiao@tamu.edu} \\
    \And
    Marie Maros \\
    Texas A\&M University \\
    College Station, TX, USA\\
    \texttt{mmaros@tamu.edu}
}
\begin{document}

\maketitle

\begin{abstract}
We propose and study Sparse Polyak, a variant of Polyak's adaptive step size, designed to  solve high-dimensional statistical estimation problems where the problem dimension is allowed to grow much faster than the sample size. In such settings, the standard Polyak step size performs poorly,  requiring an increasing number of iterations to achieve optimal statistical precision-even when, the problem remains well conditioned and/or the achievable precision itself does not degrade with problem size.  We trace this limitation to a mismatch in how smoothness is measured: in high dimensions, it is no longer effective to estimate the Lipschitz smoothness constant. Instead, it is more appropriate to estimate the smoothness restricted to specific directions relevant to the problem (restricted Lipschitz smoothness constant). Sparse Polyak overcomes this issue by modifying the step size to estimate the restricted Lipschitz smoothness constant. We support our approach with both theoretical analysis and numerical experiments, demonstrating its improved performance.

\end{abstract}

\section{Introduction \label{sec:intro}}
Consider the high-dimensional statistical estimation problem
\begin{equation}
\label{eq:l0}
\min_{\mathbb{R}^d \ni \theta : \|\theta\|_0 \leq s} f(\theta) = \frac{1}{n} \sum_{i=1}^n \ell(z_i, \theta),
\end{equation}
with data points \( z_i \in \mathbb{R}^d, i=1\dots n \). We focus on the regime in which the dimensionality grows much faster than the sample size, i.e. $\frac{d}{n} \to \infty.$ To obtain consistent estimates in this setting, it is necessary to assume that the true solution exhibits some low-dimensional structure-such as sparsity. In \eqref{eq:l0} sparsity is enforced through the $\ell_0$ constraint, which guarantees that $\theta$ will have at most $s$ non-zero elements. This constraint renders the problem in \eqref{eq:l0} non-convex and, in general, NP-hard, regardless of the objective function $f$ \cite{natarajan1995sparse}. Nevertheless, under certain assumptions on the data,  various algorithms have been developed to efficiently find approximate solutions to \eqref{eq:l0}. Notably, under suitable assumptions that hold for a variety of statistical models, the Iterative Hard Thresholding (IHT) algorithm has been shown to efficiently find sufficiently accurate solutions to \eqref{eq:l0}. The IHT algorithm results from applying projected gradient descent to \eqref{eq:l0} and reads
\begin{align*}
    \theta_{t+1} = \mathrm{HT}_s\left(\theta_t - \gamma \nabla f(\theta_t) \right),
\end{align*}
where $\mathrm{HT}_s$ denotes the hard thresholding operator. $\mathrm{HT}_s$ retains the $s$ largest-magnitude components of its input and sets the remaining to zero. Here, $\gamma > 0$ denotes the step-size which ought to be chosen as $\gamma = 2/(3\bar{L})$ \cite{jain}, where $\bar{L}$ denotes the
\normalem
\emph{restricted} Lipschitz smoothness (RSS) constant, and can be interpreted as the Lipschitz smoothness constant of $f$ when restricted to sparse directions. 

 For a variety of statistical models, such as Generalized Linear Models (GLMs), $\bar{L}$ remains constant as long as $\frac{s \log (d)}{n}$ remains constant, even if $\frac{d}{n} \to \infty.$ This insight underpins the \normalem \emph{rate invariance} of IHT: under suitable conditions, the number of iterations required to achieve (near) optimal statistical precision remains constant even as both $d$ and $n$ grow. 

 Analogous to the Lipschitz smoothness constant, the RSS constant must be estimated in practice. Thus, the natural question in this context, and the starting point to the work in this paper is: \textbf{(i)} Do already existing approaches to adaptively tune $\gamma$ via the estimation of the Lipschitz smoothness constant work in the high-dimensional setting? Our criteria to determine whether a step-size rule works in the high dimension, additional to convergence will be determined by the answers to the following questions: \textbf{(ii)} Can they achieve the same or better guarantees than by choosing the optimal fixed step-size? \textbf{(iii)} Can they guarantee rate invariance as $\frac{d}{n} \to \infty$?
 
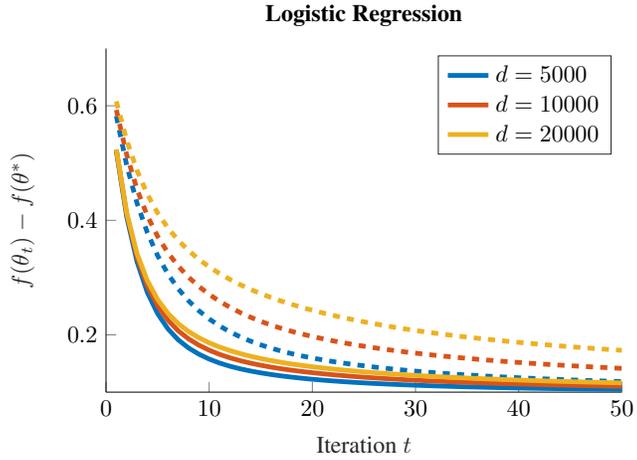
\begin{wrapfigure}[25]{r}{0.6\textwidth} 
\vspace{-10pt}
\scalebox{0.9}{
%
%
\definecolor{mycolor1}{rgb}{0.00000,0.44700,0.74100}%
\definecolor{mycolor2}{rgb}{0.85000,0.32500,0.09800}%
\definecolor{mycolor3}{rgb}{0.92900,0.69400,0.12500}%
\begin{tikzpicture}

\begin{axis}[%
width=3in,
height=2in,
at={(0in,5in)},
scale only axis,
xmin=0,
xmax=50,
xlabel style={font=\color{white!15!black}},
xlabel={Iteration $t$},
ymin=0.1,
ymax=0.7,
ylabel style={font=\color{white!15!black}},
ylabel={$f(\theta_t) - f(\theta^*)$},
axis background/.style={fill=white},
title style={font=\bfseries},
title={Logistic Regression},
axis x line*=bottom,
axis y line*=left,
legend style={legend cell align=left, align=left, draw=white!15!black}
]
\addplot [color=mycolor1, line width=2.0pt]
  table[row sep=crcr]{%
1	0.520921141352307\\
2	0.40674289227258\\
3	0.32815960983839\\
4	0.274218658154153\\
5	0.237000093258838\\
6	0.210479295257586\\
7	0.191246518546191\\
8	0.177103515236277\\
9	0.166271664881586\\
10	0.157720088924913\\
11	0.150837664381476\\
12	0.145297774825048\\
13	0.140727198030687\\
14	0.136903566876644\\
15	0.133636489794635\\
16	0.13078973432708\\
17	0.12831899645147\\
18	0.126138626190506\\
19	0.124198281859956\\
20	0.122479002951605\\
21	0.120927682425998\\
22	0.119535293233516\\
23	0.11827880727237\\
24	0.117138766071461\\
25	0.116099529975012\\
26	0.115148445960851\\
27	0.114274984449874\\
28	0.113470262054281\\
29	0.112726736525671\\
30	0.112037950048113\\
31	0.111398351770015\\
32	0.11080315019118\\
33	0.110248171889814\\
34	0.109729765873587\\
35	0.109244699455821\\
36	0.108790115163969\\
37	0.108363481771613\\
38	0.107962533082389\\
39	0.107585238661331\\
40	0.10722977969039\\
41	0.106894519689271\\
42	0.106577980007415\\
43	0.106278823056652\\
44	0.10599583383505\\
45	0.105727905803741\\
46	0.105474031224562\\
47	0.105233287925362\\
48	0.10500483049788\\
49	0.104787883434735\\
50	0.104581733325351\\
};
\addlegendentry{$d=5000$}

\addplot [color=mycolor1, dashed, line width=2.0pt, forget plot]
  table[row sep=crcr]{%
1	0.582044937197185\\
2	0.496359027511578\\
3	0.430231013784808\\
4	0.378792637989876\\
5	0.338437553812517\\
6	0.306360105522761\\
7	0.280413083976574\\
8	0.259374790996627\\
9	0.242095008539717\\
10	0.227749863466967\\
11	0.215653702602608\\
12	0.205337765256786\\
13	0.196576724913427\\
14	0.189048123769933\\
15	0.182491301215775\\
16	0.176735219128633\\
17	0.171623799737589\\
18	0.167113533755831\\
19	0.163119132679611\\
20	0.159536109381345\\
21	0.156301973074399\\
22	0.153366693739151\\
23	0.150675028229036\\
24	0.148208966201355\\
25	0.145911728865654\\
26	0.14377976252838\\
27	0.141814762073802\\
28	0.139981145923038\\
29	0.138254008361738\\
30	0.136648180900079\\
31	0.135138145097212\\
32	0.133715880290707\\
33	0.132395105843128\\
34	0.131163735737026\\
35	0.130011610373479\\
36	0.12892044312503\\
37	0.127886388917849\\
38	0.126914031423581\\
39	0.125997291122555\\
40	0.125130983978535\\
41	0.124310672192509\\
42	0.12353251032487\\
43	0.122785300895024\\
44	0.122067270355614\\
45	0.121385315174981\\
46	0.120736291694043\\
47	0.120117619575438\\
48	0.119527038933362\\
49	0.118955218579425\\
50	0.118402718755708\\
};
\addplot [color=mycolor2, line width=2.0pt]
  table[row sep=crcr]{%
1	0.52294421012249\\
2	0.411181922432434\\
3	0.336230634948392\\
4	0.285909187566956\\
5	0.250848424312611\\
6	0.225677370186747\\
7	0.207071879780347\\
8	0.193024950494336\\
9	0.182034690917533\\
10	0.173210575716379\\
11	0.165964820741953\\
12	0.159974189004454\\
13	0.154949987706281\\
14	0.150663646682115\\
15	0.146931601207919\\
16	0.14364823891417\\
17	0.140778891219839\\
18	0.138244585617886\\
19	0.135965938321326\\
20	0.133922402714743\\
21	0.132078098774368\\
22	0.130385315994084\\
23	0.128811261061764\\
24	0.12737712287066\\
25	0.126047700033151\\
26	0.124826476026924\\
27	0.123699879241727\\
28	0.122641362199647\\
29	0.121660589070976\\
30	0.120747970499842\\
31	0.119895852476791\\
32	0.119097912012134\\
33	0.118348844311852\\
34	0.11764412627324\\
35	0.116979802270544\\
36	0.116352428776412\\
37	0.11575896261146\\
38	0.115196710588444\\
39	0.114653519389773\\
40	0.114139172746211\\
41	0.113651267887269\\
42	0.113187660910952\\
43	0.112746488195899\\
44	0.112326098028667\\
45	0.111925022534881\\
46	0.111541951559752\\
47	0.111175721874792\\
48	0.110825262370099\\
49	0.11048959493708\\
50	0.110167831309651\\
};
\addlegendentry{$d=10000$}

\addplot [color=mycolor2, dashed, line width=2.0pt, forget plot]
  table[row sep=crcr]{%
1	0.592744290851037\\
2	0.514692310069117\\
3	0.454395577397023\\
4	0.408044448942296\\
5	0.372032286831228\\
6	0.3433711509154\\
7	0.320056385902191\\
8	0.300736660719441\\
9	0.284504047869303\\
10	0.270677528887787\\
11	0.258680320909031\\
12	0.248223810077955\\
13	0.238956640890096\\
14	0.23080859879948\\
15	0.223553887847144\\
16	0.217064643170539\\
17	0.211283130242668\\
18	0.206091728320848\\
19	0.201399747841078\\
20	0.19711515200028\\
21	0.19316457440841\\
22	0.189549549218958\\
23	0.186191650138244\\
24	0.183061742223721\\
25	0.180138908106123\\
26	0.177417840923639\\
27	0.174875487453237\\
28	0.172466012591401\\
29	0.170193853663297\\
30	0.168074536456377\\
31	0.166062820856482\\
32	0.164150883702258\\
33	0.162320290544646\\
34	0.160592618784859\\
35	0.158967751959812\\
36	0.157411177280347\\
37	0.155907404170279\\
38	0.154478141722605\\
39	0.153104010632015\\
40	0.151794912883406\\
41	0.150543194405381\\
42	0.14935558728615\\
43	0.148225381848377\\
44	0.147138101865305\\
45	0.146100475314228\\
46	0.145098920245915\\
47	0.144141168881107\\
48	0.143214761157472\\
49	0.142327249780348\\
50	0.14147557250229\\
};
\addplot [color=mycolor3, line width=2.0pt]
  table[row sep=crcr]{%
1	0.523938303739369\\
2	0.413661974483944\\
3	0.342238130092415\\
4	0.294795714182126\\
5	0.261748367983919\\
6	0.237654466604839\\
7	0.219627548982827\\
8	0.205856577326449\\
9	0.194933114070418\\
10	0.186106051847496\\
11	0.178773974363448\\
12	0.172631662517598\\
13	0.167308535709203\\
14	0.162703951250181\\
15	0.15869083744488\\
16	0.155114133120274\\
17	0.151922411963545\\
18	0.149099961029333\\
19	0.146579572879911\\
20	0.144287166891609\\
21	0.142190344742085\\
22	0.140284852879499\\
23	0.138541636383998\\
24	0.136938337646751\\
25	0.13542238244027\\
26	0.134006661994568\\
27	0.13266663249993\\
28	0.131427933117563\\
29	0.1302629211663\\
30	0.12914968545454\\
31	0.128114710320854\\
32	0.127148702606829\\
33	0.126243208342098\\
34	0.125378515754505\\
35	0.124552615822861\\
36	0.123775134179281\\
37	0.123041127140851\\
38	0.122346212548562\\
39	0.121686826070496\\
40	0.12105992646995\\
41	0.120462826822073\\
42	0.11989318840136\\
43	0.119348971097634\\
44	0.118828367920628\\
45	0.118329763757221\\
46	0.117851698486358\\
47	0.117392858894044\\
48	0.116952070104422\\
49	0.116519005381435\\
50	0.116094842807968\\
};
\addlegendentry{$d=20000$}

\addplot [color=mycolor3, dashed, line width=2.0pt, forget plot]
  table[row sep=crcr]{%
1	0.607693942748279\\
2	0.540173427192307\\
3	0.487187528191279\\
4	0.445677613055711\\
5	0.412940261560828\\
6	0.386603446931198\\
7	0.365136377631804\\
8	0.347131316863069\\
9	0.331856965488235\\
10	0.318679528069984\\
11	0.307135374466134\\
12	0.296870781043769\\
13	0.287701342364328\\
14	0.279411618409747\\
15	0.271951964051805\\
16	0.265131553435368\\
17	0.258918992956187\\
18	0.253198371739131\\
19	0.247869555053466\\
20	0.242931557616829\\
21	0.238336676437469\\
22	0.234048308266472\\
23	0.230036683518494\\
24	0.226268983594411\\
25	0.222757404448083\\
26	0.219411263793598\\
27	0.216222895807368\\
28	0.213184218487953\\
29	0.210309201849026\\
30	0.207610569565487\\
31	0.205042646994125\\
32	0.202621109492318\\
33	0.200330979048828\\
34	0.198147800867578\\
35	0.196063696540534\\
36	0.194071842122749\\
37	0.192165152020351\\
38	0.190326222187297\\
39	0.188563988949113\\
40	0.186861842302253\\
41	0.185228202148715\\
42	0.183666963377628\\
43	0.182172471599168\\
44	0.180701449009028\\
45	0.179284619870184\\
46	0.177927623831819\\
47	0.176625295207348\\
48	0.175365203375894\\
49	0.174144722629308\\
50	0.172970717981452\\
};
\end{axis}

\end{tikzpicture}
\vspace{-15pt}
 \caption{\label{fig:classic_compare} Performance of Polyak's step size (dashed) and Sparse Polyak (solid) on logistic regression problems with increasing $d$ and $n.$ The quantities $s,$ $s^{*},$ $\bar{\kappa}$ and $\frac{\log (d)}{n}$ remain constant. With Polyak's step-size the performance degrades as $d$ increases whereas Sparse Polyak exhibits rate invariance, i.e. the number of iterations to achieve (near) optimal statistical precision does not change.}
\end{wrapfigure}

\subsection{Related works}

Over the past three decades, numerous methods have been proposed to solve the problem in \eqref{eq:l0}, including Matching Pursuit \cite{mallat1993matching}, Orthogonal Matching Pursuit \cite{pati1993orthogonal}, and CoSaMP \cite{needell2009cosamp}.  Iterative Hard Thresholding (IHT) was first introduced in \cite{blumensath2009iterative} with many variants proposed since. While initial convergence guarantees for IHT required the Restricted Isometry Property (RIP)-a condition often too stringent in practice-\cite{jain} extended IHT's convergence guarantees to problems fulfilling the restricted strong convexity (RSC) and RSS; establishing linear convergence to near optimal statistical precision. Observe that in the M-estimation context, convergence to arbitrary precision is unnecessary and convergence to the best achievable statistical precision is preferred.
 However, for the results in \cite{jain} to hold, if the optimal parameter to recover is $s^{*}-$sparse, $s \geq \mathcal{O}\left(\bar{\kappa}^2 s^{*} \right)$ is required, where $\bar{\kappa}$ is the restricted condition number. \cite{khanna2018} proposed an accelerated version of IHT which was extended to the stochastic setting in \cite{zhou2018}. \cite{zhou2018} establishes that with a sufficiently large mini-batch size, acceleration can be achieved and the faster rate requires only $s \geq \mathcal{O}(\bar{\kappa} s^*).$  \cite{axiotis2022} proposed a variant of IHT with an adaptively chosen weighted $\ell_2$ penalty for which only $s \geq \mathcal{O}(\bar{\kappa}s^{*})$ is required. They further establish that for IHT $s \geq \mathcal{O}(\bar{\kappa} s^{*})$ is in fact a necessary condition to achieve near optimal statistical precision. \cite{li2016} and  \cite{shen2018} introduced variants of IHT that incorporate variance reduction. \cite{shen2017} and \cite{yuan2018} propose Partial Hard Thresholding and Gradient Hard Thresholding pursuit respectively, with a focus on support recovery under high SNR assumptions. \cite{yuan} establishes generalization bounds for solutions found via IHT. Further, \cite{zhang2025} establishes that IHT, for a range of $\bar{\kappa}$, finds solutions that can be shown to achieve the oracle estimation rate under a high SNR condition. 
 
 With no exception, the discussed works establishing convergence under the RSS require knowledge of the RSS constant $\bar{L}.$ Knowledge of $\bar{L}$ is crucial for the requirement $s \geq \mathcal{O}\left(\bar{\kappa}^2 s^{*} \right)$ being sufficient for convergence to optimal statistical precision. More generally, convergence can be established whenever $s \geq \mathcal{O}\left(\frac{s^{*}}{\bar{\mu}^2 \gamma^2} \right)$, with $\gamma \leq \frac{1}{\bar{L}}$. Consequently, overestimating $\bar{L}$ forces a smaller step size $\gamma$, which in turn slows down convergence and leads to denser solutions.

Several recent works have proposed adaptive step-size schemes based on estimating the local Lipschitz constant \cite{malitsky2020adaptive, malitsky2024adaptive, latafat2024adaptive}. With the goal to jointly exploit the function's local regularity and the algorithm's trajectory, \cite{mishkin2024directional} derive new convergence results for gradient descent as a function of the objective function's local Lipschitz smoothness and strong convexity parameters. They additionally establish that Polyak's step-size obtains fast and path-dependent rates. However, their results do not naturally extend to constrained problems, particularly non-convex ones like \eqref{eq:l0}. 

Polyak's original step-size rule, first proposed in \cite{polyak}, has gathered renewed attention in the machine learning community \cite{ren2022towards, hazan2019revisiting, loizou2021stochastic, wang2023generalized, zamani2024exact}, but it remains ill-suited for non-convex constrained settings such as \eqref{eq:l0} unless additional assumptions are imposed. Some efforts have been made to adapt Polyak's step size to constrained problems. For instance, \cite{cheng2012active} addresses box constraints, while \cite{devanathan2024polyak} consider convex constraints.

To this day, except for \cite{comp}, and works that employ inexact line-search strategies \cite{xiao2013proximal, wang2014optimal}, there has been no study of adaptive step-size schemes in the high-dimensional context. The work in \cite{comp} handles \eqref{eq:l0} in the stochastic setting and proposes the use of Polyak's step-size with no modification. The results in \cite{comp} are limited even with no stochasticity, as they imply bounds on the restricted condition number, i.e. $\bar{\kappa} \leq \frac{\eta}{\eta-1}$ where $\eta = (\sqrt{5}+1)^2/4$. Further, it is worth mentioning that the notion of RSS assumed in \cite{comp} is more restrictive than that assumed in the present paper and \cite{jain}. Further, as shown in Fig.~\ref{fig:classic_compare} Polyak's step-size with no modification presents a performance that degrades as the size of the problem increases even if $\bar{\kappa}$ and the optimal statistical precision (of the order of $\mathcal{O}(\frac{s^*\log d}{n})$ for this particular statistical model) remain constant. This effect is highly undesirable in the high-dimensional setting and as shown in the present work, can be avoided with a suitable modification of Polyak's step-size.

\subsection{Major contributions}
Our main contribution is the first adaptive step-size rule that performs well in high-dimensions and preserves the rate invariance property. To develop this scheme, we address question \textbf{(i)} posed in the introduction. We observe that adaptive step-size rules that estimate the Lipschitz smoothness constant do not necessarily work well in high-dimensions. This is because, in many common statistical models, the Lipschitz smoothness constant scales as $\mathcal{O}(d)$ with high probability. Consequently, we answer questions \textbf{(i)} through \textbf{(iii)} in the negative, by demonstrating empirically (c.f. Fig.~\ref{fig:classic_compare}) that estimating the Lipschitz smoothness constant via Polyak's step-size (dashed line) does not yield rate invariance as $d$ grows even if $\frac{\log (d)}{n}$ is held constant.

To overcome this limitation, we design an adaptive step-size rule that estimates the restricted Lipschitz smoothness constant instead. With this modification, we answer questions \textbf{(ii)} and \textbf{(iii)} in the affirmative. This is captured in Theorem~\ref{thm:1_f} and its Corollaries, which particularize the results of Theorem~\ref{thm:1_f} to relevant statistical models. In Sections~\ref{sec:main}, \ref{sec:numerical}, and in Appendix~\ref{sec:real_data} we provide theoretical and empirical evidence, both on synthetic and real data, that our proposed method outperforms Polyak's step-size for high-dimensional M-estimation tasks and achieves rate invariance. Moreover, we theoretically and empirically show that Sparse Polyak converges to optimal statistical precision at least as fast as IHT with the optimal fixed step-size $\gamma = \mathcal{O}(1/\bar{L}).$ We also establish sufficient conditions under which we can guarantee support recovery, and  particularize our results for specific statistical learning models in Section~\ref{sec:stat}. 

Our guarantees are derived under standard assumptions, identical to those in \cite{jain, shen2017, yuan2018}. To guarantee convergence to optimal statistical precision we require the knowledge of $f(\theta^{*})$ where $\theta^*$ denotes the true parameter. While $f(\theta^*)$ is known to be of the order $\mathcal{O}\left(\frac{\log d}{n}\right),$ its exact value is typically unknown. Consequently, we provide in Appendix~\ref{sec:ada} a double loop method that estimates a surrogate to $f(\theta^*)$ and allows convergence to optimal statistical precision as long as lower bound to $f(\theta^{*})$ is known. Observe that in our context $f(\theta^{*}) \geq 0$ making 0 a valid lower bound.

In addition, we prove linear convergence for statistical models that do not satisfy the regularity conditions in \cite{jain, axiotis2022, yuan} but instead fulfill the weaker condition in \cite{loh}. This extends the results in \cite{jain,yuan} to cover additional GLMs with an adaptive step-size rule. We provide these additional results in Appendix~\ref{sec:glm}.

Our proof technique may be of independent interest as it provides a clear pathway to establishing convergence of the IHT algorithm. We believe this is key in extending theoretical guarantees to adaptive step-size schemes to solve \eqref{eq:l0}. We provide a sketch of the proof of our main results in Section~\ref{sec:sketch} and the formal proof in Appendix~\ref{sec:proofs}.

Finally, although our analysis applies only to Polyak's step-size, we conjecture that other adaptive step-size rules such as \cite{barzilai1988two}, \cite{zhou2025adabb}, \cite{malitsky2020adaptive} may also suffer from similar performance degradation in high dimensions, and would therefore require analogous re-engineering to be effective.

\paragraph{Notation.} Throughout this paper, we adopt the following notations. For vectors \( x,  \in \mathbb{R}^{d} \), we denote by $x_i$ the $i^{\text{th}}$ element.  We denote the \(\ell_{\infty}\)-norm of \( x \) as \( \| x \|_{\infty} \), the Euclidean norm as \( \| x \| \), the \(\ell_1\)-norm as \( \| x \|_{1} \), and the \(\ell_0\)-norm as \( \| x \|_{0} \). Recall $\|x\|_0 = |\{i : x_i \neq 0\}|.$  Also, we let \(\left| x \right|_{\min}\) denote the minimal entry of \(x\) in the sense of absolute value. The inner product between two vectors is denoted as \( \langle x, y \rangle \). Matrices such as  \( X \in \mathbb{R}^{d \times d} \), are capitalized. For any matrix \( \Sigma \), we denote its largest singular value by \( \sigma_{\max}(\Sigma) \) and its smallest singular value by \( \sigma_{\min}(\Sigma) \). Similarly, if $\Sigma \in \mathbb{R}^{d \times d}$ diagonalizes, we use \( \lambda_{\max}(\Sigma) \) and \( \lambda_{\min}(\Sigma) \) to denote the largest and smallest eigenvalues of \( \Sigma \) respectively.
The Frobenius norm of \( X \) is given by \( \| X \|_{F} \), and the nuclear norm by $\|X\|_{*}.$

\section{Setup and background}
We make the following assumptions regarding the objective function $f $.
\begin{assumption}[RSC \cite{fast}]\label{asp:rscvx}
    The objective function $f$ is $(\mu,\tau)-$restricted strongly convex in $\mathbb{R}^d,$ i.e.
    \begin{align}\label{eq:rsc}
        \frac{\mu}{2}\|\theta_1-\theta_2\|^2 - \frac{\tau}{2}\|\theta_1-\theta_2\|^2_1 \leq f(\theta_1) - f(\theta_2) - \langle \nabla f(\theta_2), \theta_1- \theta_2 \rangle,\, \forall\, \theta_1,\,\theta_2 \, \in \mathbb{R}^d.
    \end{align}
\end{assumption}

\begin{assumption}[RSS\cite{fast}]\label{asp:rsmooth}
    The objective function $f$ is $(L,\tau)-$restricted smooth in $\mathbb{R}^d,$ i.e.
    \begin{align*}
        f(\theta_1) - f(\theta_2) - \langle \nabla f(\theta_2), \theta_1- \theta_2 \rangle \leq \frac{L}{2}\|\theta_1-\theta_2\|^2 + \frac{\tau}{2}\|\theta_1-\theta_2\|^2_1, \forall\, \theta_1,\,\theta_2 \, \in \mathbb{R}^d.
    \end{align*}
\end{assumption}

Assumptions~\ref{asp:rscvx} and~\ref{asp:rsmooth} extend the classical notions of strong convexity and $L-$Lipschitz smoothness. These assumptions reduce to their classical counterparts when $\tau$ is sufficiently small and the direction $\theta_1-\theta_2$ is appropriately sparse. Observe that when $\theta_1 - \theta_2$ is dense and $f$ is convex, Assumption~\ref{asp:rscvx} becomes vacuous and the upper bound in Assumption~\ref{asp:rsmooth} scales linearly with the problem dimension $d.$ This highlights that the RSC and RSS depend not only on the magnitude of the direction $\theta_1 - \theta_2$ but also its structure.

 In high-dimensional statistical learning settings where $\frac{d}{n} \to \infty,$ standard strong convexity and smoothness assumptions fail to hold. However
, many important problems still satisfy variants of the RSC and RSS, with both $\mu$ and $L$ remaining dimension-independent and with $\tau$ exhibiting only moderate dependence on $d,$ e.g., \cite{jain,fast}. We leverage this in Section~\ref{sec:stat} to establish fast computational and near optimal statistical guarantees for a variety of high-dimensional statistical learning problems.

We further highlight that our results can be generalized by adopting a weaker RSC condition, where we assume that \eqref{eq:rsc} holds only for pairs $\theta,\,\theta^{*}$ satisfying \(\| \theta - \theta^* \| \leq 1\), where $\theta$ denotes the ground truth, rather than requiring it to hold globally. This relaxation broadens the applicability of our approach, allowing it to accommodate a wider class of functions. Notably, for some generalized linear models (GLMs), the loss function does not necessarily satisfy \eqref{eq:rsc} without imposing additional constraints. We provide a detailed discussion of these results in Appendix~\ref{sec:glm}.

\section{Main Result}
\label{sec:main}
In this section we present our main theoretical result, which establishes convergence guarantees for Sparse Polyak (c.f. Algorithm~\ref{algo:iht}). In this section, we focus on the deterministic setting of $f$, while the statistical case will be addressed in Section~\ref{sec:stat}. Note that IHT corresponds to projected gradient descent when applied to \eqref{eq:l0}. The projection onto the $\ell_0$ is given by the hard thresholding operator, already discussed in the introduction and formally defined as follows.
\begin{definition}[Hard Thresholding Operator]
    For any $s>0 $, and $z \in \mathbb{R}^{d} $, we define $\hs(z) $ as the projection of $z $ on $B_{0}(s) := \{\theta \in \mathbb{R}^{d} \mid \| \theta \|_{0} \leq s\}.$ i.e.,
    \[
     \hs(z) = \argmin_{\theta \in B_{0}(s)} \| \theta - z \|_{2},
    \]
where ties are broken lexicographically.
\end{definition}

\begin{algorithm}
\caption{Iterative Hard Thresholding (IHT) with Polyak Step-Size}
\label{algo:iht}
\begin{algorithmic}[1]
\State \textbf{Input:} Function $f$, target function value $\widehat{f},$ sparsity parameter \( s \),  number of iterations $T$
\State \textbf{Initialize:} \( \theta_0 \in \mathbb{R}^d, \) with  \(\|\theta_0\|_0 \leq s\)
\For{t = 0 to \( T -1\)}
    \State Compute step-size \(\gamma_t = \frac{\max\{f(\theta_t) - \widehat{f},0\}}{5 \| \hs(\nabla f(\theta_{t})) \|^2}\)
    \State Update: \( \theta_{t+1} = \mathrm{HT}_{s}\left( \theta_t - \gamma_t \nabla f(\theta_t) \right) \)
\EndFor
\State \textbf{Output:} \( \theta_T \)
\end{algorithmic}
\end{algorithm}

 The adaptive step-size rule in Algorithm~\ref{algo:iht} differs from the classical Polyak rule by replacing $\|\nabla f(\theta_t)\|^2$ with $\|\mathrm{HT}_s(\nabla f(\theta_t))\|^2$. This approach contrasts with the low-dimensional case and with the work in \cite{comp}. Observe that even if the current iterate $\theta_t$ is sparse, sparsity of $\nabla f(\theta_t)$ can not be guaranteed. In fact, the worst-case relationship $\|\nabla f(\theta_t)\| \leq \sqrt{\frac{d}{s}}\|\mathrm{HT}_s(\nabla f(\theta_t))\|,$ which holds with equality for a vector in which all coordinates are identical, may hold. As a result, using the full gradient norm can lead to using overly conservative step-sizes, slowing down convergence dramatically as $d$ increases. Unless strong additional conditions are imposed on the problem, convergence may be even be jeopardized (see discussion in Section~\ref{sec:sketch} for more details).

Before providing our main result we introduce some notation. Consider fixed values \( s \geq s^{*} > 0 \), and let \( f^{*} \triangleq \min_{\theta:\|\theta\|_0 \leq s^{*}} f(\theta) \). Assume the chosen target function value satisfies \( \widehat{f} \geq f^{*} \). 
Let $\bar{L} = L + 3 \tau s$, $\bar{\mu} = \mu - 3\tau s$, and $\bar{\kappa} = \bar{L}/\bar{\mu}$ denote the restricted condition number. 
\begin{theorem}\label{thm:1_f}
    Let \(\{\theta_t\}_{t \geq 1}\) denote the sequence of iterates generated by Algorithm~\ref{algo:iht}. Suppose the objective function \(f\) satisfies the RSC and RSS in Assumptions~\ref{asp:rscvx} and~\ref{asp:rsmooth}, respectively. Let \(\widehat{\theta}\) be any \(s^{*}\)-sparse vector such that \(f(\widehat{\theta}) = \widehat{f}\), and assume \(\bar{\mu} > 0\) and \(s \geq (240 \bar{\kappa})^2 s^{*}\).
    Then, for any iterate \(\theta_t\) such that \(\|\theta_t - \widehat{\theta}\|^2 \geq \frac{36\|\mathrm{HT}_s(\nabla f(\widehat{\theta}))\|^2}{\bar{\mu}^2}\)  we can guarantee
    \[
    \|\theta_{t+1} - \widehat{\theta}\|^2 \leq \left(1 - \frac{1}{80 \bar{\kappa}}\right) \|\theta_t - \widehat{\theta}\|^2.
    \]
    Moreover, let \(t_0 \geq 0\) be the first iteration for which \(\|\theta_{t_0} - \widehat{\theta}\|^2 < \frac{36\|\mathrm{HT}_s(\nabla f(\widehat{\theta}))\|^2}{\bar{\mu}^2}\). Then, for all \(t \geq t_0\),  
    \(
    \|\theta_t - \widehat{\theta}\|^2 \leq \left(1 + \frac{1}{80 \bar{\kappa}}\right) \frac{36\|\mathrm{HT}_s(\nabla f(\widehat{\theta}))\|^2}{\bar{\mu}^2}.
    \) 
\end{theorem}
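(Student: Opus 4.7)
The plan is to bound $\|\theta_{t+1} - \widehat{\theta}\|^2$ by combining a hard-thresholding projection inequality with the RSC/RSS assumptions and the defining identity of the Polyak step size. First, since $s \geq s^*$, the target $\widehat{\theta}$ is itself $s$-sparse, so the optimality of $\mathrm{HT}_s$ together with the standard expansion inequality from the IHT literature yields a bound of the form $\|\theta_{t+1} - \widehat{\theta}\|^2 \leq (1 + \alpha)\,\|\theta_t - \gamma_t\nabla f(\theta_t) - \widehat{\theta}\|_I^2$, where $I = \mathrm{supp}(\theta_{t+1})\cup\mathrm{supp}(\theta_t)\cup\mathrm{supp}(\widehat{\theta})$ has size at most $2s+s^*$ and $\alpha = O(\sqrt{s^*/s})$. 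The hypothesis $s \geq (240\bar{\kappa})^2 s^*$ is tuned precisely so that $\alpha$ is of order $1/\bar{\kappa}$ and can later be absorbed into the contraction rate.

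Next I would expand the restricted quadratic on the right-hand side. The cross term $-2\gamma_t\langle\nabla f(\theta_t),\theta_t-\widehat{\theta}\rangle$ I would split into two contributions: Assumption~\ref{asp:rscvx} applied symmetrically to $\theta_t,\widehat{\theta}$, together with $\|\theta_t-\widehat{\theta}\|_1^2 \leq (s+s^*)\|\theta_t-\widehat{\theta}\|^2$, yields restricted monotonicity $\langle\nabla f(\theta_t)-\nabla f(\widehat{\theta}),\theta_t-\widehat{\theta}\rangle\geq \bar{\mu}\|\theta_t-\widehat{\theta}\|^2$, while sparse Cauchy-Schwarz bounds $|\langle\nabla f(\widehat{\theta}),\theta_t-\widehat{\theta}\rangle|$ by $\sqrt{|I|/s}\,\|\mathrm{HT}_s(\nabla f(\widehat{\theta}))\|\,\|\theta_t-\widehat{\theta}\|$. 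The gradient-square term $\gamma_t^2\|\nabla f(\theta_t)\|_I^2$ I would reduce to $3\gamma_t^2\|\mathrm{HT}_s(\nabla f(\theta_t))\|^2$ via the elementary inequality $\|\mathrm{HT}_{|I|}(v)\|^2 \leq (|I|/s)\|\mathrm{HT}_s(v)\|^2$, after which the very definition of $\gamma_t$ collapses it to $(3/5)\gamma_t(f(\theta_t)-\widehat{f})$. The convex lower bound $\langle\nabla f(\theta_t),\theta_t-\widehat{\theta}\rangle\geq f(\theta_t)-\widehat{f}$ contributes a cancelling $-2\gamma_t(f(\theta_t)-\widehat{f})$, and taking a convex combination of this with the RSC bound produces a recursion of the schematic form $\|\theta_{t+1}-\widehat{\theta}\|^2 \leq (1+\alpha)\bigl[(1-\gamma_t\bar{\mu})\|\theta_t-\widehat{\theta}\|^2 + C\gamma_t\|\mathrm{HT}_s(\nabla f(\widehat{\theta}))\|\|\theta_t-\widehat{\theta}\|\bigr]$ whenever $f(\theta_t)\geq\widehat{f}$; otherwise $\gamma_t=0$ and $\theta_{t+1}=\theta_t$.

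The main technical obstacle is extracting a rate of order $1/\bar{\kappa}$ from this recursion, which forces $\gamma_t$ to be bounded below in units of $1/\bar{L}$. My plan is to establish a sparse analogue of the classical inequality $\|\nabla f\|^2 \leq 2L(f-f^*)$ by choosing the $s$-sparse descent direction $\theta' = \theta_t - \bar{L}^{-1}\mathrm{HT}_s(\nabla f(\theta_t))$ and applying Assumption~\ref{asp:rsmooth} to obtain $\|\mathrm{HT}_s(\nabla f(\theta_t))\|^2 \leq 2\bar{L}(f(\theta_t) - f(\theta'))$. The subtlety is that $\theta'$ is $2s$-sparse rather than $s^*$-sparse, so $f(\theta')$ need not exceed $\widehat{f}$; I would handle this with a case split, using $f(\theta')\geq \widehat{f}$ to yield $\gamma_t\geq 1/(10\bar{L})$ directly, and $f(\theta')<\widehat{f}$ to show that the target-gap $f(\theta_t)-\widehat{f}$ is already small enough relative to $\bar{\mu}\|\theta_t-\widehat{\theta}\|^2$ that the same recursion still contracts via a different combination of terms.

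Finally, both the radius $36\|\mathrm{HT}_s(\nabla f(\widehat{\theta}))\|^2/\bar{\mu}^2$ and the two conclusions of the theorem arise from an AM-GM split on the cross-noise term: outside the ball $\|\theta_t-\widehat{\theta}\|\geq 6\|\mathrm{HT}_s(\nabla f(\widehat{\theta}))\|/\bar{\mu}$, the noise $C\gamma_t\|\mathrm{HT}_s(\nabla f(\widehat{\theta}))\|\|\theta_t-\widehat{\theta}\|$ can be absorbed into half of the contraction term $\gamma_t\bar{\mu}\|\theta_t-\widehat{\theta}\|^2$, yielding the stated factor $1-1/(80\bar{\kappa})$; once inside the ball, rerunning the recursion with the worst-case sign of the noise shows that $\|\theta_t-\widehat{\theta}\|^2$ is trapped within a factor $(1+1/(80\bar{\kappa}))$ of $36\|\mathrm{HT}_s(\nabla f(\widehat{\theta}))\|^2/\bar{\mu}^2$ for all $t\geq t_0$.
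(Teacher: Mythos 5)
Your overall architecture matches the paper's: control the hard-thresholding expansion, expand the restricted gradient step, use the Polyak identity to cancel the gradient-square term, and show that outside the noise ball the step size is bounded below by a constant multiple of $1/\bar{L}$ so the contraction $(1-\bar{\mu}\gamma_t)$ beats the expansion. The decisive step, however — the lower bound $\gamma_t \gtrsim 1/\bar{L}$, which is the paper's Lemma~\ref{lemma:step_size} and the crux of the whole theorem — is where your plan has a genuine gap. Your route via $\theta' = \theta_t - \bar{L}^{-1}\hs(\nabla f(\theta_t))$ gives $\|\hs(\nabla f(\theta_t))\|^2 \leq 2\bar{L}\,(f(\theta_t)-f(\theta'))$, which only helps when $f(\theta')\geq \widehat{f}$. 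The sub-case $f(\theta')<\widehat{f}$ is not exotic ($\widehat{f}$ is only required to satisfy $\widehat{f}\geq f^*$, and $\theta'$ is $2s$-sparse, so a single restricted gradient step from a far-away $\theta_t$ can land below $\widehat{f}$), and your fallback for it does not work: outside the ball, RSC forces $f(\theta_t)-\widehat{f} \geq c\,\bar{\mu}\|\theta_t-\widehat{\theta}\|^2$, so the premise "the target-gap is small relative to $\bar{\mu}\|\theta_t-\widehat{\theta}\|^2$" is impossible there; and if $\gamma_t$ is not bounded below, the only negative term in your recursion, $-\bar{\mu}\gamma_t\|\theta_t-\widehat{\theta}\|^2$, cannot offset the $(1+\alpha)$ prefactor, so no "different combination of terms" yields the $1-\tfrac{1}{80\bar{\kappa}}$ rate. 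What is actually needed — and what the paper proves — is the bound anchored at $\widehat{\theta}$ rather than at $\theta'$: apply the RSS descent argument to the shifted function $\phi(\cdot)=f(\cdot)-\langle\nabla f(\widehat{\theta}),\cdot-\widehat{\theta}\rangle$ to get $\|\hs(\nabla f(\theta_t)-\nabla f(\widehat{\theta}))\|^2 \leq 2\bar{L}\bigl(f(\theta_t)-\widehat{f}-\langle\nabla f(\widehat{\theta}),\theta_t-\widehat{\theta}\rangle\bigr)$ (Lemma~\ref{lemma:smooth}), then use RSC together with $\|\theta_t-\widehat{\theta}\|\geq 6\|\hs(\nabla f(\widehat{\theta}))\|/\bar{\mu}$ to absorb both $\langle\nabla f(\widehat{\theta}),\theta_t-\widehat{\theta}\rangle$ and $\|\hs(\nabla f(\widehat{\theta}))\|^2$, which is exactly Lemma~\ref{lemma:step_size} and yields $\gamma_t\geq \tfrac{1}{40\bar{L}}$ unconditionally outside the ball.

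Two smaller points. First, with the triple union $I=\mathrm{supp}(\theta_{t+1})\cup\mathrm{supp}(\theta_t)\cup\mathrm{supp}(\widehat{\theta})$, the Jain-style projection lemma (Lemma~\ref{lemma:jain}) gives ratio $\tfrac{|I|-s}{|I|-s^*}\approx \tfrac12$, not $O(s^*/s)$; to get $\alpha=O(\sqrt{s^*/s})$ with that $I$ you must invoke a dimension-free tight hard-thresholding bound such as \cite{shen2018}, whereas the paper keeps the smaller set $\mathcal{S}_{t+1}\cup\mathrm{supp}(\widehat{\theta})$ and separately controls the cross term $\langle\theta_t-\widehat{\theta},[\nabla f(\theta_t)]_{\mathcal{S}_t\setminus\widehat{\mathcal{S}}_{t+1}}\rangle$ via the thresholding selection property. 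Second, your dismissal of the case $f(\theta_t)\leq\widehat{f}$ ("$\gamma_t=0$ and $\theta_{t+1}=\theta_t$") must be reconciled with the theorem's claim of contraction outside the ball; the paper does this by noting that RSC forces any such iterate to already lie inside the ball, so the contraction claim is never tested there.
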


Theorem~\ref{thm:1_f} implies linear convergence at a rate scaling with $\bar{\kappa}^{-1}$ up to  precision $\mathcal{O}\left(\frac{\|\mathrm{HT}_s(\nabla f(\widehat{\theta}))\|^2}{\bar{\mu}^2} \right).$ This result is near equivalent to that in Theorem 3 in \cite{jain}, where the RSS constant is assumed to be known, up to a constant factor. Thus, we successfully answer in the affirmative question \textbf{(ii)} posed in Section~\ref{sec:intro}.  Further, observe that, if  $\bar{L},$ $\bar{\mu}$ and $\|\mathrm{HT}_s(\nabla f(\widehat{\theta}))\|^2$ remain constant as the size of the problem grows, the rate remains unchanged and the achievable precision does so too. This implies that for a variety of statistical models the rate and final precision will remain invariant as the problem size increases, as long as the aforementioned quantities do not change. Thus, we also answer in the affirmative question \textbf{(iii)} posed in Section~\ref{sec:intro}. We particularize the result to specific statistical models and provide further discussion in Section~\ref{sec:stat} and Appendix~\ref{sec:stat_guarantees}.

Here, $\widehat{f}$ is a user-defined target value that reflects the desired level of optimization, which can be set above or equal to the statistical accuracy of the problem. 
Such a relaxation is natural in learning problems, as it is often counterproductive to optimize to full precision. The continuity and restricted strong convexity of $f$ imply that an $s^{*}$-sparse vector $\widehat{\theta}$ such that $f(\widehat{\theta}) = \widehat{f}$ always exists.

The additional factor $ \frac{1}{80\bar{\kappa}}$ in the final precision (c.f. Theorem~\ref{thm:1_f} when $t \geq t_0$) stems from the expansiveness of the hard thresholding operator. The removal of this factor and support recovery guarantees can be achieved under the signal to noise ratio (SNR) condition (c.f.\eqref{eq:snr}) and are provided in the following Corollary. Such a condition is widely used in hard thresholding and support recovery studies, as seen in \cite[Prop 2]{phd} \cite{snr1,snr2,snr3}.

\begin{corollary}\label{coro:snr}  
    Under the assumptions stated in Theorem~\ref{thm:1_f}, if, further, the SNR condition: 
    \begin{equation}\label{eq:snr}  
    | \widehat{\theta} |_{\min} \geq  \frac{7\|\hs(\nabla f(\widehat{\theta}))\|}{\bar{\mu}}  
    \end{equation}  
    holds,  for any \( t \geq t_0 \), where $t_0$ is defined in Theorem~\ref{thm:1_f}, the support of \( \theta_t \) contains that of \( \widehat{\theta} \), and the sequence \( \| \theta_t - \widehat{\theta} \|^{2} \) is non-increasing and upper bounded by \( \frac{36\|\hs(\nabla f(\widehat{\theta}))\|^{2}}{\bar{\mu}^{2}} \).  
\end{corollary}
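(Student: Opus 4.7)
The plan is to upgrade the conclusion of Theorem~\ref{thm:1_f} in two complementary ways under the SNR hypothesis \eqref{eq:snr}: \textbf{(a)} recover $\mathrm{supp}(\widehat{\theta})$ starting from iteration $t_0$, and \textbf{(b)} eliminate the $(1+1/(80\bar{\kappa}))$ slack in the distance bound, upgrading it to monotonicity. Both improvements stem from a single mechanism: once $\mathrm{supp}(\widehat{\theta}) \subseteq \mathrm{supp}(\theta_t)$ is in force, the hard thresholding operator acts as a pure coordinate restriction on the deviation $z_t - \widehat{\theta}$ and is therefore non-expansive toward $\widehat{\theta}$, rather than only $2$-expansive as in the generic bound underlying Theorem~\ref{thm:1_f}.

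The base case $t = t_0$ follows directly from the SNR condition: Theorem~\ref{thm:1_f} yields $\|\theta_{t_0} - \widehat{\theta}\|^2 < 36\|\hs(\nabla f(\widehat{\theta}))\|^2 / \bar{\mu}^2$, and any coordinate $i \in \mathrm{supp}(\widehat{\theta}) \setminus \mathrm{supp}(\theta_{t_0})$ would contribute $|\widehat{\theta}_i|^2 \geq |\widehat{\theta}|_{\min}^2 \geq 49\|\hs(\nabla f(\widehat{\theta}))\|^2 / \bar{\mu}^2$ to this squared distance by \eqref{eq:snr}, a contradiction because $49 > 36$. So $\mathrm{supp}(\widehat{\theta}) \subseteq \mathrm{supp}(\theta_{t_0})$.

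I would then proceed by induction on $t \geq t_0$ with the joint invariant $\mathrm{supp}(\widehat{\theta}) \subseteq \mathrm{supp}(\theta_t)$ and $\|\theta_t - \widehat{\theta}\|^2 \leq 36\|\hs(\nabla f(\widehat{\theta}))\|^2/\bar{\mu}^2$. Writing $z_t := \theta_t - \gamma_t\nabla f(\theta_t)$ and $\Omega := \mathrm{supp}(\theta_{t+1})$, whenever $\mathrm{supp}(\widehat{\theta}) \subseteq \Omega$ one has $\widehat{\theta} = \widehat{\theta}\,\mathbf{1}_\Omega$, so that $\|\theta_{t+1} - \widehat{\theta}\|^2 = \|(z_t - \widehat{\theta})\mathbf{1}_\Omega\|^2 \leq \|z_t - \widehat{\theta}\|^2$. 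Feeding this sharpened HT estimate into the gradient step analysis from the proof of Theorem~\ref{thm:1_f}, which under RSC/RSS and the Polyak step size compares $\|z_t - \widehat{\theta}\|^2$ to $\|\theta_t - \widehat{\theta}\|^2$, removes the expansiveness factor and yields the monotonicity $\|\theta_{t+1} - \widehat{\theta}\|^2 \leq \|\theta_t - \widehat{\theta}\|^2$. The updated distance bound then lets me reapply the SNR contradiction to $\theta_{t+1}$ to recover $\mathrm{supp}(\widehat{\theta}) \subseteq \mathrm{supp}(\theta_{t+1})$, closing the induction.

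The main obstacle is the apparent circularity in the inductive step: the non-expansiveness invoked above is conditional on $\mathrm{supp}(\widehat{\theta}) \subseteq \Omega$, yet $\Omega$ itself is determined by the top-$s$ magnitudes of $z_t$. To break this loop I would argue at the level of $z_t$ directly. For each $i \in \mathrm{supp}(\widehat{\theta})$ the triangle inequality together with \eqref{eq:snr} gives $|(z_t)_i| \geq |\widehat{\theta}|_{\min} - |(z_t - \widehat{\theta})_i|$, which stays large relative to the threshold set by the $s$-th largest magnitude, while for $j \notin \mathrm{supp}(\widehat{\theta}) \cup \mathrm{supp}(\theta_t)$ the entry $(z_t)_j = -\gamma_t (\nabla f(\theta_t))_j$ is small because the Polyak step size combined with the RSS bound controls the restricted components of $\nabla f(\theta_t)$. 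Making this gap strictly positive so that every index in $\mathrm{supp}(\widehat{\theta})$ survives the top-$s$ selection is the delicate computation; once it is carried out, the remainder of the argument is bookkeeping.
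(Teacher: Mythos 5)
There is a genuine gap, and it is exactly the one you flag as the ``delicate computation'': a uniform threshold-gap argument at the level of $z_t$ cannot be carried out for all $t\ge t_0$, because the Sparse Polyak step does not force $\gamma_t\|\nabla f(\theta_t)\|_\infty$ to be small compared with $|\widehat{\theta}|_{\min}$. The best one can say inside the ball is of the form $\gamma_t\|\nabla f(\theta_t)\|_\infty \lesssim \|\theta_t-\widehat{\theta}\|\lesssim \tfrac{6}{\bar{\mu}}\|\hs(\nabla f(\widehat{\theta}))\|$, which is comparable to, not strictly below, the SNR floor $\tfrac{7}{\bar{\mu}}\|\hs(\nabla f(\widehat{\theta}))\|$; so when $\gamma_t$ happens to be large, an index of $\widehat{\mathcal{S}}$ may not be guaranteed to survive the top-$s$ selection by this route. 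The paper closes the loop with a dichotomy on the step size that your proposal is missing: if $\gamma_t < \tfrac{1}{2\bar{\mu}+12\bar{L}}$, then using $\|\nabla f(\theta_t)\|_\infty \le \|\nabla f(\widehat{\theta})\|_\infty + \bar{L}\|\theta_t-\widehat{\theta}\|$ (from the RSS, Lemma~\ref{lemma:smooth}) the entries of $z_t$ on $\widehat{\mathcal{S}}$ stay above $\tfrac{\|\hs(\nabla f(\widehat{\theta}))\|}{2\bar{\mu}}$ while entries outside $\mathcal{S}_t$ stay below it, so $\widehat{\mathcal{S}}\subset\mathcal{S}_{t+1}$ and the support-restricted bound gives $\|\theta_{t+1}-\widehat{\theta}\|^2\le(1-\bar{\mu}\gamma_t)\|\theta_t-\widehat{\theta}\|^2$; otherwise $\gamma_t \ge \tfrac{1}{2\bar{\mu}+12\bar{L}} > \tfrac{1}{40\bar{L}}$, and Lemma~\ref{lemma:base} together with $s\ge(240\bar{\kappa})^2 s^*$ yields a strict contraction even \emph{with} the hard-thresholding expansion factor, so the distance still does not increase; support containment at $t+1$ is then recovered a posteriori from the distance bound via the same SNR contradiction you use in the base case. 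Without this second branch your induction does not close, since your only mechanism for the invariant is the direct gap argument, which fails precisely in the large-step regime.

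A secondary flaw: relaxing $\|\theta_{t+1}-\widehat{\theta}\|^2=\|(z_t-\widehat{\theta})\mathbf{1}_\Omega\|^2$ to the full $\|z_t-\widehat{\theta}\|^2$ and then comparing with $\|\theta_t-\widehat{\theta}\|^2$ does not work with this step size: the full-vector expansion carries the term $\gamma_t^2\|\nabla f(\theta_t)\|^2$, whereas $\gamma_t$ is calibrated only against $\|\hs(\nabla f(\theta_t))\|^2$, and the ratio of the two can be as large as $d/s$; this is precisely the dimension-dependence the sparse rule is designed to avoid. You must keep the restriction to $\Omega\cup\widehat{\mathcal{S}}$ and run the support-restricted estimate (as in \eqref{eq:l3_1}), under which the Polyak choice makes the residual terms nonpositive and monotonicity follows. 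Your base case at $t_0$ (the $49>36$ contradiction) and the observation that containment of the support makes the thresholding step non-expansive are both correct and coincide with the paper's argument.
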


Algorithm~\ref{algo:iht} offers an approach to obtain a target accuracy that we assume known in advance to \eqref{eq:l0}, without requiring precise knowledge of $L$, $\mu$, $\tau$. For scenarios in which we only have access to a lower bound on the problem, Algorithm~\ref{algo:iht_ada} serves as an alternative; in most learning problems, the bound can be simply set to $0$, making the method broadly applicable. This adaptive variant of Algorithm~\ref{algo:iht} builds on the framework of \cite{hazan2019revisiting}, which reviews gradient descent with Polyak’s step size and its double-loop counterpart. By updating the lower bound adaptively in an outer loop, the method ensures that either the accuracy $\mathcal{O}\!\left(\frac{\|\mathrm{HT}_s(\nabla f(\widehat{\theta}))\|^2}{\bar{\mu}^2}\right)$ is attained or the updated lower bound remains valid.

\begin{algorithm}
\caption{IHT with Adaptive Polyak}
\label{algo:iht_ada}
\begin{algorithmic}[1]
\State \textbf{Input:} Function $f$ , a lower bound $\tilde{f}_1,$ and  sparsity parameter \( s \).
\State \textbf{Initialize:} \( \theta_0 = 0 \in \mathbb{R}^d \)
\For{k=1 to \(K\)}
\For{t = 0 to \( T-1 \)}
    \State Compute step-size \(\gamma_t = \frac{f(\theta_t) - \widetilde{f}_{k}}{10\| \hs(\nabla f(\theta_{t})) \|^2}\)
    \State Update: \( \theta_{t+1} = \mathrm{HT}_{s}\left( \theta_t - \gamma_t g_t \right) \)
\EndFor
    \State \(\bar{\theta}_{k} = \argmin_{t\leq T} f(\theta_{t})\)
    \State \(\widetilde{f}_{k+1} = \frac{f(\bar{\theta}_{k}) + \widetilde{f}_{k}}{2}\)
    \State \(\theta_{0} = \bar{\theta}_{k}\)
\EndFor
\State \textbf{Output:} \( \bar{\theta} = \argmin_{k\leq K} f(\bar{\theta}_{k}) \)
\end{algorithmic}
\end{algorithm}

Let $s \geq s^{*} > 0$, and define $f^{*} := \min_{\|\theta\|_0 \leq s^{*}} f(\theta)$, attained by some $s^{*}$-sparse vector $\theta^{*}$.
\begin{theorem}\label{thm:ada_f}
    Consider the iterates $\{\bar{\theta}_{k}\} $ generated by Algorithm~\ref{algo:iht_ada}. Assume that the function $f $ fulfills Assumptions~\ref{asp:rscvx} and \ref{asp:rsmooth}.  Then for $\varepsilon = (1 + \frac{1}{160 \bar{\kappa}})\frac{36(\bar{L}+\bar{\mu}) \| \hs (\nabla f(\theta^*)) \|^{2}}{\bar{\mu}^2}  $, when $\bar{\mu}>0, s \geq (480 \bar{\kappa})^2 s^{*}$, Algorithm~\ref{algo:iht_ada} requires at most $\widetilde{T}:=\left( 1 + \log_{2} \frac{2(f(\theta_{0}) - f(\theta^{*}))}{\varepsilon} \right)\! T$ gradient evaluations to achieve $f(\bar{\theta}) - f(\theta^*) \leq \varepsilon $ and \(\|\bar{\theta} - \theta^*\|^2 \leq (1 + \frac{1}{160 \bar{\kappa}})\frac{36\|\mathrm{HT}_s(\nabla f(\theta^*))\|^2}{\bar{\mu}^2}\). Here $T = \left\lceil \frac{1}{\log\left( 1 / (1 - 1/160 \bar{\kappa}) \right) } \log \left( \frac{\bar{\mu}^2 \| \theta_{0} - \theta^{*} \|^{2}}{36(1 + 1/160 \bar{\kappa})\| \hs (\nabla f(\theta^*)) \|^{2} } \right)   \right\rceil$.
\end{theorem}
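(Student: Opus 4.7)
The plan is to combine an inner-loop contraction analogous to Theorem~\ref{thm:1_f} with an outer-loop geometric halving of the surrogate slack $f(\theta^*) - \widetilde{f}_k$. Throughout the outer loop I maintain the inductive invariant $\widetilde{f}_k \leq f(\theta^*)$, starting from $\widetilde{f}_1 \leq f(\theta^*)$.

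First I would re-derive the one-step inequality underlying Theorem~\ref{thm:1_f} with the step-size denominator $10$ (rather than $5$) and the strengthened sparsity requirement $s \geq (480\bar{\kappa})^2 s^*$ (rather than $(240\bar{\kappa})^2 s^*$). The halved step-size propagates through the same RSC/RSS expansion of $\|\theta_t - \gamma_t \nabla f(\theta_t) - \theta^*\|^2$ and the hard-thresholding expansion factor, so the contraction rate worsens from $1 - 1/(80\bar{\kappa})$ to $1 - 1/(160\bar{\kappa})$, while the quadrupled sparsity floor exactly compensates the new constants. The resulting per-step bound, valid whenever $\|\theta_t - \theta^*\|^2 \geq 36\|\mathrm{HT}_s(\nabla f(\theta^*))\|^2/\bar{\mu}^2$, is invoked with $\widehat{\theta} \leftarrow \theta^*$ and $\widehat{f} \leftarrow \widetilde{f}_k$; the invariant $\widetilde{f}_k \leq f(\theta^*)$ ensures that the Algorithm~\ref{algo:iht_ada} step-size does not exceed what the analysis tolerates. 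The value of $T$ is then chosen so that $T$ contractions starting from the warm start $\bar{\theta}_{k-1}$ reduce the distance-squared to the stated neighborhood, and restricted smoothness on the $(s+s^*)$-sparse difference $\bar{\theta}_k - \theta^*$ converts this into $f(\bar{\theta}_k) - f(\theta^*) \leq \varepsilon/2$.

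For the outer loop I run the following dichotomy. If $f(\bar{\theta}_k) - f(\theta^*) \leq \tfrac{1}{2}(f(\theta^*) - \widetilde{f}_k)$, then
\[
\widetilde{f}_{k+1} = \tfrac{f(\bar{\theta}_k) + \widetilde{f}_k}{2} \leq f(\theta^*) - \tfrac{f(\theta^*) - \widetilde{f}_k}{4} \leq f(\theta^*),
\]
so the invariant persists and the slack shrinks by at least a constant geometric factor. Otherwise $f(\bar{\theta}_k) - f(\theta^*) > \tfrac{1}{2}(f(\theta^*) - \widetilde{f}_k)$, which combined with the inner-loop bound $f(\bar{\theta}_k) - f(\theta^*) \leq \varepsilon/2$ forces $f(\theta^*) - \widetilde{f}_k < \varepsilon$, and the output $\bar{\theta} = \argmin_k f(\bar{\theta}_k)$ already meets the target. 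Since the initial slack is controlled by $f(\theta_0) - f(\theta^*)$ through the invariant, iterating $K = 1 + \log_2(2(f(\theta_0)-f(\theta^*))/\varepsilon)$ outer steps suffices, yielding a total of $K \cdot T = \widetilde{T}$ gradient evaluations.

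The main obstacle will be the first step: faithfully re-tracking the constants of Theorem~\ref{thm:1_f}'s proof under the denominator-$10$ step-size and the larger sparsity requirement. In particular, verifying that the doubling of the step-size denominator is exactly compensated by the quadrupling of the sparsity floor—so that the limiting neighborhood matches $(1 + 1/(160\bar{\kappa}))\cdot 36\|\mathrm{HT}_s(\nabla f(\theta^*))\|^2/\bar{\mu}^2$—is a somewhat delicate bookkeeping exercise. The factor $(\bar{L}+\bar{\mu})$ in $\varepsilon$ then arises naturally at the very end, when restricted smoothness on sparse differences is used to pass from the distance-squared bound to a function-value gap.
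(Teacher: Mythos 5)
Your overall architecture (an inner loop contracting at rate $1-\tfrac{1}{160\bar{\kappa}}$ under the denominator-$10$ step and $s\ge(480\bar{\kappa})^2 s^*$, plus an outer loop that geometrically reduces the slack $f(\theta^*)-\widetilde{f}_k$) matches the paper's, but there is a genuine gap in the step both branches of your dichotomy lean on. You assert that the invariant $\widetilde{f}_k\le f(\theta^*)$ "ensures that the Algorithm~\ref{algo:iht_ada} step-size does not exceed what the analysis tolerates," and from this you conclude that \emph{every} epoch delivers $f(\bar{\theta}_k)-f(\theta^*)\le \varepsilon/2$. The invariant only controls the step from below: writing $a_t=\frac{f(\theta_t)-f(\theta^*)}{5\|\hs(\nabla f(\theta_t))\|^2}$, it gives $\gamma_t\ge a_t/2$, whereas the contraction argument (Lemma~\ref{lemma:base} applied with $\widehat{\theta}=\theta^*$) requires the \emph{upper} bound $\gamma_t\le a_t$, which is equivalent to $f(\theta^*)-\widetilde{f}_k\le f(\theta_t)-f(\theta^*)$. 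When the surrogate is loose relative to the current suboptimality — precisely the regime the double loop exists for — $\gamma_t$ overshoots, the residual $-2\gamma_t(f(\theta_t)-f(\theta^*))+5\gamma_t^2\|\mathrm{HT}_{2s}(\nabla f(\theta_t))\|^2$ is no longer nonpositive, and no per-epoch accuracy bound follows. Your second branch, which combines "slack $<2(f(\bar{\theta}_k)-f(\theta^*))$" with that unproven inner-loop bound, is therefore circular.

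The paper closes this hole by splitting each epoch on the step-size condition itself: either $\gamma_t\le a_t$ for all $t$ in the epoch, in which case $T$ contractions reach the target neighborhood, or $\gamma_t>a_t$ occurs for some $t$, in which case the violating inequality $f(\theta_t)-\widetilde{f}_k>2(f(\theta_t)-f(\theta^*))$ simultaneously certifies that $\widetilde{f}_{k+1}=\tfrac{f(\bar{\theta}_k)+\widetilde{f}_k}{2}<f(\theta^*)$ (so the invariant persists) and that $f(\bar{\theta}_k)-f(\theta^*)<f(\theta^*)-\widetilde{f}_k$, while the slack halves across epochs; after $O\!\left(\log_2\frac{f(\theta_0)-f(\theta^*)}{\varepsilon}\right)$ epochs either the first case fires or the best iterate is already $\varepsilon$-accurate. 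Some step-size-based case split of this kind is needed; the function-value-based split you propose cannot substitute for it. A secondary slip: in your first branch the displayed inequality only bounds the new slack from \emph{below} by a quarter of the old one; the geometric decrease $f(\theta^*)-\widetilde{f}_{k+1}\le\tfrac12\left(f(\theta^*)-\widetilde{f}_k\right)$ instead follows from $f(\bar{\theta}_k)\ge f(\theta^*)$ together with the update rule, as in the paper.
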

This theorem focuses on the distance of the iterates to $\theta^{*}$. The quantity $T$ can be interpreted as the number of iterations required to reach the desired accuracy when applying Algorithm~\ref{algo:iht} with $\widehat{f} = f^{*}$. The additional term $\mathcal{O}\left( \frac{f(\theta_{0}) - f(\theta^{*})}{\varepsilon} \right)$ in the definition of $\widetilde{T}$ corresponds to the number of outer iterations needed to obtain a sufficiently tight lower bound for the targeted accuracy. Similar order guarantees are established in Theorems~\ref{thm:1_f} and \ref{thm:ada_f}. The proof of Theorem~\ref{thm:ada_f} is provided in Appendix~\ref{sec:ada}.

\section{Statistical Guarantees}
\label{sec:stat}

The results in Section~\ref{sec:main} are deterministic in nature and consequently, do not depend on a data generation model. In, contrast, in this section we use Theorem~\ref{thm:1_f} to provide guarantees for specific statistical models. Corollaries~\ref{coro:lg} and~\ref{coro:mat} establish the computational-statistical performance guarantees for sparse logistic regression and low-rank matrix regression respectively. We provide guarantees for additional statistical models, including sparse linear regression, in Appendix~\ref{sec:stat_guarantees}.

\vspace{-5pt}
\subsection{Logistic Regression}

We consider a dataset consisting of observations \( z_i = (x_i, y_i) \) for \( i = 1, \dots, n \), where \( x_i \in \mathbb{R}^d \) denote the feature vectors, and \( y_i \in \left\{ 0, 1 \right\} \) denote the corresponding responses. The feature vectors are organized into the design matrix  
\[
X \triangleq \left( x_{1}, \dots, x_{n} \right)^{\top} \in \mathbb{R}^{n\times d} .
\]
We assume that the relationship between \( y_i \) and \( x_i \) follows the model  
\begin{equation}\label{eq:lg}
    \Pr(y_i = 1 \mid x_i) = \frac{1}{1 + \exp(-x_i^\top \theta^*)},
\end{equation}  
where \( \theta^* \) is an \( s^* \)-sparse vector representing the underlying ground truth parameter.
The objective function is defined as 
\[
f(\theta) = \frac{1}{n} \sum_{i=1}^{n} \log \left( 1 + \exp( x_{i}^{T} \theta)\right) - y_{i} x_{i}^{T} \theta.
\]

We assume that each covariate vector \( x_i \) is drawn independently from a multivariate normal distribution \( \mathcal{N}(0, \Sigma) \), where \( \Sigma \) is a non-singular covariance matrix. By invoking Corollary 1 in \cite{yuan}, we can establish that the objective function \( f(\theta) \) satisfies the RSS and RSC conditions, as formalized in the following lemma.
\begin{lemma}\label{lm:lg} Consider the sparse linear logistic regression problem described above.
    Suppose the covariates \( x_i \) are uniformly bounded such that \( \|x_i\| \leq 1 \) for all \( i  \in [n]\). Then \( f(\theta) \) is \( \bar{L} \)-smooth with \( \bar{L} = 1 \).  Moreover, with probability at least \( 1 - e^{-c_0 n} \), the RSC condition holds with curvature parameter $\mu := \frac{1}{2} \exp(-4R) \, \sigma_{\min}(\Sigma)$ and tolerance $\tau := c_1 \exp(-4R) \, \zeta(\Sigma) \, \frac{\log d}{n},$ where \( R := \|\theta^{*}\| \), \(\zeta(\Sigma) = \max_{i=1, \dots, d} \Sigma_{ii}\), and \( c_0, c_1 > 0 \) are universal constants. 
\end{lemma}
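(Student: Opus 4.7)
The plan is to reduce both assertions to properties of the Hessian
$$\nabla^2 f(\theta) = \frac{1}{n}\sum_{i=1}^n \sigma'(x_i^\top \theta)\, x_i x_i^\top,$$
where $\sigma'(z) = \sigma(z)(1-\sigma(z))$ is the derivative of the logistic function and satisfies $0 < \sigma'(z) \leq 1/4$. The smoothness claim will follow from a universal upper bound on $\nabla^2 f$, while the RSC claim requires a pointwise lower bound on $\sigma'$ combined with a restricted eigenvalue inequality for the sample covariance $\hat{\Sigma} = \tfrac{1}{n}\sum_i x_i x_i^\top$.

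For the smoothness claim, I would combine $\sigma'(z) \leq 1/4$ with $\|x_i\| \leq 1$, which gives $\|x_i x_i^\top\|_{\mathrm{op}} \leq 1$, to conclude $\nabla^2 f(\theta) \preceq \tfrac{1}{4n}\sum_i x_i x_i^\top \preceq I$ for every $\theta$. Integrating this Hessian bound along the segment joining $\theta_1$ and $\theta_2$ then yields the $\bar{L}=1$ smoothness inequality demanded by Assumption~\ref{asp:rsmooth}; in fact $\bar{L} = 1/4$ would suffice, but matching the stated constant is enough.

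For the RSC claim, I would first use the elementary inequality $\sigma'(z) \geq \tfrac{1}{4}\exp(-|z|)$. Because $\|x_i\| \leq 1$, for any $\theta$ on the segment between $\theta_2$ and $\theta_1$ with both endpoints at controlled distance from $\theta^{*}$, one has $|x_i^\top \theta| \leq \|\theta\| = O(R)$ and therefore $\sigma'(x_i^\top \theta) \geq c\exp(-4R)$ for an absolute constant $c$. This pointwise bound reduces the RSC inequality to a restricted eigenvalue statement for $\hat{\Sigma}$. I would then invoke the standard high-probability bound for Gaussian covariance ensembles, which is exactly the content of Corollary~1 in \cite{yuan}: with probability at least $1 - e^{-c_0 n}$,
$$v^\top \hat{\Sigma} v \;\geq\; \tfrac{1}{2}\,\sigma_{\min}(\Sigma)\,\|v\|^2 \;-\; c_1\,\zeta(\Sigma)\,\tfrac{\log d}{n}\,\|v\|_1^2 \qquad \forall\, v \in \mathbb{R}^d.$$
Multiplying by the $\exp(-4R)$ factor inherited from the $\sigma'$ lower bound and integrating the resulting quadratic form along the segment $\theta_2 \to \theta_1$ produces the stated RSC inequality with parameters $\mu$ and $\tau$ as advertised.

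The main obstacle is that the Hessian lower bound degrades exponentially with $\|\theta\|$, so the RSC inequality as written in Assumption~\ref{asp:rscvx} cannot hold for all of $\mathbb{R}^d$; the argument only delivers it on a neighborhood of $\theta^{*}$ whose radius is $O(1)$. This is precisely why the authors highlighted the relaxation to pairs with $\|\theta - \theta^{*}\| \leq 1$ and deferred the full treatment to the GLM appendix. The step that requires the most care is checking that the Sparse Polyak trajectory remains inside this neighborhood so that the localized RSC can be applied at every iteration; all remaining work is either a standard Hessian integration or a direct appeal to the Gaussian restricted eigenvalue result.
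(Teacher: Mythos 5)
The paper never proves Lemma~\ref{lm:lg} itself: the statement is imported wholesale from Corollary~1 of \cite{yuan} (``By invoking Corollary 1 in \cite{yuan}\dots''), and no argument for it appears in the appendix. Your proposal therefore takes a genuinely different route, reconstructing the result from first principles through the Hessian $\nabla^2 f(\theta)=\frac{1}{n}\sum_i\sigma'(x_i^\top\theta)\,x_ix_i^\top$. The smoothness half is complete and correct (it even yields $L=1/4\le 1$). The curvature half has the right skeleton --- lower bound $\sigma'(z)\ge\tfrac14 e^{-|z|}$, reduce to a restricted-eigenvalue statement for $\hat{\Sigma}=\tfrac1n\sum_i x_ix_i^\top$, integrate along the segment --- but two details need repair. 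First, the high-probability bound $v^\top\hat{\Sigma}v\ge\tfrac12\sigma_{\min}(\Sigma)\|v\|^2-c\,\zeta(\Sigma)\tfrac{\log d}{n}\|v\|_1^2$ is \emph{not} ``the content of Corollary~1 in \cite{yuan}''; that corollary is precisely the logistic RSS/RSC statement you are trying to establish, so invoking it there would be circular. The covariance inequality you need is the Gaussian restricted-eigenvalue result of Raskutti--Wainwright--Yu, which this paper accesses as \cite[Lemma~6]{fast} in the linear-regression case. Second, the constants do not close as written: $\sigma'\ge\tfrac14 e^{-|z|}$ combined with that RE bound gives $\mu=\tfrac18 e^{-4R}\sigma_{\min}(\Sigma)$ rather than $\tfrac12 e^{-4R}\sigma_{\min}(\Sigma)$, and the exponent $4R$ only emerges once you fix the neighborhood so that every point on the segment satisfies $\|\theta\|\le 4R$; your ``$O(R)$'' leaves this unspecified.

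Your closing caveat is not a defect of your argument but a genuine feature of the statement: a Hessian-based bound can only deliver \eqref{eq:rsc} locally, and in fact the global version with $\mu-s\tau>0$ is impossible for the logistic loss, since under $\|x_i\|\le 1$ its gradient is bounded by $1$, so the Bregman remainder grows at most linearly in $\|\theta_1-\theta_2\|$ while the sparse quadratic lower bound grows quadratically. This is exactly why the paper flags the relaxation to pairs near $\theta^*$ after Assumption~\ref{asp:rscvx} and defers the unrestricted GLM treatment to Appendix~\ref{sec:glm}; the cited result is likewise a localized statement. Whether the Sparse Polyak trajectory remains inside the region where the localized RSC applies is a matter for the proof of Corollary~\ref{coro:lg} and Appendix~\ref{sec:glm}, not for Lemma~\ref{lm:lg}, so you do not need to resolve it to prove this lemma.
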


\begin{corollary}\label{coro:lg}
Consider the sparse linear logistic regression problem described above. Under the assumptions of Lemma~\ref{lm:lg}, further suppose that the sample size is sufficiently large so that \(\bar{\mu} > 0\). Further, assume the design matrix \(X \in \mathbb{R}^{n \times d}\) is normalized such that \(\| X_j / \sqrt{n} \| \leq C\) for all \(j = 1, \dots, d\). Let \(\{\theta_t\}_{t \geq 0}\) be the sequence of iterates produced by Algorithm~\ref{algo:iht} when applied to the sparse logistic regression problem. Assume the sparsity parameter satisfies \(s \geq \left(240\, \bar{\kappa}\right)^2 s^*\), and $\widehat{f} = f(\theta^*)$. Then, with probability at least \(1 - e^{-c_0 n} - \frac{2}{d}\), the following hold:

\textbf{(i)}If \(\|\theta_t - \theta^*\|^2 \geq 72 C^2 \frac{s \log d}{n \bar{\mu}^2}\), the iterates exhibit contraction toward \(\theta^*\), i.e., $\|\theta_{t+1} - \theta^*\|^2 \leq \left(1 - \frac{1}{80 \bar{\kappa}}\right) \|\theta_t - \theta^*\|^2.$
\textbf{(ii)} Let \(t_0\) denote the first iteration at which \(\|\theta_{t_0} - \theta^*\|^2 < 72 C^2 \frac{s \log d}{n \bar{\mu}^2}\). Then for all \(t \geq t_0\), the iterates remain confined in a neighborhood of \(\theta^*\): $\|\theta_t - \theta^*\|^2 \leq \left(1 + \frac{1}{80\bar{\kappa}}\right) 72 C^2 \frac{s \log d}{n \bar{\mu}^2}.$

\textbf{(iii)} If \(\theta^*\) satisfies the SNR condition~\eqref{eq:snr}, then the iterates remain confined in a neighborhood of $\theta^*$ \(\|\theta_t - \theta^*\|^2 \leq 72 C^2 \frac{s \log d}{n \bar{\mu}^2}\) for all \(t \geq t_0\), and the support of \(\theta^*\) is exactly recovered and preserved for all subsequent iterations.
\end{corollary}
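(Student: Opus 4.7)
The plan is to specialize Theorem~\ref{thm:1_f} and Corollary~\ref{coro:snr} to the logistic regression problem. To do so, I need to verify the assumptions of the theorem in this statistical setting: the restricted strong convexity and smoothness conditions, the relationship $s \geq (240\bar{\kappa})^2 s^{*}$, and a high-probability upper bound on $\|\mathrm{HT}_s(\nabla f(\theta^{*}))\|^2$. The RSC/RSS conditions are already delivered by Lemma~\ref{lm:lg}, which gives $L=1$ and explicit forms for $\mu$ and $\tau$; together with $\bar{L}=L+3\tau s$, $\bar{\mu}=\mu-3\tau s$, and the assumption $\bar{\mu}>0$, these supply the deterministic hypotheses of Theorem~\ref{thm:1_f} on the event of probability $1-e^{-c_0 n}$ guaranteed by the lemma. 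I set $\widehat{\theta}=\theta^{*}$ and $\widehat{f}=f(\theta^{*})$.

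The core stochastic step is bounding $\|\mathrm{HT}_s(\nabla f(\theta^{*}))\|^2$. For the logistic loss, $\nabla f(\theta^{*}) = \frac{1}{n}\sum_{i=1}^{n}(p_i - y_i)x_i$, where $p_i = 1/(1+\exp(-x_i^{\top}\theta^{*}))$. By the model \eqref{eq:lg}, the residuals $y_i - p_i$ are mean-zero and bounded in $[-1,1]$, hence sub-Gaussian with parameter $1$ conditional on $X$. The $j$-th coordinate of the gradient is $\frac{1}{n}\sum_i (p_i - y_i)X_{ij}$, a sum of conditionally independent sub-Gaussian terms. Using the column normalization $\|X_j/\sqrt{n}\|\leq C$ and a standard Hoeffding-type bound conditional on $X$, I would get
\[
\Pr\!\left(|\nabla f(\theta^{*})_j| \geq t\right) \leq 2\exp\!\left(-\tfrac{n t^2}{2 C^2}\right).
\]
A union bound over the $d$ coordinates with $t = C\sqrt{2\log d/n}$ then yields $\|\nabla f(\theta^{*})\|_{\infty}^2 \leq 2C^2 \log d/n$ with probability at least $1-2/d$. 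Since the hard thresholding keeps $s$ coordinates, this implies
\[
\|\mathrm{HT}_s(\nabla f(\theta^{*}))\|^2 \leq s\,\|\nabla f(\theta^{*})\|_{\infty}^2 \leq 2 C^2 \frac{s \log d}{n}.
\]

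With this bound in hand, plugging into the neighborhood radius $36\|\mathrm{HT}_s(\nabla f(\theta^{*}))\|^2/\bar{\mu}^2$ from Theorem~\ref{thm:1_f} produces exactly the threshold $72 C^2 s\log d/(n\bar{\mu}^2)$ appearing in parts (i) and (ii). The contraction rate $1-1/(80\bar{\kappa})$ and the $(1+1/(80\bar{\kappa}))$ slack in (ii) transfer verbatim from the theorem. Part (iii) follows by invoking Corollary~\ref{coro:snr}: the SNR hypothesis $|\theta^{*}|_{\min}\geq 7\|\mathrm{HT}_s(\nabla f(\theta^{*}))\|/\bar{\mu}$ guarantees monotone decrease of $\|\theta_t-\theta^{*}\|^2$ and support containment, so the extra $1/(80\bar{\kappa})$ factor is removed and the support of $\theta^{*}$ is preserved from $t_0$ onward. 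The final probability is obtained by intersecting the RSC event (probability $\geq 1-e^{-c_0 n}$) with the gradient concentration event (probability $\geq 1-2/d$).

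The main technical obstacle is ensuring the gradient concentration is carried out correctly: the summands $(p_i-y_i)X_{ij}$ are independent across $i$ only conditional on $X$, so the tail bound must be obtained conditionally, after which the column-norm control $\|X_j/\sqrt{n}\|\leq C$ renders the bound deterministic in $X$ and permits a clean union bound. A secondary nuisance is book-keeping on $\bar{\mu}$: the condition $\bar{\mu}>0$ together with the scaling $\tau = \mathcal{O}(\log d/n)$ from Lemma~\ref{lm:lg} must be maintained throughout, but this is essentially the same sample-size assumption that appears in standard analyses of IHT for GLMs and introduces no new difficulty. Everything else is a direct substitution into Theorem~\ref{thm:1_f} and Corollary~\ref{coro:snr}.
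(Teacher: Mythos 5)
Your overall route is the same as the paper's: invoke Lemma~\ref{lm:lg} for RSC/RSS on an event of probability $1-e^{-c_0 n}$, apply Theorem~\ref{thm:1_f} with $\widehat{\theta}=\theta^*$ and $\widehat{f}=f(\theta^*)$, bound $\|\mathrm{HT}_s(\nabla f(\theta^*))\|^2 \leq s\|\nabla f(\theta^*)\|_\infty^2$ via sub-Gaussian concentration of the gradient coordinates conditional on $X$, and get part (iii) from Corollary~\ref{coro:snr}. However, your concentration step as written does not deliver the stated conclusion. You take the residual $p_i-y_i$ to be sub-Gaussian with parameter $1$, which gives the per-coordinate tail $\Pr(|\nabla f(\theta^*)_j|\geq t)\leq 2\exp(-nt^2/(2C^2))$; with your choice $t=C\sqrt{2\log d/n}$ the exponent equals $\log d$, so each coordinate already fails with probability $2/d$, and the union bound over $d$ coordinates yields a total failure probability of $2$ — vacuous — not $2/d$. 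So you cannot simultaneously claim the constant $72$ and the probability $1-2/d$ from the bound you wrote; with parameter $1$ you would have to inflate $t$ (e.g.\ to $2C\sqrt{\log d/n}$), which changes the radius to $144\,C^2 s\log d/(n\bar{\mu}^2)$ and no longer matches the statement.

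The fix is the sharper sub-Gaussian constant that the paper uses: conditional on $x_i$, the residual $\sigma(x_i^\top\theta^*)-y_i$ takes values in $\{\sigma(x_i^\top\theta^*)-1,\ \sigma(x_i^\top\theta^*)\}$, an interval of length $1$, hence it is sub-Gaussian with variance proxy $1/4$, not $1$. Then each gradient coordinate has variance proxy at most $C^2/(4n)$, the tail becomes $2\exp(-2nt^2/C^2)$, and with the same $t=C\sqrt{2\log d/n}$ each coordinate fails with probability $2d^{-4}$, so the union bound gives at most $2d^{-3}\leq 2/d$ and you recover exactly $\|\nabla f(\theta^*)\|_\infty^2\leq 2C^2\log d/n$, hence the radius $72\,C^2 s\log d/(n\bar{\mu}^2)$. (The paper reaches the same numbers via the sub-Gaussian maximal inequality of \cite[Example 7.14]{hd} with $\sigma^2=1/4$, which is equivalent bookkeeping.) With that correction, the rest of your argument — intersecting the two events and transferring (i), (ii) from Theorem~\ref{thm:1_f} and (iii) from Corollary~\ref{coro:snr} — is exactly the paper's proof.
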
 

The proof of Corollary~\ref{coro:lg} is provided in Appendix~\ref{appendix:coro:lg}.

\begin{remark}\label{remark:glm}  The  assumption $\|x_i\|\leq 1,$ $\forall i \in [n]$ is required in \cite{yuan} to provide performance guarantees of HT with a fixed step-size on Logistic Regression. However, we note that this assumption is extremely restrictive in the high-dimensional setting. We provide additional results that do not require $\|x_i\|\leq 1,$ $ \forall i \in [n]$ in Appendix~\ref{sec:glm}. Our results in Appendix~\ref{sec:glm} further apply to additional GLMs that do not satisfy the RSC condition \eqref{eq:rsc} globally, and are aligned with the results in \cite{loh} both in terms of sample and asymptotic convergence rates.  In these cases, convergence at a rate of \( \left( 1 - c_0/\bar{\kappa} \right)\) can be guaranteed if the algorithm is suitably initialized. However,  a stricter requirement on the sparsity level, specifically \( s \geq \mathcal{O}(\bar{\kappa}^{4})s^{*} \), is necessary otherwise, and a rate of \( \left( 1 - 1/c_1\bar{\kappa}^{2} \right) \) can be guaranteed. Here $c_0,$ and $c_1$ are universal constants.
\end{remark}

The result above and the result in Appendix~\ref{sec:glm} match the observed behavior of Sparse Polyak in Fig~\ref{fig:classic_compare}. Namely, we observe that Sparse Polyak indeed achieves rate invariance, as $\frac{d}{n} \to \infty,$ the rate and final precision remain constant as long as $\frac{s\log d}{n}$ is left unchanged. 

\vspace{-5pt}
\subsection{Matrix Regression}
Consider the data generation model
\[
    y_{i} = \langle X_{i}, \Theta^{*} \rangle + \varepsilon_{i},\quad \text{for }i=1,2\dots,n,
\]
where $\Theta^{*} \in \mathbb{R}^{d\times d}$ is a matrix of rank at most $s^*.$  We assume, $X_i \in \mathbb{R}^{d \times d},$ $\mathrm{vec}(X_i) \sim \mathcal{N}(0,\Sigma)$ are i.i.d, and  $\Sigma \succ 0.$ Further, $ \varepsilon_{i} \sim N(0, \sigma^{2})$ are i.i.d. and independent of $X_i$. Define $f(\Theta) = \frac{1}{2n} \sum_{i=1}^{n} \left( y_i - \langle X_i, \Theta \rangle \right)^2$. 

By invoking \cite[Lemma 7]{fast}, we establish the RSS and RSC properties of $f(\Theta)$, as formalized in the following lemma.
\begin{lemma}\label{lm:mr} 
Consider the low-rank matrix regression problem described above. Then,  with probability at least \( 1 - e^{-c_0 n} \), \( f(\Theta) \) satisfies the RSS and RSC conditions with respect to the Frobenius norm and the nuclear norm.  The corresponding parameters are given by:  
\vspace{-2pt}
\begin{align*}
L = 2\sigma_{\max}(\Sigma), \quad \mu = \frac{1}{2}\sigma_{\min}(\Sigma), \quad \text{and} \quad \tau = c_1 \zeta(\Sigma) \frac{d}{n},
\vspace{-2pt}
\end{align*} 
where \( \zeta(\Sigma) := \sup_{\|u\| = 1, \|v\| = 1} \operatorname{Var}(u^\top X_1 v) \), and \( c_0, c_1 > 0 \) are universal constants.  
\end{lemma}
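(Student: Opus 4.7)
The plan is to reduce the Bregman divergence of the quadratic loss to a single random quadratic form and then invoke Lemma 7 of \cite{fast} to obtain a uniform concentration bound with the advertised nuclear-norm tolerance. A direct computation shows that for $\Delta := \Theta_1 - \Theta_2$,
\[
f(\Theta_1) - f(\Theta_2) - \langle \nabla f(\Theta_2), \Delta \rangle \;=\; \frac{1}{2n}\sum_{i=1}^{n}\langle X_i, \Delta \rangle^2.
\]
Taking expectations and using $\mathrm{vec}(X_i) \sim \mathcal{N}(0,\Sigma)$, the population analogue is $\tfrac{1}{2}\,\mathrm{vec}(\Delta)^\top \Sigma\,\mathrm{vec}(\Delta)$, which lies in the interval $[\tfrac{1}{2}\sigma_{\min}(\Sigma)\|\Delta\|_F^2,\; \tfrac{1}{2}\sigma_{\max}(\Sigma)\|\Delta\|_F^2]$. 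Thus establishing RSS/RSC with respect to $(\|\cdot\|_F, \|\cdot\|_*)$ reduces to bounding the deviation of the empirical quadratic form from its expectation uniformly in $\Delta$, with a remainder scaling in $\|\Delta\|_*^2$ rather than $\|\Delta\|_F^2$.

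The main substantive step is to apply \cite[Lemma 7]{fast}, which, for the Gaussian design above, yields with probability at least $1-e^{-c_0 n}$ and simultaneously for all $\Delta \in \mathbb{R}^{d\times d}$ the two-sided bound
\[
\tfrac{1}{2}\,\mathrm{vec}(\Delta)^\top\Sigma\,\mathrm{vec}(\Delta) - c_1 \zeta(\Sigma)\tfrac{d}{n}\|\Delta\|_*^2 \;\leq\; \tfrac{1}{n}\sum_{i=1}^n \langle X_i, \Delta\rangle^2 \;\leq\; 2\,\mathrm{vec}(\Delta)^\top\Sigma\,\mathrm{vec}(\Delta) + c_1 \zeta(\Sigma)\tfrac{d}{n}\|\Delta\|_*^2.
\]
The role of $\zeta(\Sigma) = \sup_{\|u\|=\|v\|=1} \mathrm{Var}(u^\top X_1 v)$ is to control the supremum of the underlying Gaussian process over nuclear-norm balls via duality: the unit nuclear-norm ball is the convex hull of rank-one matrices, so variances along rank-one directions are what appear in the deviation term.

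Combining these two inequalities with the Bregman identity and dividing by $2$ gives, for every $\Delta$,
\[
\tfrac{\sigma_{\min}(\Sigma)}{4}\|\Delta\|_F^2 - \tfrac{c_1}{2}\zeta(\Sigma)\tfrac{d}{n}\|\Delta\|_*^2 \;\leq\; f(\Theta_1)-f(\Theta_2)-\langle \nabla f(\Theta_2),\Delta\rangle \;\leq\; \sigma_{\max}(\Sigma)\|\Delta\|_F^2 + \tfrac{c_1}{2}\zeta(\Sigma)\tfrac{d}{n}\|\Delta\|_*^2.
\]
Matching these to the definitions in Assumptions~\ref{asp:rscvx} and~\ref{asp:rsmooth} (with $\|\cdot\|_F$ and $\|\cdot\|_*$ substituted for $\|\cdot\|$ and $\|\cdot\|_1$) identifies $\mu = \sigma_{\min}(\Sigma)/2$, $L = 2\sigma_{\max}(\Sigma)$, and $\tau = c_1 \zeta(\Sigma)\,d/n$ after a harmless renaming of the universal constant.

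The hard part is the uniform concentration statement itself, which is the heart of \cite[Lemma 7]{fast}; its proof relies on a Gaussian-process comparison (Gordon or Sudakov--Fernique) together with a peeling argument over nuclear-norm shells to convert a fixed-$\Delta$ deviation bound into a uniform one. In our setting, however, no new probabilistic work is required: the hypotheses of that lemma (i.i.d.\ Gaussian vectorizations with covariance $\Sigma$ and finite $\zeta(\Sigma)$) are satisfied by assumption, so the stated RSS and RSC parameters follow by direct substitution.
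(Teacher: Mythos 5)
Your proposal is correct and follows essentially the same route as the paper, which establishes Lemma~\ref{lm:mr} simply by invoking \cite[Lemma 7]{fast}; your Bregman-divergence reduction to the empirical quadratic form $\frac{1}{2n}\sum_i \langle X_i,\Delta\rangle^2$ and the subsequent matching of constants is just a more explicit rendering of that citation. The minor caveat is that the precise two-sided form you attribute to \cite[Lemma 7]{fast} (factors $1/2$ and $2$ on the population quadratic form) should be checked against the source, but any such discrepancy is absorbed into the universal constants $c_0, c_1$ and does not affect the stated parameters.
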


To enforce the suitable low-rank structure on the iterates we define
\vspace{-4pt}
\begin{align*}
\pj (W) = \sum_{i=1}^{s} \sigma_{i} u_{i} v_{i}^{T},
\vspace{-4pt}
\end{align*}
where $\sigma_{i}, i=1\dots,s $ are the $s $ largest singular values of $W $, and $u_{i}, v_{i} $ the corresponding singular vectors. We substitute all instances of the $\hs $ operator by $\pj $ in Algorithm~\ref{algo:iht} and~\ref{algo:iht_ada} (c.f. Appendix~\ref{sec:ada}). 

\begin{corollary}\label{coro:mat}
Consider the low rank matrix regression problem described above. Let \(\{\Theta_t\}_{t \geq 0}\) be the sequence of iterates generated by Algorithm~\ref{algo:iht} when applied to a low rank matrix regression problem. Suppose that $\widehat{f} = f(\Theta^*) $, $n$ is sufficiently large such that $\bar{\mu} > 0$, and  \(s \geq (240 \bar{\kappa})^2 s^*\).
Then, with probability at least \(1 - e^{-c_0 n} - 2e^{-4d}\), the following holds: 
\textbf{(i)} If \(\|\Theta_t - \Theta^*\|_F^2 \geq \frac{7200 \sigma^2 \zeta(\Sigma) s d}{n \bar{\mu}^2}\), then the iterates contract relative to \(\Theta^*\) as $\|\Theta_{t+1} - \Theta^*\|_F^2 \leq \left(1 - \frac{1}{80 \bar{\kappa}} \right) \|\Theta_t - \Theta^*\|_F^2.$  \textbf{(ii)} Let \(t_0\) be the first iteration for which \(\|\Theta_{t_0} - \Theta^*\|_F^2 < \frac{7200 \sigma^2 \zeta(\Sigma) s d}{n \bar{\mu}^2}\). Then, for all \(t \geq t_0\), the iterates remain in a stable neighborhood around \(\Theta^*\), with $\|\Theta_t - \Theta^*\|_F^2 \leq \left(1 + \frac{1}{80 \bar{\kappa}} \right) \frac{7200 \sigma^2 \zeta(\Sigma) s d}{n \bar{\mu}^2}.$ 
\end{corollary}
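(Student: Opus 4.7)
}
The plan is to specialize Theorem~\ref{thm:1_f} to the matrix regression setting, with $\widehat{\Theta}=\Theta^{*}$ and $\widehat f = f(\Theta^{*})$, and to translate the abstract error floor
$36\|\pj(\nabla f(\widehat\Theta))\|_F^{2}/\bar\mu^{2}$
into the statistical quantity $7200\,\sigma^{2}\zeta(\Sigma)\,sd/(n\bar\mu^{2})$. First, I would invoke Lemma~\ref{lm:mr} on the event $\mathcal{E}_{1}$ of probability at least $1-e^{-c_{0}n}$, which gives the RSS/RSC parameters $L=2\sigma_{\max}(\Sigma)$, $\mu=\tfrac12\sigma_{\min}(\Sigma)$, $\tau=c_{1}\zeta(\Sigma)d/n$, and hence well-defined $\bar L$, $\bar\mu$, $\bar\kappa$ after using the assumption that $n$ is large enough to make $\bar\mu>0$. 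Since the sparsity hypothesis $s\geq(240\bar\kappa)^{2}s^{*}$ is assumed, the hypotheses of Theorem~\ref{thm:1_f} (applied in the matrix/low-rank version in which $\hs$ is replaced by $\pj$ and $\|\cdot\|$ by $\|\cdot\|_{F}$) are met deterministically on $\mathcal{E}_{1}$.

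Next, the core computation is to bound $\|\pj(\nabla f(\Theta^{*}))\|_F^{2}$. A direct calculation gives $\nabla f(\Theta^{*})=-\tfrac1n\sum_{i=1}^{n}\varepsilon_{i}X_{i}$. For any matrix $W$, $\pj(W)$ retains only its top $s$ singular values, so
\begin{equation*}
\|\pj(W)\|_F^{2}=\sum_{i=1}^{s}\sigma_i(W)^{2}\leq s\,\|W\|_{\mathrm{op}}^{2}.
\end{equation*}
I would then apply a standard operator-norm concentration bound for sums of independent $\varepsilon_{i}X_{i}$ with $\varepsilon_{i}$ sub-Gaussian and $\mathrm{vec}(X_{i})\sim\mathcal{N}(0,\Sigma)$: with probability at least $1-2e^{-4d}$ (this is the event $\mathcal{E}_{2}$ giving the second term in the stated probability),
\begin{equation*}
\Bigl\|\tfrac1n\textstyle\sum_{i=1}^{n}\varepsilon_{i}X_{i}\Bigr\|_{\mathrm{op}}^{2}\;\leq\;C\,\sigma^{2}\zeta(\Sigma)\,\frac{d}{n},
\end{equation*}
for a universal constant $C$; I would select constants so that $36Cs\leq 7200$, i.e.\ $C=200$. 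Combining these two displays yields $36\|\pj(\nabla f(\Theta^{*}))\|_F^{2}/\bar\mu^{2}\leq 7200\,\sigma^{2}\zeta(\Sigma)\,sd/(n\bar\mu^{2})$ on $\mathcal{E}_{1}\cap\mathcal{E}_{2}$.

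Finally, to conclude, I would plug this bound into both conclusions of Theorem~\ref{thm:1_f}. The contraction regime of Theorem~\ref{thm:1_f} triggers as long as $\|\Theta_{t}-\Theta^{*}\|_F^{2}$ exceeds the abstract floor, and a fortiori as long as it exceeds $7200\,\sigma^{2}\zeta(\Sigma)\,sd/(n\bar\mu^{2})$, giving part~(i). Part~(ii) follows from the second conclusion of Theorem~\ref{thm:1_f} after substituting the same upper bound on the gradient projection term. A union bound over $\mathcal{E}_{1}^{c}$ and $\mathcal{E}_{2}^{c}$ produces the claimed probability $1-e^{-c_{0}n}-2e^{-4d}$.

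The main obstacle is the operator-norm concentration bound for $\tfrac1n\sum_{i}\varepsilon_{i}X_{i}$: unlike the vector-valued quantities in sparse logistic regression, we need a matrix concentration inequality whose leading term behaves as $\sigma\sqrt{\zeta(\Sigma)\,d/n}$ rather than $\sqrt{\log d/n}$, and the factor of $d$ (not $\log d$) is what forces the rank-sparsity level $s$ to multiply $d$ in the final floor. Everything else is a mechanical substitution into Theorem~\ref{thm:1_f}, once one verifies that the statement and proof of that theorem carry over verbatim when $(\hs,\|\cdot\|)$ is replaced by $(\pj,\|\cdot\|_F)$ — this is standard since the matrix analogue of hard thresholding obeys the same $3$-approximation inequality that drives the proof.
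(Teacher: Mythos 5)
Your proposal is correct and follows essentially the same route as the paper: invoke Lemma~\ref{lm:mr} for the RSS/RSC parameters, apply the matrix version of Theorem~\ref{thm:1_f} (justified via the matrix analogue of the hard-thresholding lemma, \cite[Lemma 2]{jain}), bound $\|\pj(\nabla f(\Theta^*))\|_F^2 \leq s\,\|\tfrac1n\sum_i \varepsilon_i X_i\|_{\mathrm{op}}^2$, and control the operator norm by the standard concentration bound (\cite[Corollary 10.10]{hd} with $\delta=\sqrt{2d/n}$, giving exactly the constant $200$ and hence the floor $7200\,\sigma^2\zeta(\Sigma)sd/(n\bar\mu^2)$), finishing with a union bound.
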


The proof of Corollary~\ref{coro:mat} can be found in Appendix~\ref{sec:coro:mat}.

\begin{remark}  
    The result in Corollary~\ref{coro:snr} also extends to Algorithm~\ref{algo:iht_ada}. If \( \theta^* \) satisfies the SNR condition \eqref{eq:snr}, the iterates of Algorithm~\ref{algo:iht_ada} recover the support after \( \widetilde{T} \) gradient descent steps. 
\end{remark}

\vspace{-5pt}
\section{Sketch of the Proof for Theorem~\ref{thm:1_f} and Corollary~\ref{coro:snr}}
\label{sec:sketch}
\vspace{-5pt}
We provide a sketch of the proof of the main results of the paper. We refer the reader to the appendix for the complete proof. The proof of Theorem~\ref{thm:1_f} follows the outline: \textbf{(i)} study the behavior of $\|\theta_{t+1}  - \widehat{\theta}\|^2$ given $\gamma_t$ and $\|\theta_t - \widehat{\theta}\|,$ \textbf{(ii)}  establish that under the assumption that $\gamma_t$ is sufficiently large the expansive effect of the Hard Thresholding operator can be offset by the contractive effect of the gradient update, \textbf{(iii)} show that $\gamma_t$ is sufficiently large until we reach optimal statistical precision. Both to finalize the proof of Theorem~\ref{thm:1_f} and to establish Corollary~\ref{coro:snr}: \textbf{(iv)} the iterates remain confined within a neighborhood of $\theta^*$, and, given that the SNR condition holds, the support can be identified, providing further benefits to the algorithm's performance. We elaborate on points \textbf{(i)} through \textbf{(iv)}.

\textbf{(i)} To understand the dynamics of $\|\theta_t - \widehat{\theta}\|^2$ we analyze the combined effect of gradient descent under the RSC and RSS and the Hard Thresholding operator.

For this we  suitably apply Lemma 1 from \cite{jain},
exploit the RSC, and the properties of $\theta_{t+1}$ in relation to $\mathrm{HT}_s$ to yield:
\begin{subequations}
\label{eq:est_error}
\begin{align}
 \|\theta_{t+1} - \theta^{*}\|^2 \leq & \left(1 + \sqrt{\frac{s^{*}}{s}} \right)^2 \left(\left(1 - \bar{\mu}\gamma_t \right)\|\theta_t - \theta^{*}\|^2 \right. \label{eq:contract}\\
 & \left.- 2 \gamma_t \left( f(\theta_t) - f(\theta^{*})  \right) + 10 \gamma_t^2 \left\|\text{HT}_{s}(\nabla f(\theta_t)) \right\|^2\right). \label{eq:selection} 
\end{align}
\end{subequations}

Setting $\gamma_t = \frac{f(\theta_t) - f(\theta^{*})}{5\|\mathrm{HT}_{s}(\nabla f(\theta_t))\|^2}$ makes \eqref{eq:selection} nonpositive and yields a choice that is invariant with $d.$ The use of $\|\mathrm{HT}_{s}(\nabla f(\theta_t))\|^2$ in $\gamma_t$ and the RSS are critical to avoiding a step-size that scales with the Lipschitz smoothness constant (which scales as $\mathcal{O}(d)$ in the high-dimensional setting). Setting $\gamma_t = 1/(40\bar{L})$ in \eqref{eq:est_error}  recovers the result in \cite{jain}, which establishes that IHT requires at most $\mathcal{O}(\bar{\kappa}^{-1}\log(1/\varepsilon))$ to be within $\varepsilon-$accuracy of near optimal statistical precision.

\textbf{(ii)} To achieve the optimal linear rate under our choice of step size, we require that
\begin{align}\label{eq:rate}
    \left(1 + \sqrt{\frac{s^{*}}{s}} \right)^2 \left(1 - \bar{\mu}\gamma_t \right) \leq 1 - c_0 \frac{\bar{\mu}}{\bar{L}}.
\end{align}
Sufficient conditions for \eqref{eq:rate} are $s \geq c_1 \bar{\kappa}^2 s^{*}$ and $\gamma_t \geq \frac{c_2}{L}$ for some universal constants $c_0,\,c_1$ and $c_2.$ However, if $\gamma_t$ were to scale with the Lipschitz smoothness constant, i.e.  with $d^{-1}$  we would require $s \geq d^2 s^{*}$ to establish linear convergence. Observe that this requirement can not be fulfilled as $s \leq d.$

\textbf{(iii)} We exploit the RSS and RSC to show that  $\gamma_t \geq \frac{1}{40\bar{L}}$
 when $\| \theta_{t} - \widehat{\theta}  \|^{2} \geq \frac{36 \|  \nabla f(\widehat{\theta}) \|_{s}^{2}}{\bar{\mu}^{2}} .$ If, further, \( s \geq (240 \bar{\kappa})^2 s^* \),  \eqref{eq:rate} holds with $c_0 = 1/160,$ yielding $\|\theta_{t+1} - \widehat{\theta}\|^2 \leq \left(1  - c_0 \bar{\kappa}^{-1} \right)\|\theta_t - \widehat{\theta}\|^2.$

The condition \( \| \theta_{t} - \widehat{\theta} \|^{2} \geq \frac{36 \| \mathrm{HT}_s(\nabla f(\widehat{\theta})) \|^{2}}{\bar{\mu}^{2}} \) stems from \eqref{eq:l0} being a constrained problem. If the Polyak step-size had been left unaltered, additional regularity conditions are required to establish convergence in the constrained case. As established in Theorem 3 in \cite{polyak}, one such condition is \( \frac{f(\theta_t) - f(\widehat{\theta})}{\|\theta_t - \widehat{\theta}\|} \geq c \) for some \( c > 0 \) and any \( \theta_t \in \mathbb{R}^d \), which does not uniformly in high-dimensional M-estimation. For some GLMs, we can show the condition holds locally and exploit this fact to provide more general results in Appendix~\ref{sec:glm}.

\textbf{(iv)} From \textbf{(i)}-\textbf{(iii)} it follows that there exists \( t_0 \) at which $\|\theta_{t_0}-\widehat{\theta}\|^2 \leq \frac{36\|\mathrm{HT}_{s}(\nabla f(\widehat{\theta}))\|^2}{\bar{\mu}^2}$. Since the potential expansion is at most \( \left( 1+ \frac{1}{80\bar{\kappa}} \right) \) Theorem~\ref{thm:1_f} follows. 
If \( \widehat{\theta} \) satisfies \eqref{eq:snr}, we show that the inequality \( \| \theta_{t_0} - \widehat{\theta} \|^{2} \leq \frac{36 \|\mathrm{HT}_s(\nabla f(\widehat{\theta}))\|{2}}{\bar{\mu}^{2}} \) guarantees that  \(\widehat{\mathcal{S}} \subset \mathcal{S}_{t_0}\). From here, we establish that, this results in two possible scenarios: \textbf{(a)} \(\gamma_t < \frac{1}{40 \bar{L}}\), implying \(\widehat{\mathcal{S}} \subset \mathcal{S}_{t_0+1}\), ensuring that \( \| [\tilde{\theta}_{t_0+1} - \widehat{\theta} ]_{\widehat{\mathcal{S}}_{t+1}}\|^2 = \|\tilde{\theta}_{t_0+1} - \widehat{\theta}\|^2 \), and eliminating the expansion term in \eqref{eq:est_error}; or, \textbf{(b)} \(\gamma_t \geq \frac{1}{40 \bar{L}}\), and \eqref{eq:rate} holds. Based on \textbf{(a)} and \textbf{(b)}, Corollary~\ref{coro:snr} is established by induction.

\section{Numerical experiments}
\label{sec:numerical}
\vspace{-5pt}
\begin{figure}[t] 
    \centering
    \label{fig:collection}
    \subfigure{\scalebox{0.9}{
%
%
\definecolor{mycolor1}{rgb}{0.00000,0.44700,0.74100}%
\definecolor{mycolor2}{rgb}{0.85000,0.32500,0.09800}%
\begin{tikzpicture}

\begin{axis}[%
width=1.3in,
height=1in,
at={(0,0.642in)},
scale only axis,
xmin=0,
xmax=60,
xlabel style={font=\color{white!15!black}},
xlabel={Iteration $t$},
ymin=0,
ymax=140,
ylabel style={font=\color{white!15!black}},
ylabel={$f(\theta_t) - f(\theta^*)$},
axis background/.style={fill=white},
title style={font=\bfseries},
title={Linear Regression},
legend style={legend cell align=left, align=left, draw=white!15!black, font = \small}
]
\addplot [color=mycolor1, line width=2.0pt]
  table[row sep=crcr]{%
1	132.150422247927\\
2	98.0042270213871\\
3	75.5206570420553\\
4	60.2506992743789\\
5	49.5236995404102\\
6	41.7195275637386\\
7	35.8429926804696\\
8	31.2473940338591\\
9	27.5487858733787\\
10	24.4787254183111\\
11	21.8720669488945\\
12	19.6278380123302\\
13	17.6550437523488\\
14	15.9146819436169\\
15	14.3734140725011\\
16	13.0115842235605\\
17	11.7970146822799\\
18	10.70579343299\\
19	9.72339947848174\\
20	8.84664380995677\\
21	8.05906773830745\\
22	7.35491190817103\\
23	6.71603008219622\\
24	6.13905798675526\\
25	5.62140147531033\\
26	5.15356032539206\\
27	4.73102974165138\\
28	4.34751166873877\\
29	3.9991713171615\\
30	3.68262869041185\\
31	3.3936289121187\\
32	3.13060816408498\\
33	2.88950303036986\\
34	2.66946582850546\\
35	2.46786685718416\\
36	2.2837576613023\\
37	2.11487651393815\\
38	1.95995701905142\\
39	1.81733184063978\\
40	1.68673457292869\\
41	1.56677450477649\\
42	1.45655090079737\\
43	1.35518665186236\\
44	1.26181239385157\\
45	1.17597263967136\\
46	1.09680203538084\\
47	1.0238558877136\\
48	0.956663404373303\\
49	0.894592477655615\\
50	0.837344903624907\\
51	0.784498313389191\\
52	0.735748630165924\\
53	0.690741420634038\\
54	0.649036805617598\\
55	0.610549827781232\\
56	0.574918252191953\\
57	0.541991401521148\\
58	0.511545040477349\\
59	0.483379829197035\\
60	0.457314345744512\\
61	0.433183137602361\\
62	0.410835053917031\\
63	0.390131802869048\\
64	0.370897265402778\\
65	0.353044226912111\\
66	0.336514248624883\\
67	0.32117732419986\\
68	0.306965594259658\\
69	0.293771363576791\\
70	0.281541074124455\\
71	0.270197311974029\\
72	0.259642459843311\\
73	0.249841805342128\\
74	0.240723827652633\\
75	0.23227271921274\\
76	0.224431314406224\\
77	0.217153191613694\\
78	0.210382449735329\\
79	0.204088002141607\\
80	0.198245507307353\\
81	0.192819681032869\\
82	0.187769425467192\\
83	0.183079273051713\\
84	0.178711451703015\\
85	0.174645673385156\\
86	0.170860059753017\\
87	0.167326262618709\\
88	0.164038561351788\\
89	0.160970843895759\\
90	0.158116072529736\\
91	0.155453383802066\\
92	0.152967156340198\\
93	0.150662626304004\\
94	0.148523294233241\\
95	0.146536440353582\\
96	0.144690476170155\\
97	0.142974815768775\\
98	0.141372732763723\\
99	0.139877722699929\\
100	0.138482860999678\\
};
\addlegendentry{$\gamma_t = \frac{2}{3\bar{L}}$}

\addplot [color=mycolor2, line width=2.0pt]
  table[row sep=crcr]{%
1	124.639839471136\\
2	84.4256620466199\\
3	57.9140365865676\\
4	40.4668611416297\\
5	28.634822480966\\
6	20.3273468012902\\
7	14.4013914365633\\
8	10.1236965337363\\
9	7.10374420193268\\
10	5.00052189678386\\
11	3.53294568203445\\
12	2.49939979334657\\
13	1.77467824466285\\
14	1.26401259598551\\
15	0.902921224614284\\
16	0.647479540270415\\
17	0.467327294215348\\
18	0.34046557085696\\
19	0.25179024027483\\
20	0.190765131307412\\
21	0.150503292383885\\
22	0.129941713411374\\
23	0.132772285135837\\
24	0.121661350956735\\
25	0.138111200151241\\
26	0.117015608039663\\
27	0.14087225487595\\
28	0.119482485503878\\
29	0.130217033298862\\
30	0.118862154118407\\
31	0.135492162384935\\
32	0.117072973781563\\
33	0.143251126329498\\
34	0.118677052228743\\
35	0.133638261730758\\
36	0.11746351094877\\
37	0.139149741049284\\
38	0.117663427257137\\
39	0.136806084732183\\
40	0.117928772247972\\
41	0.1346086532659\\
42	0.117933896394097\\
43	0.139246038598954\\
44	0.117903476502693\\
45	0.135615233645834\\
46	0.117670170355716\\
47	0.139815157091168\\
48	0.117720266329846\\
49	0.135195782859667\\
50	0.117756349342778\\
51	0.138165483617834\\
52	0.117419031082603\\
53	0.137108932689204\\
54	0.118073813730514\\
55	0.136023302147195\\
56	0.117630554220081\\
57	0.13846636856428\\
58	0.11784561073455\\
59	0.135876449842646\\
60	0.11752516607993\\
61	0.138819746452972\\
62	0.117672723715661\\
63	0.136450496038358\\
64	0.117832750470329\\
65	0.138582536891159\\
66	0.1178197985095\\
67	0.136599332351374\\
68	0.118001869559078\\
69	0.13821094157159\\
70	0.117680452370532\\
71	0.137447108374342\\
72	0.118151861260716\\
73	0.136916432034992\\
74	0.117525568250556\\
75	0.139787691861244\\
76	0.118336899755116\\
77	0.135323160879226\\
78	0.117677336150775\\
79	0.139765484549876\\
80	0.118055830642046\\
81	0.134841670582205\\
82	0.11758827657012\\
83	0.138809793420914\\
84	0.118076633165735\\
85	0.135580202602702\\
86	0.117994163036474\\
87	0.137567004820946\\
88	0.117635297474608\\
89	0.137985558931028\\
90	0.118189557712511\\
91	0.137051182696865\\
92	0.117700690238389\\
93	0.138298283897499\\
94	0.118087435341186\\
95	0.137207655607391\\
96	0.117958144844096\\
97	0.136976856855303\\
98	0.117924893764292\\
99	0.138865071395897\\
100	0.118118746698346\\
};
\addlegendentry{Algorithm~\ref{algo:iht}}

\end{axis}

\end{tikzpicture}
    \subfigure{\scalebox{0.9}{
%
%
\definecolor{mycolor1}{rgb}{0.00000,0.44700,0.74100}%
\definecolor{mycolor2}{rgb}{0.85000,0.32500,0.09800}%
\begin{tikzpicture}

\begin{axis}[%
width=1.3in,
height=1in,
at={(0.011in,0.642in)},
scale only axis,
xmin=0,
xmax=50,
xlabel style={font=\color{white!15!black}},
xlabel={Iteration $t$},
ymin=0.2,
ymax=0.65,
ylabel style={font=\color{white!15!black}},
axis background/.style={fill=white},
title style={font=\bfseries},
title={Logistic Regression},
legend style={legend cell align=left, align=left, draw=white!15!black, font = \small}
]
\addplot [color=mycolor1, line width=2.0pt]
  table[row sep=crcr]{%
1	0.612416211847475\\
2	0.556616643643083\\
3	0.516223436206048\\
4	0.485561398255856\\
5	0.461327128817363\\
6	0.441492422673636\\
7	0.424868693065264\\
8	0.410641445269116\\
9	0.398257801367914\\
10	0.387284333579928\\
11	0.377489666816735\\
12	0.368658943102687\\
13	0.360632086140422\\
14	0.353240084375346\\
15	0.346437723005578\\
16	0.34006884480971\\
17	0.334119638560843\\
18	0.328569893511414\\
19	0.323370240292814\\
20	0.318419857127951\\
21	0.313734033297314\\
22	0.309311237136431\\
23	0.305123536713706\\
24	0.301093661187079\\
25	0.297272493600108\\
26	0.293609067050274\\
27	0.29007084128371\\
28	0.286656596927367\\
29	0.283377472858111\\
30	0.280240218588138\\
31	0.277232839066201\\
32	0.274326592363927\\
33	0.27151829539985\\
34	0.268805019227607\\
35	0.266180637145867\\
36	0.263626276478102\\
37	0.261129028276459\\
38	0.258713366254553\\
39	0.256372683456755\\
40	0.254104121936565\\
41	0.251895548691969\\
42	0.24974482491469\\
43	0.247666867206753\\
44	0.245646815854172\\
45	0.243676343198613\\
46	0.241761406393085\\
47	0.239906282439235\\
48	0.238099220615089\\
49	0.236331240193056\\
50	0.234616828650963\\
};
\addlegendentry{$\gamma_t = \frac{2}{3\bar{L}}$}

\addplot [color=mycolor2, line width=2.0pt]
  table[row sep=crcr]{%
1	0.544036062569893\\
2	0.444865453279017\\
3	0.377914379366043\\
4	0.332151795031347\\
5	0.300511633104478\\
6	0.277987149967606\\
7	0.261581041528735\\
8	0.249392881265551\\
9	0.240109693911726\\
10	0.232919845153523\\
11	0.227251040204501\\
12	0.222715487372785\\
13	0.21904268250761\\
14	0.216038766998309\\
15	0.21356137375428\\
16	0.211503976199616\\
17	0.209785325850091\\
18	0.208342506933829\\
19	0.207126158734567\\
20	0.206090938263445\\
21	0.205214200398629\\
22	0.204468946036748\\
23	0.203834045808179\\
24	0.203292130247216\\
25	0.202828827344255\\
26	0.202432181466151\\
27	0.202092198492746\\
28	0.201800486651197\\
29	0.201549973055531\\
30	0.201334676632981\\
31	0.201149526614438\\
32	0.200990213300358\\
33	0.200853065221467\\
34	0.200734949500394\\
35	0.200633188572515\\
36	0.200545490891444\\
37	0.20046989286285\\
38	0.200404710165485\\
39	0.200348496751621\\
40	0.20030001018253\\
41	0.200258182215277\\
42	0.200222093864131\\
43	0.200190954134743\\
44	0.200164081907138\\
45	0.20014089045623\\
46	0.200120874205418\\
47	0.200103597380697\\
48	0.200088684282134\\
49	0.200075810934363\\
50	0.200064697914775\\
};
\addlegendentry{Algorithm~\ref{algo:iht}}

\end{axis}

\end{tikzpicture}
    \subfigure{\scalebox{0.9}{
%
%
\definecolor{mycolor1}{rgb}{0.00000,0.44700,0.74100}%
\definecolor{mycolor2}{rgb}{0.85000,0.32500,0.09800}%
\definecolor{mycolor3}{rgb}{0.92900,0.69400,0.12500}%
\begin{tikzpicture}

\begin{axis}[%
width=1.3in,
height=1in,
at={(0.011in,0.642in)},
scale only axis,
xmin=0,
xmax=50,
xlabel style={font=\color{white!15!black}},
xlabel={Iteration $t$},
ymin=0.1,
ymax=0.7,
ylabel style={font=\color{white!15!black}},
ylabel={},
axis background/.style={fill=white},
title style={font=\bfseries},
title={Logistic Regression},
legend style={legend cell align=left, align=left, draw=white!15!black, font = \tiny}
]
\addplot [color=mycolor1, line width=2.0pt]
  table[row sep=crcr]{%
1	0.566842329940631\\
2	0.472266979476111\\
3	0.403042723014735\\
4	0.352834671843711\\
5	0.313812279801443\\
6	0.282915620900585\\
7	0.257852021678702\\
8	0.237170753892235\\
9	0.220093018940466\\
10	0.206098669888176\\
11	0.194733178569769\\
12	0.185502615552024\\
13	0.177851591867968\\
14	0.171496510194455\\
15	0.166135724629419\\
16	0.161521371216977\\
17	0.157598449720518\\
18	0.154163817779298\\
19	0.151115356291656\\
20	0.148384870507334\\
21	0.145964826915423\\
22	0.143756184311643\\
23	0.141771802496568\\
24	0.139976842414132\\
25	0.13834200347316\\
26	0.13684427878736\\
27	0.135465487038909\\
28	0.134190735346118\\
29	0.133007525115563\\
30	0.131905438275731\\
31	0.130875718623169\\
32	0.129875188864743\\
33	0.128906170774833\\
34	0.128004575444615\\
35	0.127162776520387\\
36	0.126373176445758\\
37	0.125629641114631\\
38	0.124927215853523\\
39	0.124261835547245\\
40	0.123630115179889\\
41	0.123029244139497\\
42	0.122456691340865\\
43	0.121910214514223\\
44	0.121387845255443\\
45	0.120887836205205\\
46	0.120408630454237\\
47	0.119948837535206\\
48	0.119507203796615\\
49	0.119082582944065\\
50	0.118673931203791\\
};
\addlegendentry{$\widehat{f} = 0.1$}

\addplot [color=mycolor2, line width=2.0pt]
  table[row sep=crcr]{%
1	0.594740869686231\\
2	0.519849883348304\\
3	0.463910239264595\\
4	0.422789423565032\\
5	0.392486783159153\\
6	0.369631703259152\\
7	0.35184896180156\\
8	0.337719356463435\\
9	0.326350577260747\\
10	0.317122161429209\\
11	0.309436772485619\\
12	0.302943497715141\\
13	0.297462843033961\\
14	0.292783254194865\\
15	0.28874962730334\\
16	0.285188087620048\\
17	0.282074585449527\\
18	0.279331848052748\\
19	0.276900884584751\\
20	0.274734648775244\\
21	0.272752406733869\\
22	0.270979060590843\\
23	0.269380362494503\\
24	0.267933978351744\\
25	0.266621193619494\\
26	0.265398075540914\\
27	0.264286802353093\\
28	0.263271703814395\\
29	0.262342285067466\\
30	0.261489513722816\\
31	0.26070554630392\\
32	0.259983540195841\\
33	0.259317492469496\\
34	0.258702132414389\\
35	0.258132808699756\\
36	0.257605399074329\\
37	0.257116230380012\\
38	0.256662031756145\\
39	0.256239871228753\\
40	0.25584711380654\\
41	0.255481389630152\\
42	0.255140559792165\\
43	0.254822687826126\\
44	0.254526015854285\\
45	0.254248946774188\\
46	0.253990025043951\\
47	0.253747922173453\\
48	0.253521422610244\\
49	0.25330941327748\\
50	0.253110873628668\\
};
\addlegendentry{$\widehat{f} = 0.25$}

\addplot [color=mycolor3, line width=2.0pt]
  table[row sep=crcr]{%
1	0.604628839093648\\
2	0.537013166974994\\
3	0.486116844691417\\
4	0.448191871404845\\
5	0.420014246606973\\
6	0.398767378557191\\
7	0.382494790257118\\
8	0.369670300474544\\
9	0.35945480532124\\
10	0.35117882744293\\
11	0.344378592467772\\
12	0.338722028128623\\
13	0.33396669452256\\
14	0.329931653243995\\
15	0.326479950589204\\
16	0.323506499825186\\
17	0.320929040930838\\
18	0.318682734982726\\
19	0.316715713793562\\
20	0.314985956128409\\
21	0.31345911897139\\
22	0.312106889398019\\
23	0.310905755153904\\
24	0.309836021529071\\
25	0.308881084630139\\
26	0.308026847129128\\
27	0.307261262579513\\
28	0.306573971341442\\
29	0.305956025868243\\
30	0.305399667856131\\
31	0.304898148355899\\
32	0.304445560995762\\
33	0.304036721246792\\
34	0.303667063315523\\
35	0.303332557295379\\
36	0.303029635118228\\
37	0.302755131044741\\
38	0.302506226896752\\
39	0.30228041047011\\
40	0.302075436914133\\
41	0.301889297341348\\
42	0.301720191204631\\
43	0.301566501978005\\
44	0.301426776288391\\
45	0.301299705917581\\
46	0.301184112052443\\
47	0.301078931115955\\
48	0.30098320266594\\
49	0.300896058645414\\
50	0.300816713857418\\
};
\addlegendentry{$\widehat{f} = 0.3$}

\end{axis}

\end{tikzpicture}
    \vspace{-10pt}
    \caption{\textbf{Left and center: } IHT with  \( \frac{2}{3 \bar{L}} \) (blue) vs. Algorithm~\ref{algo:iht} (red) on linear and logistic regression respectively. \textbf{Right:} Choice of $\widehat{f}$ on Algorithm~\ref{algo:iht}. In all scenarios $\alpha= 5,\, d = 5000$ and $ s = 700.$}
    \vspace{-10pt}
\end{figure}
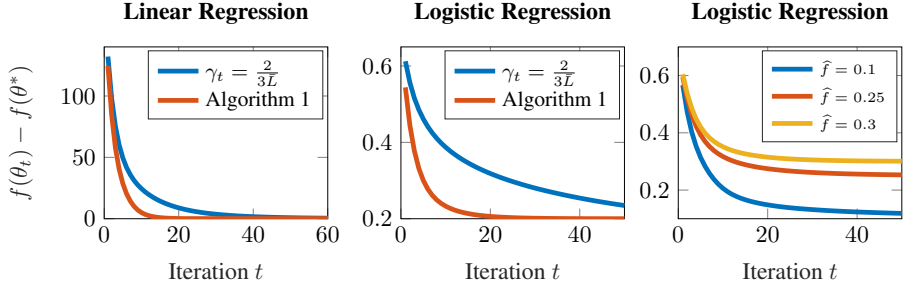
We first consider sparse linear regression and sparse logistic regression on synthetic data. This is done to illustrate the algorithm's performance as the size of the problem grows while the problem conditioning is kept the same. In all scenarios that rely on synthetic data, we set $d \in \{5000, 10000, 20000\}$, and $s^* = 300$. The design matrix $X \in \mathbb{R}^{d \times n}$ is generated to reflect a time-series structure with a correlation parameter $\omega = 0.5$. We set the sample size $n$ according to $n = \lceil \alpha s \log d \rceil$, where $\alpha > 0$ is a constant. In this section, we use $\alpha = 5$. For each column index $j \in \{1, \dots, n\}$, we generate a sequence of i.i.d. standard normal variables $\varepsilon_1, \dots, \varepsilon_{d-1}$, and construct $x_{1,j} = \varepsilon_1 / \sqrt{1 - \omega^2}$. The subsequent entries are generated recursively as $x_{t+1,j} = \omega x_{t,j} + \varepsilon_t$ for $t \in \{1, \dots, d-1\}$, where $\varepsilon_t \sim \mathcal{N}(0, 1)$. The true parameter $\theta^*$ is created by sampling each entry from $\mathcal{N}(0, 1)$, and assigning nonzero values to $s^*$ randomly chosen entries. 

 In the case of linear regression, each sample \( i \) (where \( i \in \{1, \dots, n\} \)) is generated according to the model:  
 \vspace{-5pt}
\[
    y_i = x_i^T \theta^* + w_i, \quad w_i \sim \mathcal{N}(0, 0.25).
\]  
\vspace{-3pt}
For logistic regression, the relationship between \( y_i \) and \( x_i \) follows the model \eqref{eq:lg}.

\textbf{(i) Comparison to fixed step-size:} In Figure~\ref{fig:collection} we present a comparison between IHT with a fixed step size (blue) and the adaptive step size used in Algorithm~\ref{algo:iht} (red) when solving linear regression (left panel) and logistic regression (center panel) respectively. When working with a constant step-size, we set the step size to  \( \frac{2}{3\bar{L}} \) following \cite{jain} for both linear and logistic regression. $\bar{L}$ for linear regression can be upper bounded as  \( \lambda_{\max}(\Sigma)(3 + \frac{2(2s+s^*)}{s \alpha}) \) \cite[Appendix D.1]{loh}, whereas $\bar{L}$ for logistic regression is one fourth of that of linear regression. In both settings, $\lambda_{\max}(\Sigma) \leq  \frac{2}{(1 - \omega)^2 (1 + \omega)} $ \cite{fast}.
Although both step size strategies share the same theoretical guarantees, we observe that the adaptive step size speeds-up convergence. This advantage arises because an adaptive step size can adapt to the local curvature and therefore be significantly more aggressive, allowing for faster progress towards the solution. As expected, this effect is more pronounced when solving logistic regression where the functions' curvature will depend on the point. 

\vspace{-4pt}

\textbf{(ii) Rate invariance:} In Figure~\ref{fig:classic_compare} (c.f. Section~\ref{sec:intro}), we compare the classical Polyak step size with the step size employed in Algorithm~\ref{algo:iht} across different problem dimensions \( d \), while maintaining $\alpha = 5.$  The solid line represents the performance of Algorithm~\ref{algo:iht}, whereas the dashed line corresponds to the classical Polyak step size. This demonstrates that when the condition number of $\Sigma$ remains unchanged, the complexity of the method remains almost identical under our chosen step size. In contrast, Polyak’s step size leads to an increased number of iterations, even if the achievable statistical precision and  $(\lambda_{\max}(\Sigma)/\lambda_{\min}(\Sigma))$ remain the same.

\vspace{-2pt}
\textbf{(iii) Choice of $\widehat{f}:$} Finally, Figure~\ref{fig:collection} (right) highlights the impact of the choice of \( \widehat{f} \). The results confirm that \( \widehat{f} \) determines the best achievable accuracy. Additionally, from the formulation of \( \gamma_t \), we observe that \( \widehat{f} \) directly influences the step size magnitude, thus is impacting the convergence rate.

Experiments on real world data are provided in Appendix~\ref{sec:real_data}. All experiments were conducted on a laptop equipped with 16 GB of RAM and a 12th Gen Intel Core i5-12500H 3.10 GHz CPU.

\clearpage


\newpage
\appendix
\part*{Appendix} 

\noindent
\textbf{A\quad Main Theorems} \dotfill \pageref{sec:proofs} \\
\hspace{1em}A.1\quad Proof of Theorem 1\dotfill \pageref{sec:proof:main_thm}\\
\hspace{1em}A.2\quad Proof of Corollary 1\dotfill \pageref{sec:proof:snr}\\
\hspace{1em}A.3\quad Proof of Corollary 2 \dotfill \pageref{appendix:coro:lg} \\
\hspace{1em}A.4\quad Proof of Corollary 3 \dotfill \pageref{sec:coro:mat}\\
\textbf{B\quad Adaptive Lower Bound} \dotfill \pageref{sec:ada} \\
\hspace{1em}B.1\quad Proof of Theorem 2\dotfill  \pageref{sec:ada:proof}\\
\textbf{C\quad Other Statistical Guarantees} \dotfill \pageref{sec:stat_guarantees} \\
\hspace{1em}C.1\quad Sparse Linear Regression\dotfill  \pageref{sec:linear}\\
\hspace{1em}C.2\quad Generalized Linear Models\dotfill  \pageref{sec:glm}\\
\textbf{D\quad Experiments on real data} \dotfill \pageref{sec:real_data} \\
\hspace{1em}D.1\quad Linear Regression\dotfill  \pageref{sec:real_data}\\
\hspace{1em}D.2\quad Logistic Regression\dotfill  \pageref{sec:real_data}

\section{Main Theorems}\label{sec:proofs}

In this section we provide the formal proof to the statements included in the main body of the paper. To formally establish Theorem~\ref{thm:1_f} we build on \cite[Lemma 1]{jain} and Lemmas~\ref{lemma:base}-\ref{lemma:step_size}. \cite[Lemma 1]{jain} allows us to control the expansive properties of the Hard Thresholding operator, whereas Lemma~\ref{lemma:base} allows us to establish \eqref{eq:est_error} (c.f. Section~\ref{sec:sketch}). Further, in Lemma~\ref{lemma:smooth} we establish consequences of the RSS (Assumption~\ref{asp:rsmooth}) that are instrumental in establishing a lower bound on the step-size $\gamma_t.$ We lower bound $\gamma_t$ in Lemma~\ref{lemma:step_size} and establish conditions under which this lower bound holds. We combine these results in the proof of Theorem~\ref{thm:1_f} in Appendix~\ref{sec:proof:main_thm}, followed by a formal proof of the corollaries included in the main body of the paper in the remaining sections of Appendix~\ref{sec:proofs}.

For simplicity, we let $\widehat{\mathcal{S}}_{t} = \mathcal{S}_{t} \cup \widehat{\mathcal{S}} .$  Also, we let $g_t = \nabla f(\theta_t), $ and $\widehat{g} = \nabla f(\widehat{\theta}). $ When discussing specific statistical models we denote by $\theta^*$ the ground truth,  $g^{*} = \nabla f(\theta^{*})$ and $\mathcal{S}^* = \text{supp}(\theta^*).$ For any index set $\mathcal{S}$ and vector $\theta \in \mathbb{R}^d$, we define $[\theta]_{\mathcal{S}}$ as the vector that retains the entries indexed by $\mathcal{S}$, while setting all other entries to zero.

We include the following fundamental lemma, which plays a key role in our analysis. It corresponds to Lemma~1 in \cite{jain}, and is presented here for completeness.
\begin{lemma}\label{lemma:jain}
    For any index set $I $, any $z \in \mathbb{R}^{I} $, let $\theta = \hs(z) $. Then for any $\widehat{\theta} \in \mathbb{R}^{I} $ such that $\| \widehat{\theta} \|_{0} \leq s^{*} $, we have 
    \[
    \| \theta - z \|^{2} \leq \frac{\left| I \right| - s}{\left| I \right| - s^{*}} \| \widehat{\theta} - z \|^{2}.
    \]
\end{lemma}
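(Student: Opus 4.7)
The plan is to reduce the bound to a purely combinatorial statement about how the magnitudes of $z$ distribute over the support sets. Write $S = \operatorname{supp}(\theta)$, so $|S| = s$ and $S$ contains the $s$ largest-magnitude coordinates of $z$. By definition of $\hs$, $\theta$ agrees with $z$ on $S$ and vanishes off $S$, so $\|\theta - z\|^2 = \sum_{i \in I \setminus S} z_i^2$. For $\widehat{\theta}$ with support $\widehat{S}$ of size at most $s^{*}$, dropping the nonnegative contribution coming from the indices in $\widehat{S}$ gives $\|\widehat{\theta} - z\|^2 \geq \sum_{i \in I \setminus \widehat{S}} z_i^2$. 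Thus the stated inequality is implied by
\[
\sum_{i \in I \setminus S} z_i^2 \;\leq\; \frac{|I| - s}{|I| - s^{*}} \sum_{i \in I \setminus \widehat{S}} z_i^2.
\]

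Next, I would minimize the right-hand side over the choice of $\widehat{S}$: since removing the indices carrying the largest $z_i^2$ leaves the smallest residual sum, the minimum over all $\widehat{S}$ with $|\widehat{S}| \leq s^{*}$ is attained at $\widehat{S} = S^{*}$, the set of the top $s^{*}$ magnitudes of $z$. Because $s^{*} \leq s$, the tie-breaking rule used by $\hs$ lets me take $S^{*} \subseteq S$, giving the disjoint decomposition $I \setminus S^{*} = (I \setminus S) \cup (S \setminus S^{*})$, and hence
\[
\sum_{i \in I \setminus S^{*}} z_i^2 \;=\; \sum_{i \in I \setminus S} z_i^2 \;+\; \sum_{i \in S \setminus S^{*}} z_i^2.
\]

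The crux is to compare these two pieces. Every index $i \in S \setminus S^{*}$ satisfies $z_i^2 \geq z_j^2$ for every $j \in I \setminus S$, because $S$ consists of the top $s$ magnitudes of $z$. With $|S \setminus S^{*}| = s - s^{*}$ and $|I \setminus S| = |I| - s$, bounding each term on the left-hand side below by the average of the right-hand terms yields
\[
\sum_{i \in S \setminus S^{*}} z_i^2 \;\geq\; \frac{s - s^{*}}{|I| - s} \sum_{i \in I \setminus S} z_i^2.
\]
Adding $\sum_{i \in I \setminus S} z_i^2$ to both sides produces $\sum_{i \in I \setminus S^{*}} z_i^2 \geq \tfrac{|I| - s^{*}}{|I| - s} \sum_{i \in I \setminus S} z_i^2$, which rearranges to the claim after chaining with the two earlier inequalities.

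I do not expect a serious obstacle here; the statement is essentially a counting identity for the hard-thresholding residual. The only subtlety worth stressing is the tie-breaking convention that ensures $S^{*} \subseteq S$, so that the decomposition of $I \setminus S^{*}$ is clean, and the elementary ``every element is at least the mean'' step that powers the final comparison.
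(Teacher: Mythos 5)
Your proof is correct. The paper itself does not prove this lemma---it imports it verbatim as Lemma~1 of \cite{jain}---and your argument (reduce to the top-$s^*$ support $S^{*}\subseteq S$, split $I\setminus S^{*}$ into $I\setminus S$ and $S\setminus S^{*}$, then bound each of the $s-s^{*}$ discarded-but-large entries below by the average of the $|I|-s$ tail entries) is precisely the standard averaging argument behind the cited result, so it fills the gap faithfully.
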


\begin{lemma}\label{lemma:base}
    Let $\widehat{\theta}$ be any $s^{*} $-sparse vector, and $\theta_t $ be any $s $-sparse vector. Assume that the function $f$ fulfills Assumption~\ref{asp:rscvx} with $\bar{\mu} = \mu - 3\tau s > 0.$ Let $\theta_{t+1}:= \hs \left( \theta_{t} - \gamma_t g_t \right)  $, and $\widetilde{\theta}_{t+1}:= \theta_{t} - \gamma_t g_t $. For any $\gamma_t \leq \frac{f(\theta_t) - f(\widehat{\theta})}{5 \|  \hs(\widehat{g}) \|^{2}} $ we have
    \begin{align*}
        \| \theta_{t+1} - \widehat{\theta} \|^{2} &\leq \left( 1 + \sqrt{\frac{s^{*}}{s}} \right)^{2}  \left\| \left[ \widetilde{\theta}_{t+1} \right]_{\widehat{\mathcal{S}}_{t+1}} - \widehat{\theta} \right\|^{2}\\
        &\leq \left( 1 + \sqrt{\frac{s^{*}}{s}} \right)^{2} \left( 1 - \bar{\mu} \gamma_t \right) \| \theta_{t} - \widehat{\theta} \|^{2}.
    \end{align*}
\end{lemma}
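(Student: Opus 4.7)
The statement has two sub-claims that I will prove in sequence. The first controls the expansion caused by the hard-thresholding projection and follows from Lemma~\ref{lemma:jain} applied on a suitable joint support, while the second controls one descent step on $f$ using Assumption~\ref{asp:rscvx} together with the step-size hypothesis. Overall, my plan is to bound $\|\theta_{t+1} - \widehat\theta\|$ via triangle inequality through the intermediate vector $[\widetilde{\theta}_{t+1}]_I$ for the first sub-claim, and then to expand $\|[\widetilde{\theta}_{t+1}]_I - \widehat\theta\|^2$ using $\widetilde{\theta}_{t+1} = \theta_t - \gamma_t g_t$ and absorb the cross and quadratic terms using RSC and the step-size bound.

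For the first inequality, I set $I := \widehat{\mathcal{S}}_{t+1}$, understood as the union of all three relevant supports $\mathcal{S}_t \cup \mathcal{S}_{t+1} \cup \widehat{\mathcal{S}}$, so $|I| \leq 2s + s^{*}$. Because $\mathcal{S}_{t+1} \subseteq I$, the top-$s$ entries of $\widetilde{\theta}_{t+1}$ lie in $I$, and $\theta_{t+1}$ coincides with the hard-thresholding of $[\widetilde{\theta}_{t+1}]_I$ viewed in the $|I|$-dimensional coordinate subspace. Since $\widehat\theta$ is $s^{*}$-sparse on $\widehat{\mathcal{S}} \subseteq I$, Lemma~\ref{lemma:jain} applied in that subspace gives
\[
\|\theta_{t+1} - [\widetilde{\theta}_{t+1}]_I\|^2 \;\leq\; \frac{|I|-s}{|I|-s^{*}}\,\|\widehat\theta - [\widetilde{\theta}_{t+1}]_I\|^2.
\]
A short computation, writing $|I| = s + s^{*} - k$ with $k = |\mathcal{S}_{t+1}\cap\widehat{\mathcal{S}}|$ and using $s \geq s^{*}$, yields $\frac{|I|-s}{|I|-s^{*}} \leq \frac{s^{*}}{s}$. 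Combining this with the triangle inequality $\|\theta_{t+1} - \widehat\theta\| \leq \|\theta_{t+1} - [\widetilde{\theta}_{t+1}]_I\| + \|[\widetilde{\theta}_{t+1}]_I - \widehat\theta\|$ and squaring delivers the first inequality.

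For the second inequality, since $\mathcal{S}_t \subseteq I$ we have $[\theta_t]_I = \theta_t$, and since $\theta_t - \widehat\theta$ is supported on $I$ the cross term simplifies, giving
\[
\|[\widetilde{\theta}_{t+1}]_I - \widehat\theta\|^2 \;=\; \|\theta_t - \widehat\theta\|^2 - 2\gamma_t\langle g_t, \theta_t - \widehat\theta\rangle + \gamma_t^2 \,\|[g_t]_I\|^2.
\]
Applying Assumption~\ref{asp:rscvx} at the pair $(\widehat\theta,\theta_t)$ together with $\|\theta_t - \widehat\theta\|_1^2 \leq (s+s^{*})\|\theta_t - \widehat\theta\|^2 \leq 3s\,\|\theta_t - \widehat\theta\|^2$ gives $\langle g_t, \theta_t - \widehat\theta\rangle \geq f(\theta_t) - f(\widehat\theta) + \tfrac{\bar\mu}{2}\|\theta_t - \widehat\theta\|^2$, so that
\[
\|[\widetilde{\theta}_{t+1}]_I - \widehat\theta\|^2 \;\leq\; (1 - \bar\mu\gamma_t)\|\theta_t - \widehat\theta\|^2 - 2\gamma_t\bigl(f(\theta_t) - f(\widehat\theta)\bigr) + \gamma_t^2\|[g_t]_I\|^2.
\]

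The main obstacle, and where I expect the delicate work, is showing that the final two summands combine to a nonpositive quantity. Since $|I| \leq 3s$, a block-sorting argument on the decreasingly ordered coordinates of $g_t$ yields $\|[g_t]_I\|^2 \leq 3\|\hs(g_t)\|^2$, so the step-size hypothesis $\gamma_t \leq (f(\theta_t)-f(\widehat\theta))/(5\|\hs(\widehat g)\|^2)$ must be combined with an estimate relating $\|\hs(g_t)\|$ to $\|\hs(\widehat g)\|$, e.g.\ via $\|\hs(g_t)\| \leq \|\hs(g_t - \widehat g)\| + \|\hs(\widehat g)\|$ and a restricted gradient-Lipschitz bound on $\|\hs(g_t - \widehat g)\|$ in terms of $\|\theta_t - \widehat\theta\|$ coming from the RSS. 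Once the resulting constant is absorbed into the factor $5$, one obtains $\gamma_t\|[g_t]_I\|^2 \leq 2(f(\theta_t)-f(\widehat\theta))$, cancelling the two extra terms and delivering the second inequality.
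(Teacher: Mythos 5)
There is a genuine gap, and it sits in both halves of your argument. First, the support set: you take $I = \mathcal{S}_t \cup \mathcal{S}_{t+1} \cup \widehat{\mathcal{S}}$ so that $[\theta_t]_I = \theta_t$ and the cross term collapses to $-2\gamma_t\langle g_t,\theta_t-\widehat{\theta}\rangle$, but then $|I|$ can be as large as $2s+s^*$ and Lemma~\ref{lemma:jain} only gives the ratio $\frac{|I|-s}{|I|-s^*}$, which can be as large as $\frac{s+s^*}{2s}\geq \tfrac12$ and is \emph{not} bounded by $\frac{s^*}{s}$; your computation ``$|I|=s+s^*-k$'' silently drops $\mathcal{S}_t$ and contradicts your own definition of $I$. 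With your $I$ the expansion factor stays bounded away from $1$ as $s/s^*\to\infty$, which defeats the purpose of the lemma (the factor must be $1+O(\sqrt{s^*/s})$ so it can be offset by $1-\bar{\mu}\gamma_t$ when $s\gtrsim \bar{\kappa}^2 s^*$). The paper instead uses $\widehat{\mathcal{S}}_{t+1}=\mathcal{S}_{t+1}\cup\widehat{\mathcal{S}}$ (size at most $s+s^*$), which yields the correct ratio, and then pays for it in the second step: the cross term leaves a residual $2\gamma_t\langle\theta_t-\widehat{\theta},[g_t]_{\mathcal{S}_t\setminus\widehat{\mathcal{S}}_{t+1}}\rangle$, which is controlled by exploiting that hard thresholding keeps the $s$ largest entries, namely $\|[\theta_t-\gamma_t g_t]_{\mathcal{S}_t\setminus\mathcal{S}_{t+1}}\|\leq\|[\theta_t-\gamma_t g_t]_{\mathcal{S}_{t+1}\setminus\mathcal{S}_t}\|=\gamma_t\|[g_t]_{\mathcal{S}_{t+1}\setminus\mathcal{S}_t}\|$, leading to the $5\gamma_t^2\|\mathrm{HT}_{2s}(g_t)\|^2$ bound. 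You cannot have both the small Jain ratio and the clean cross term; this selection argument is the key idea of the proof and is missing from your proposal.

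Second, the final cancellation is deferred and does not close as sketched. Your RSC step (with $\|\theta_t-\widehat{\theta}\|_1^2\leq(s+s^*)\|\theta_t-\widehat{\theta}\|^2\leq 3s\|\theta_t-\widehat{\theta}\|^2$, matching $\bar{\mu}=\mu-3\tau s$) is fine, but the plan to bound $\|\hs(g_t)\|$ by $\|\hs(\widehat{g})\|$ plus an RSS term and ``absorb the constant into the $5$'' cannot work: RSS (Assumption~\ref{asp:rsmooth}) is not a hypothesis of this lemma, and the extra term is not a constant — it scales with $\|\theta_t-\widehat{\theta}\|$, so $\gamma_t\|[g_t]_I\|^2\leq 2\bigl(f(\theta_t)-f(\widehat{\theta})\bigr)$ does not follow from $\gamma_t\leq\frac{f(\theta_t)-f(\widehat{\theta})}{5\|\hs(\widehat{g})\|^2}$ alone (when $\widehat{g}\approx 0$ this hypothesis puts essentially no restriction on $\gamma_t$, yet $1-\bar{\mu}\gamma_t$ goes negative while the left-hand side does not). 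The paper's proof reads the step-size bound with the \emph{current} gradient, $\gamma_t\leq\frac{f(\theta_t)-f(\widehat{\theta})}{5\|\hs(g_t)\|^2}$ — consistent with Algorithm~\ref{algo:iht}, the $\widehat{g}$ in the lemma statement should be $g_t$ — after which the cancellation is immediate from $\|\mathrm{HT}_{2s}(g_t)\|^2\leq 2\|\hs(g_t)\|^2$. The comparison between $\|\hs(g_t)\|$ and $\|\hs(\widehat{g})\|$ that you want is performed in the paper only in the separate Lemma~\ref{lemma:step_size} (via Lemma~\ref{lemma:smooth}), in the opposite direction, and only under the additional condition that $\|\theta_t-\widehat{\theta}\|$ is large; it cannot be imported into this lemma for free.
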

\begin{proof}
    For the first inequality, 
    \[
        \| \theta_{t+1} - \widehat{\theta} \| \stackrel{(i)}{\leq} \| \theta_{t+1} - [\widetilde{\theta}_{t+1}]_{\widehat{\mathcal{S}}_{t+1}} \| + \| [\widetilde{\theta}_{t+1}]_{\widehat{\mathcal{S}}_{t+1}} - \widehat{\theta} \| \stackrel{(ii)}{\leq} \left( 1 + \sqrt{\frac{s^{*}}{s}} \right) \| [\widetilde{\theta}_{t+1}]_{\widehat{S}_{t+1}} - \widehat{\theta} \|,
    \]
    where in $(i)$ we use the triangle inequality and in $(ii)$ we used Lemma~\ref{lemma:jain}.  

    For the second inequality, we consider the expansion
\begin{align*}
\|[\tilde{\theta}_{t+1} ]_{\widehat{\mathcal{S}}_{t+1} } - \widehat{\theta}\|^2 &= \|[\tilde{\theta}_{t+1} - \widehat{\theta}]_{\widehat{\mathcal{S}}_{t+1} }\|^2 \\
&= \|[\theta_t - \widehat{\theta}]_{\widehat{\mathcal{S}}_{t+1} }\|^2 - 2 \gamma_t \langle \theta_t - \widehat{\theta}, [g_t]_{\widehat{\mathcal{S}}_{t+1} } \rangle + \gamma_t^2 \|[g_t]_{\widehat{\mathcal{S}}_{t+1} }\|^2,
\end{align*}
where
\begin{align*}
- 2 \gamma_t \langle \theta_t  - \widehat{\theta}, [g_t]_{\widehat{\mathcal{S}}_{t+1} } \rangle= - 2 \gamma_t \langle \theta_t - \widehat{\theta}, g_t \rangle + 2 \gamma_t \langle \theta_t - \widehat{\theta}, [g_t]_{\mathcal{S}_t \setminus \widehat{\mathcal{S}}_{t+1} } \rangle,
\end{align*}
and consequently
\begin{align*}
    \left\|[\tilde{\theta}_{t+1} - \widehat{\theta}]_{\widehat{\mathcal{S}}_{t+1}}\right \|^2 \leq \|\theta_t-\widehat{\theta}\|^2 - 2 \gamma_t \langle \theta_t - \widehat{\theta}, g_t - [g_t]_{\mathcal{S}_t \setminus \widehat{\mathcal{S}}_{t+1}} \rangle  + \gamma_t^2 \left\|[g_t]_{\widehat{\mathcal{S}}_{t+1}}\right\|^2
\end{align*}
Using the RSC yields
\begin{align}
\| [\widetilde{\theta}_{t+1} - \widehat{\theta}]_{\widehat{\mathcal{S}}_{t+1}} \|^{2} \leq & (1 - \bar{\mu} \gamma_t) \| \theta_{t} - \widehat{\theta} \|^{2} - 2 \gamma_t (f(\theta_t) - f(\widehat{\theta})) + \gamma_t^{2} \|\mathrm{HT}_{s+s^{*}}(g_t)\|^2 \nonumber \\
& + 2 \gamma_t \langle \theta_t - \widehat{\theta},  [g_t]_{\mathcal{S}_t \setminus \widehat{\mathcal{S}}_{t+1}}\rangle. \label{eq:l3}
\end{align} 
To obtain \eqref{eq:est_error} we must upper bound the inner product in \eqref{eq:l3}, for which we have:
\begin{align*}
\langle [\theta_t]_{\mathcal{S}_t \setminus \widehat{\mathcal{S}}_{t+1}} ,  \gamma_t [g_t]_{\mathcal{S}_t \setminus \widehat{\mathcal{S}}_{t+1}}\rangle & = \langle [\theta_t]_{\mathcal{S}_t \setminus \widehat{\mathcal{S}}_{t+1}}, \gamma_t [g_t]_{\mathcal{S}_t \setminus \widehat{\mathcal{S}}_{t+1}} \rangle \\
&= \langle [\theta_t]_{\mathcal{S}_t \setminus \widehat{\mathcal{S}}_{t+1}} - \gamma_t [g_t]_{\mathcal{S}_t \setminus \widehat{\mathcal{S}}_{t+1}} + \gamma_t [g_t]_{\mathcal{S}_t \setminus \widehat{\mathcal{S}}_{t+1}} ,  \gamma_t [g_t]_{\mathcal{S}_t \setminus \widehat{\mathcal{S}}_{t+1}}\rangle \\
&\leq \| [\theta_t]_{\mathcal{S}_t \setminus \widehat{\mathcal{S}}_{t+1}} - \gamma_t [g_t]_{\mathcal{S}_t \setminus \widehat{\mathcal{S}}_{t+1}} \| \| \gamma_t [g_t]_{\mathcal{S}_t \setminus \widehat{\mathcal{S}}_{t+1}} \| + \| \gamma_t [g_t]_{\mathcal{S}_t \setminus \widehat{\mathcal{S}}_{t+1}} \|^{2}.
\end{align*}
Then, 
\[
\| [ \theta_t - \gamma_t g_t]_{\mathcal{S}_t \setminus \widehat{\mathcal{S}}_{t+1}} \| \stackrel{(i)}{\leq} \| [ \theta_t - \gamma_t g_t]_{\mathcal{S}_t \setminus \mathcal{S}_{t+1}} \| \stackrel{(ii)}{\leq} \| [ \theta_t - \gamma_t g_t]_{\mathcal{S}_{t+1} \setminus \mathcal{S}_{t}} \| = \| [\gamma_t g_t]_{\mathcal{S}_{t+1} \setminus \mathcal{S}_t} \|,
\]
where in $(i)$ we use that $\mathcal{S}_t \setminus \widehat{\mathcal{S}}_{t+1} \subseteq \mathcal{S}_t \setminus \mathcal{S}_{t+1},$ and in $(ii)$ we exploit  that $|\mathcal{S}_t \setminus \mathcal{S}_{t+1}| = |\mathcal{S}_{t+1} \setminus \mathcal{S}_t|$ and  that $\mathcal{S}_{t+1}$ contains the indexes of the $s$ largest elements of $\theta_t - \gamma_t g_t.$ Thus, we obtain the overall upper bound
\begin{align*}
   2\gamma_t  \langle \theta_t - \widehat{\theta}, [g_t]_{\mathcal{S}_t \setminus \widehat{\mathcal{S}}_{t+1}} \rangle \leq 2\gamma_t^2 \|[g_t]_{\mathcal{S}_t \setminus \widehat{\mathcal{S}}_{t+1}}\|  \|[g_t]_{\mathcal{S}_{t+1} \setminus \mathcal{S}_t}\| + 2\gamma_t^2 \|[g_t]_{\mathcal{S}_t \setminus \widehat{\mathcal{S}}_{t+1}}\|^2,
\end{align*}
which together with \eqref{eq:l3} yields
\begin{equation}\label{eq:l3_1}
    \| [\widetilde{\theta}_{t+1} - \widehat{\theta}]_{\widehat{\mathcal{S}}_t} \|^{2} \leq (1 - \bar{\mu} \gamma_t) \| \theta_{t} - \widehat{\theta} \|^{2} - 2 \gamma_t (f(\theta_t) - f(\widehat{\theta})) + 5\gamma_t^{2}  \|\mathrm{HT}_{2s}(g_t)\|^2. 
\end{equation}
Given the upperbound on $\gamma_t ,$ the two right most terms together are negative, and we thus the proof is complete.
\end{proof}

Observe that in the proof of Lemma~\ref{lemma:base} we use the iterate $\tilde{\theta}_{t+1}$ to treat the effect of the hard thresholding operator and gradient descent separately. We then restrict to the support $\widehat{\mathcal{S}}_{t+1}$ to avoid the scaling of any bound with the ambient dimension $d.$

\begin{lemma}{(RSS-gradient bound)\label{lemma:smooth}}  Assume that $f$ fulfills Assumptions~\ref{asp:rscvx} where $\bar{\mu} = \mu - 3\tau s > 0,$ and Assumption~\ref{asp:rsmooth}. Then, for any pair $x,y$ of $s-$sparse vectors there holds
\begin{align*}
    \frac{1}{2 (L + 3 \tau s)}\| \hs \left( \nabla f(x) - \nabla f(y) \right)  \|^{2}  \leq f(y) - f(x) - \langle \nabla f(x), y-x \rangle.
\end{align*}
\end{lemma}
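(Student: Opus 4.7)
The plan is to adapt the classical ``smoothness implies co-coercivity of the gradient'' argument to the restricted setting, by minimizing a single-step upper bound on an auxiliary function and exploiting RSC to lower bound it. Define
\[
    \phi(z) := f(z) - \langle \nabla f(x), z \rangle, \qquad \nabla \phi(z) = \nabla f(z) - \nabla f(x),
\]
so that $\nabla\phi(x) = 0$, and $\phi$ inherits RSC and RSS from $f$ with the same constants $(\mu,\tau)$ and $(L,\tau)$. Note also that $\phi(y)-\phi(x) = f(y) - f(x) - \langle \nabla f(x), y-x\rangle$ and $\mathrm{HT}_s(\nabla\phi(y)) = \mathrm{HT}_s(\nabla f(y) - \nabla f(x))$, so the inequality to prove becomes $\phi(y) - \phi(x) \geq \frac{1}{2\bar{L}} \| \mathrm{HT}_s(\nabla\phi(y))\|^2$.

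The key construction is the one-step ``restricted gradient'' point
\[
    z^\star := y - \tfrac{1}{L+\tau s}\, \mathrm{HT}_s(\nabla \phi(y)).
\]
By construction $z^\star - y$ is $s$-sparse and $z^\star$ is at most $2s$-sparse. Apply the RSS inequality for $\phi$ between $y$ and $z^\star$; since $z^\star-y$ is $s$-sparse we may bound $\|z^\star-y\|_1^2 \leq s\|z^\star-y\|^2$, absorbing the $\ell_1$ penalty into the quadratic term with constant $L+\tau s$. Using the identity $\langle v, \mathrm{HT}_s(v)\rangle = \|\mathrm{HT}_s(v)\|^2$ and the optimal step size $1/(L+\tau s)$, this yields
\[
    \phi(z^\star) \leq \phi(y) - \frac{1}{2(L+\tau s)}\|\mathrm{HT}_s(\nabla \phi(y))\|^2.
\]

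For the matching lower bound, apply the RSC inequality for $\phi$ between $x$ and $z^\star$. Since $x$ is $s$-sparse and $z^\star$ is at most $2s$-sparse, $x - z^\star$ is at most $3s$-sparse, so $\|x-z^\star\|_1^2 \leq 3s\|x-z^\star\|^2$, which combined with RSC and $\nabla\phi(x)=0$ gives
\[
    \phi(z^\star) - \phi(x) \geq \tfrac{\mu - 3\tau s}{2}\|z^\star - x\|^2 = \tfrac{\bar{\mu}}{2}\|z^\star-x\|^2 \geq 0,
\]
so $\phi(z^\star) \geq \phi(x)$. Chaining the two bounds gives $\phi(y) - \phi(x) \geq \frac{1}{2(L+\tau s)}\|\mathrm{HT}_s(\nabla\phi(y))\|^2 \geq \frac{1}{2(L+3\tau s)}\|\mathrm{HT}_s(\nabla\phi(y))\|^2 = \frac{1}{2\bar{L}}\|\mathrm{HT}_s(\nabla\phi(y))\|^2$, which is the claim.

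The main subtlety is bookkeeping the sparsity levels at each application: the RSS step uses an $s$-sparse direction (and so only pays a $\tau s$ correction), while the RSC step must accommodate a $3s$-sparse difference (and so consumes the full $\bar\mu = \mu - 3\tau s$ budget). This is exactly why the assumption $\bar{\mu}>0$ is needed here, and the looser constant $L+3\tau s$ in the final statement is not essential---it is used only for consistency with $\bar L$ appearing elsewhere. Everything else is a direct computation with $\langle v, \mathrm{HT}_s(v)\rangle = \|\mathrm{HT}_s(v)\|^2$ and minimization over the step size.
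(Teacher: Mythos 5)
Your proof is correct and follows essentially the same route as the paper: define the anchored auxiliary function $\phi$ (so $\nabla\phi(x)=0$), take one hard-thresholded gradient step from $y$, upper-bound $\phi$ at that point via RSS, and lower-bound it via RSC at $x$. The only differences are cosmetic---you use the slightly sharper step $1/(L+\tau s)$ for the $s$-sparse direction and relax to $\bar{L}=L+3\tau s$ at the end, with somewhat more explicit sparsity bookkeeping than the paper's.
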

\begin{proof}
We define 
\[
   \phi(t) = f(t) - \langle \nabla f(x), t - x \rangle. 
\]
From its formulation, we know $\phi(t) $ inherits the RSS and RSC property of $f $.

As a result, for any $2s $-sparse vector $z $, we have
\begin{align*}
    \phi(x) &\leq \phi(z) - \langle 0, z-x \rangle - \frac{\bar{\alpha}}{2}\| x - z \|^{2}\\
            &= \phi(z) - \frac{\bar{\alpha}}{2} \| x - z \|^{2}\\
    &\leq \phi(z) \\
    &\leq \phi(y) + \langle \nabla\phi(y), z - y \rangle + \frac{ L + 3 \tau s}{2} \| z - y \|^{2}.
\end{align*}

Set $z = y - \frac{1}{ L + 3 \tau s}\hs \left( \nabla \phi(y) \right)$, the inequality above indicates
\[
\phi(x) \leq \phi(y) - \frac{1}{2 (L + 3 \tau s)} \| \hs \left( \nabla \phi(y) \right)  \|^{2},
\]
which is equivalent to
\[
f(x) + \langle \nabla f(x), y -x \rangle + \frac{1}{2 (L + 3 \tau s)} \| \hs (\nabla f(y) - \nabla f(x)) \|^{2} \leq f(y).
\]

\end{proof}

\begin{lemma}\label{lemma:step_size}
Consider the iterates $\{\theta_t\}_{t \geq 1}$ generated by Algorithm~\ref{algo:iht} to solve \eqref{eq:l0}. Assume that $f$ fulfills Assumptions~\ref{asp:rscvx} where $\bar{\mu} = \mu - 3\tau s > 0,$ and \ref{asp:rsmooth}. Further, denote by $\widehat{\theta}$ an arbitrary $s^{*}-$sparse vector for which $f(\widehat{\theta})$ is known and desirable. Then, the step-size
\begin{align*}
    \gamma_t = \frac{f(\theta_t) - f(\widehat{\theta})}{5\| \hs(g_t) \|^{2}} \geq \frac{1}{40 \bar{L}}
\end{align*}
for each $t \geq 0$ for which
\begin{align*}
    &\|\theta_{t} - \widehat{\theta}\|^2 \geq \frac{18\| \mathrm{HT}_{s+s^*}(\widehat{g}) \|^{2}}{\bar{\mu}^{2}} \\
    & f(\theta_t ) > f(\widehat{\theta}).
\end{align*}
\end{lemma}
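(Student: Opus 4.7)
The plan is to recast $\gamma_t \geq \tfrac{1}{40\bar{L}}$ as the equivalent gradient bound
\[
\|\hs(g_t)\|^2 \leq 8\bar{L}\,[f(\theta_t) - f(\widehat{\theta})],
\]
and establish it by combining the RSS-gradient inequality (Lemma~\ref{lemma:smooth}) with the RSC inequality and the standing hypothesis $\|\theta_t - \widehat{\theta}\|^2 \geq 18\|\mathrm{HT}_{s+s^*}(\widehat{g})\|^2/\bar{\mu}^2$.

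First, decompose $g_t = (g_t - \widehat{g}) + \widehat{g}$ and apply $(a+b)^2 \leq 2a^2 + 2b^2$ coordinate-wise on the top-$s$ support of $g_t$. Using that $\|[v]_S\|^2 \leq \|\hs(v)\|^2 \leq \|\mathrm{HT}_{s+s^*}(v)\|^2$ for any set $|S|=s$, this yields
\[
\|\hs(g_t)\|^2 \leq 2\|\hs(g_t - \widehat{g})\|^2 + 2\|\mathrm{HT}_{s+s^*}(\widehat{g})\|^2.
\]
Since $\widehat{\theta}$ and $\theta_t$ are both $s$-sparse (using $s \geq s^*$), Lemma~\ref{lemma:smooth} gives $\|\hs(g_t - \widehat{g})\|^2 \leq 2\bar{L}\,\Delta$ with $\Delta := f(\theta_t) - f(\widehat{\theta}) - \langle \widehat{g}, \theta_t - \widehat{\theta}\rangle$, and the RSC applied to the $(s+s^*)$-sparse vector $\theta_t - \widehat{\theta}$ guarantees $\Delta \geq \tfrac{\bar{\mu}}{2}\|\theta_t - \widehat{\theta}\|^2$.

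The main step is to bound both $\Delta$ and $\|\mathrm{HT}_{s+s^*}(\widehat{g})\|^2$ by a constant times $E := f(\theta_t) - f(\widehat{\theta})$. Writing $E = \Delta + \langle \widehat{g}, \theta_t - \widehat{\theta}\rangle$ and controlling the inner product via $|\langle \widehat{g}, \theta_t - \widehat{\theta}\rangle| \leq \|\mathrm{HT}_{s+s^*}(\widehat{g})\|\,\|\theta_t - \widehat{\theta}\| \leq \tfrac{\bar{\mu}}{\sqrt{18}}\|\theta_t - \widehat{\theta}\|^2$ (by the hypothesis), the RSC then yields $\bar{\mu}\|\theta_t - \widehat{\theta}\|^2 \leq 2E/(1 - 2/\sqrt{18})$, and consequently $\Delta \leq E/(1 - 2/\sqrt{18})$ and $\|\mathrm{HT}_{s+s^*}(\widehat{g})\|^2 \leq \bar{\mu}E/[9(1 - 2/\sqrt{18})]$. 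Assembling these with $\bar{\mu} \leq \bar{L}$ gives
\[
\|\hs(g_t)\|^2 \leq \frac{4\bar{L} + 2\bar{\mu}/9}{1 - 2/\sqrt{18}}\,E \leq \frac{38/9}{1 - 2/\sqrt{18}}\,\bar{L}\,E \leq 8\bar{L}\,E,
\]
which is equivalent to $\gamma_t \geq 1/(40\bar{L})$.

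The main obstacle is purely quantitative: the final tally $(4 + 2/9)/(1 - 2/\sqrt{18}) \approx 7.99$ clears $8$ with essentially no slack, so the constants $18$ in the hypothesis and $5$ in the definition of $\gamma_t$ are tightly matched. The distance hypothesis is exactly what dominates the bias introduced by $\widehat{g}$ (which need not vanish because $\widehat{\theta}$ is only a level-set vector, not a stationary point); without it, the cross term $\langle \widehat{g}, \theta_t - \widehat{\theta}\rangle$ could inflate $\|\hs(g_t)\|^2$ beyond the $8\bar{L}E$ threshold and invalidate the step-size lower bound.
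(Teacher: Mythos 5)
Your proposal is correct and follows essentially the same route as the paper's proof: the same splitting $\|\hs(g_t)\|^2 \leq 2\|\hs(g_t-\widehat{g})\|^2 + 2\|\mathrm{HT}_{s+s^*}(\widehat{g})\|^2$, the same use of Lemma~\ref{lemma:smooth} to convert the gradient difference into $\bar{L}\,\Delta$, and the same use of RSC plus Cauchy--Schwarz together with the distance hypothesis to control $\langle \widehat{g},\theta_t-\widehat{\theta}\rangle$; the only difference is bookkeeping, since the paper verifies a sufficient condition $\|\hs(\widehat{g})\|^2 \leq 2\bar{L}\bigl(f(\theta_t)-f(\widehat{\theta})+\langle\widehat{g},\theta_t-\widehat{\theta}\rangle\bigr)$ while you bound $\Delta$ and $\|\mathrm{HT}_{s+s^*}(\widehat{g})\|^2$ directly by multiples of $E$. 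Your arithmetic (final constant $\approx 7.99 < 8$, using $\bar{\mu}\leq\bar{L}$) checks out, matching the paper's equally tight margin.
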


\begin{proof}
Assume that $f(\theta_t) - f(\widehat{\theta}) > 0,$ then
\begin{align*}
\gamma_t \geq \frac{f(\theta_t) - f(\widehat{\theta})}{10 \| \hs(g_t - \widehat{g}) \|^{2} + 10 \| \hs(\widehat{g}) \|^2}.
\end{align*}
Given that $f$ fulfills Assumption~\ref{asp:rsmooth} we may invoke Lemma~\ref{lemma:smooth} yielding the bound
\begin{align*}
    \gamma_t \geq \frac{f(\theta_t) - f(\widehat{\theta})}{20 \bar{L}(f(\theta_t) - f(\widehat{\theta}) - \langle \widehat{g}, \theta_t - \widehat{\theta} \rangle) + 10 \| \hs(\widehat{g}) \|^2}.
\end{align*}
If 
\begin{align}\label{eq:case1}
    10 \left( 2 \bar{L}\langle \widehat{g}, \widehat{\theta} - \theta_t \rangle + \| \hs(\widehat{g}) \|^2 \right) \leq 20 \bar{L}(f(\theta_t) - f(\widehat{\theta}))
\end{align}
we can guarantee that
\begin{align*}
    \gamma_t \geq \frac{1}{40 \bar{L}}.
\end{align*}
Rearranging \eqref{eq:case1} we have that the condition can be equivalently written as
\begin{align*}
    \| \hs(\widehat{g}) \|^2 \leq 2 \bar{L} \left( f(\theta_t) - f(\widehat{\theta}) + \langle \widehat{g}, \theta_t - \widehat{\theta} \rangle \right).
\end{align*}
Invoking the RSC, a sufficient condition for the above is
\[
 \|  \hs(\widehat{g}) \|^{2} \leq 2 \bar{L} (\frac{\bar{\mu}}{2} \| \theta_t - \widehat{\theta} \|^{2} + 2 \langle \widehat{g}, \theta_t - \widehat{\theta} \rangle),
\]
which can be guaranteed as long as
\begin{align*}
 \|  \hs(\widehat{g}) \|^{2} &\leq 2 \bar{L} (\frac{\bar{\mu}}{2} \| \theta_t - \widehat{\theta} \|^{2} - 2  \|\mathrm{HT}_{s+s^{*}}(\widehat{g})\| \| \theta_t - \widehat{\theta} \| )\\
 &= 2 \bar{L} \left( \frac{\bar{\mu}}{2} \| \theta_t - \widehat{\theta} \| - 2  \|\mathrm{HT}_{s+s^{*}}(\widehat{g})\| \right) \| \theta_t - \widehat{\theta} \|. 
\end{align*}
To guarantee that the above holds it is sufficient to request that $\|\theta_t - \widehat{\theta}\| \geq \frac{106}{25\bar{\mu}}\|\mathrm{HT}_{s+s^*}(\widehat{g})\|,$ and thus the result follows.

\end{proof}

\subsection{Proof of Theorem 1}\label{sec:proof:main_thm}

As a consequence of Lemma~\ref{lemma:step_size} we distinguish three cases for any $t:$ \textbf{(i)} $\|\theta_t - \widehat{\theta}\|^2 \geq \frac{36 \|\mathrm{HT}_s(\widehat{g})\|^2}{\bar{\mu}^2}$ and $f(\theta_t) - \widehat{f} > 0,$ \textbf{(ii)} $\|\theta_t - \widehat{\theta}\|^2 < \frac{36\|\mathrm{HT}_s(\widehat{g})\|^2}{\bar{\mu^2}}$ and $f(\theta_t) - \widehat{f} > 0$, or \textbf{{(iii)}} $f(\theta_t) - \widehat{f} \leq 0.$

In case \textbf{(iii)} no progress is made, i.e. $\theta_{t+1} = \theta_t$ and by the RSC there holds
\begin{align*}
    \frac{\bar{\mu}}{2}\|\theta_t - \widehat{\theta}\|^2 \leq \|\theta_t - \widehat{\theta}\|\|\mathrm{HT}_{s+s^{*}} (\widehat{g})\|
\end{align*}
and thus
\begin{align*}
    \|\theta_{t+1} - \widehat{\theta}\|^2 = \|\theta_t - \widehat{\theta}\|^2 < \frac{36\|\mathrm{HT}_s(\widehat{g})\|^2}{\bar{\mu}^2}.
\end{align*}
Further, from the above it follows that if $\|\theta_t - \widehat{\theta}\|^2 \geq \frac{36\|\mathrm{HT}_s(\widehat{g})\|^2}{\bar{\mu}^2}$ we can guarantee that $\gamma_t > 0.$

For case \textbf{(i)} we begin by invoking Lemma~\ref{lemma:base}, which guarantees that 
\[
\| \theta_{t+1} - \widehat{\theta} \|^{2} \leq \left( 1 + 3\sqrt{\frac{s^{*}}{s}} \right)^{2} \left( 1 - \bar{\mu} \gamma_t \right) \| \theta_{t} - \widehat{\theta} \|^{2}.
\]
Using Lemma~\ref{lemma:step_size} we can guarantee a lower bound on the step-size $\gamma_t \geq \frac{1}{40\bar{L}}.$ Further, under our assumption on $s,$ namely, $s \geq (240\bar{\kappa})^2 s^*,$ we can bound the contraction factor:
\[
\left( 1 + \sqrt{\frac{s^{*}}{s}} \right)^{2} \left( 1 - \bar{\mu} \gamma_t \right) \leq \left( 1 + \sqrt{\frac{s^{*}}{s}} \right)\left( 1 - \bar{\mu} \gamma_t \right) \leq \left(1 + \frac{1}{80 \bar{\kappa}}\right)\left(1 - \frac{1}{40 \bar{\kappa}}\right) \leq 1 - \frac{1}{80 \bar{\kappa}},
\]
and therefore,
\begin{align*}
    \|\theta_{t+1} - \widehat{\theta}\|^2 \leq \left(1 - \frac{1}{80\bar{\kappa}} \right) \|\theta_t - \widehat{\theta}\|^2.
\end{align*}

Thus, the first part of the theorem's statement follows, i.e. when $\|\theta_t - \widehat{\theta}\|$ is sufficiently large, we can guarantee that $\theta_{t+1}$ will approach $\widehat{\theta}.$ We are now left with establishing the veracity of the second statement. For this, let $t_0$ be the time defined in the theorem's statement. Then, we are
under case \textbf{(ii)} or case \textbf{(iii)}. If we are under case \textbf{(iii)} there is nothing left to proof. If we are in case \textbf{(ii)}, there are two further cases: \textbf{(a)} the iterates remain confined within a ball of radius $6\|\mathrm{HT}_s(\widehat{g})\|^2/\bar{\mu}^2,$ i.e.
\begin{align*}
    \forall t \geq t_0, \, \|\theta_t - \widehat{\theta}\|^2 < \frac{36\|\mathrm{HT}_s(\widehat{g})\|^2}{\bar{\mu}^2},
\end{align*}
and the theorem's second statement is therefore true, 
or \textbf{(b)} there exists a time $t_1 > t_0$ at which for the first time
\begin{align*}
   \frac{36 \|\mathrm{HT}_s(\widehat{g})\|^2}{\bar{\mu}^2} \leq  \|\theta_{t_1} - \widehat{\theta}\|^2.
\end{align*}
By Lemma~\ref{lemma:base} and by definition of $t_1$ we have
\begin{align*}
    \|\theta_{t_1} - \widehat{\theta}\|^2 \leq \left(1 + \frac{1}{80\bar{\kappa}} \right)\left(1 - \bar{\mu}\gamma_t \right)\frac{36\|\mathrm{HT}_s(\widehat{g})\|^2}{\bar{\mu}^2} < \left(1 + \frac{1}{80\bar{\kappa}} \right)\frac{36\|\mathrm{HT}_s(\widehat{g})\|^2}{\bar{\mu}^2}.
\end{align*}
This implies, we find ourselves again in case \textbf{(i)}. Observe that going forward, no iterate can escape the above ball and therefore the second statement of the theorem holds. 

\subsection{Proof of Corollary~\ref{coro:snr}}\label{sec:proof:snr}
If \eqref{eq:snr} holds for $\widehat{\theta}$, for any $\theta_t$ fulfilling  $\|\theta_t - \widehat{\theta}\|^2  \leq \frac{36 \|\mathrm{HT}_s(\widehat{g})\|^2}{\bar{\mu}^2}$ there holds
\begin{align}\label{eq:snr2}
\min_{i \in \widehat{\mathcal{S}}} \quad |[\theta_t]_i| \geq \frac{7\|\hs(\widehat{g})\|}{\bar{\mu}} - \frac{6\|\hs(\widehat{g})\|}{\bar{\mu}}>0.
\end{align}
This can be established by contradiction, i.e. if the condition above is violated, either $\|\theta_t - \widehat{\theta}\|^2 > \frac{36\|\mathrm{HT}_s(\widehat{g})\|^2}{\bar{\mu}^2}$ or \eqref{eq:snr} can not hold. Consequently, $\widehat{\mathcal{S}} \subset \mathcal{S}^{t}.$ 
Now we consider the iterate $\theta_{t+1}$. Note that $\widehat{\mathcal{S}} \subset \mathcal{S}^{t+1}$, if
\begin{equation}\label{eq:snr1}
\gamma_t \|g_t\|_{\infty} < \frac{\|\mathrm{HT}_s(\widehat{g})\|}{2\bar{\mu}}.	
\end{equation}
To see this note that for any $i \notin \mathcal{S}_t$ we have
\begin{align*}
    |[\theta_t]_i - \gamma_t [g_t]_i | = \gamma_t |[g_t]_i| < \frac{\|\mathrm{HT}_s(\widehat{g})\|}{2\bar{\mu}},
\end{align*}
and  for any $i \in \widehat{\mathcal{S}}$, given that $\| \theta_{t} - \widehat{\theta} \| \leq \frac{6 \| \hs(\widehat{g}) \|}{\bar{\mu}} $, we have
\begin{align*}
    & |[\theta_t]_i - \gamma_t [g_t]_i| >  \frac{7\|\mathrm{HT}_s(\widehat{g})\|}{\bar{\mu}} - \frac{6\|\mathrm{HT}_s(\widehat{g})\|}{2\bar{\mu}} - \frac{\|\mathrm{HT}_s(\widehat{g})\|}{2\bar{\mu}} = \frac{\|\mathrm{HT}_s(\widehat{g})\|}{2\bar{\mu}}.
\end{align*}
Because the Hard Thresholding operator selects the $s$ largest components of $\theta_{t+1}$, in the selection of the elements that should go into $\mathcal{S}^{t+1}$ the operator will not deselect elements from $\widehat{\mathcal{S}}$ in benefit of any outside of $\mathcal{S}^t.$ Thus, $\widehat{\mathcal{S}} \subset \mathcal{S}^{t+1}.$ 

We now find conditions on $\gamma_t$ under which \eqref{eq:snr1} can be guaranteed. Observe that
\begin{align*}
\gamma_t \|g_t\|_{\infty} & \leq \gamma_t \left( \|\widehat{g}\|_{\infty} + \|g_t - \widehat{g}\|_{\infty} \right)\\
&\stackrel{(i)}{\leq} \gamma_t \left(\|\widehat{g}\|_{\infty} + \bar{L}\|\theta_t - \widehat{\theta}\|\right) \stackrel{(ii)}{\leq} \gamma_t \left(\|\widehat{g}\|_{\infty} + \frac{6\bar{L}}{\bar{\mu}}\|\hs(\widehat{g})\| \right),
\end{align*}
where $(i)$ follows from Lemma~\ref{lemma:smooth}, and $(ii)$ follows from the assumption that $\|\theta_t - \widehat{\theta}\|^2 \leq \frac{36\|\mathrm{HT}_s(\widehat{g})\|^2}{\bar{\mu}}.$ A sufficient condition for 
\eqref{eq:snr1} to hold is thus given by 
\begin{align*}
\gamma_t < \frac{1}{2\bar{\mu} + 12\bar{L}}.
\end{align*}

As a result, when $\gamma_t < \frac{1}{2\bar{\mu} + 12\bar{L}}$, we have $\widehat{\mathcal{S}} \subset \mathcal{S}^{t+1}$, and thus $\|\theta_{t+1} - \widehat{\theta}\|^2 \leq \|\theta_{t} - \widehat{\theta}\|^2$ by \ $\|\theta_{t+1} - \widehat{\theta}\|^2 = \|[\theta_{t+1} - \widehat{\theta}]_{\widehat{\mathcal{S}}_{t+1}}\|^2 \leq (1-\bar{\mu} \gamma_t)\|\theta_{t} - \widehat{\theta}\|^2$. Otherwise, $\gamma_t \geq \frac{1}{2\bar{\mu} + 12\bar{L}} > \frac{1}{40 \bar{L}}$, we still have $\|\theta_{t+1} - \widehat{\theta}\|^2 \leq \|\theta_{t} - \widehat{\theta}\|^2$ by Lemma~\ref{lemma:base}. Thus, the proof is completed by induction.

\subsection{Proof of Corollary~\ref{coro:lg}}\label{appendix:coro:lg}
By invoking Lemma~\ref{lm:lg}, we can guarantee that \(f\) fulfills the RSC and RSM with probability at least $1-e^{-c_0n}$. The function $f$ is convex by construction, and assuming that Algorithm~\ref{algo:iht} is provided suitable parameters $\widehat{f}$ and $s$ as stipulated by the Corollary, we may invoke  Theorem~\ref{thm:1_f}. Thus, with probability at least \(1 - e^{-c_0 n}\), the iterates satisfy a contractive relation for \(\| \theta_t - \theta^* \|^2\) until the point where \(\| \theta_t - \theta^* \|^2 < \frac{36 \| \hs(g^{*}) \|^{2}}{\bar{\mu}^2}\).

To complete the proof, it remains to establish that, with probability at least \(1 -  \frac{2}{d}\), it  holds that
\begin{equation}\label{eq:claim1}
\frac{36 \| \hs(g^{*}) \|^{2}}{\bar{\mu}^2} \leq 72 C^2 \frac{s \log d}{n \bar{\mu}^2}.
\end{equation}
The proof of this claim essentially follows the arguments in \cite[Example 7.14]{hd}; for completeness, we include the full details here.

Define \(\sigma(x) = \frac{1}{1 + \exp(-x)}\). The gradient of \(f\) evaluated at \(\theta^*\) can be expressed as
\[
g^* = \frac{1}{n} \sum_{i=1}^n \left( \sigma(x_i^\top \theta^*) - y_i \right) x_i.
\]
Recall the relation between \(x_i\) and \(y_i\) is governed by the model
\[
\Pr(y_i = 1 \mid x_i) = \frac{1}{1 + \exp(-x_i^\top \theta^*)}.
\]
Under this model, it follows that each term \(\sigma(x_i^\top \theta^*) - y_i\) is a zero-mean sub-Gaussian random variable with sub-Gaussian parameter \(\sigma^2 = \frac{1}{4}\).

Thus, \(\|g^*\|_\infty\) is the maximum of \(d\) independent zero-mean sub-Gaussian random variables, each with variance proxy at most \(\sigma^2 = \frac{C^2}{4n}\). By standard sub-Gaussian maximal inequalities, we have
\[
\Pr\left( \|g^*\|_\infty \geq C \sqrt{\frac{\log d}{2n}} + \frac{C\delta}{2} \right) \leq 2 e^{-\frac{n\delta^2}{2}}
\]
for any \(\delta > 0\). Setting \(\delta = \sqrt{\frac{2 \log d}{n}}\) completes the proof of the bound in \eqref{eq:claim1}.

Applying the union bound allows us to claim that the guarantees provided by Theorem~\ref{thm:1_f} hold with probability at least $1 - c_0e^{-n} - \frac{2}{d}.$ Guarantees on support recovery follow then by direct application of Corollary~\ref{coro:snr}.

\subsection{Proof to Corollary~\ref{coro:mat}} \label{sec:coro:mat}
By \cite[Lemma 2]{jain}, we can verify the result of Theorem~\ref{thm:1_f} translates to the matrix case, where the vector \( \ell_2 \)-norm is replaced by the Frobenius norm, the \( \ell_1 \)-norm is replaced by the nuclear norm, and the $\hs $ operator is substituted by $\pj $. 

By applying Theorem~\ref{thm:1_f} and Lemma~\ref{lm:mr}, we conclude that, with probability at least \(1 - e^{-c_0 n}\), the iterates satisfy a contractive relation for \(\| \Theta_t - \Theta^* \|^2_{F}\) until the point where \(\| \Theta_t - \Theta^* \|^2_{F} < \frac{36  \| \pj (g^*) \|^2_{F}}{\bar{\mu}^2}\).

To complete the proof, it remains to establish that, with probability at least \(1 - \frac{2}{d}\),
\begin{equation}\label{eq:claim2}
\frac{36 \| \pj (g^*) \|^2_{F}}{\bar{\mu}^2} \leq\frac{7200 \sigma^2 \zeta(\Sigma) s d}{n \bar{\mu}^2}.
\end{equation}

Note that $g^* = \frac{1}{n} \varepsilon_{i} X_{i}$, by \cite[Corollary~10.10]{hd}, we have
\[
\Pr \left( \| g^* \|_{2} \geq \frac{\lambda_{n}}{2} \right)  \leq 2 e^{-2n \delta^{2}},
\]
where $\lambda_{n} = 10 \sigma \sqrt{\zeta \left( \Sigma \right)} \left( \sqrt{\frac{2d}{n}} + \delta \right). $

By setting $\delta = \sqrt{\frac{2d}{n}} $, we have 
\[
\| g^* \|_{2} \leq 10 \sigma \sqrt{\zeta \left( \Sigma \right)}  \sqrt{\frac{2d}{n}}, 
\]
with probability at least $1 - 2 e^{-4d} $. We thus apply the union bound to complete the proof of claim \eqref{eq:claim2}.

\section{Adaptive Lower Bound}\label{sec:ada} 

\subsection{Proof of Theorem~\ref{thm:ada_f}}\label{sec:ada:proof}
Let  \( a_t := \frac{f(\theta_t) - f(\theta^*)}{5 \| \hs(g_t) \|} \). Suppose the step size \( \gamma_t \) used in Algorithm~\ref{algo:iht_ada} satisfies \( \gamma_t = b a_t \) for some scalar \( b \in \left[\frac{1}{2}, 1\right] \). By invoking Lemmas~\ref{lemma:base} and~\ref{lemma:step_size}, as long as \( \| \theta_t - \theta^* \|^2 \geq \frac{36 \| \hs (g^{*}) \|^{2}}{\bar{\mu}^2} \), we have:
\[
\| \theta_{t+1} - \theta^* \|^2 \leq \left(1 + \frac{1}{160 \bar{\kappa}}\right)\left(1 - \frac{b}{40 \bar{\kappa}}\right)\| \theta_t - \theta^* \|^2 \leq \left(1 - \frac{1}{160 \bar{\kappa}}\right)\| \theta_t - \theta^* \|^2.
\]
This establishes that the iterates exhibit  contractive behavior until they enter a small neighborhood of the optimum.

We now consider two possible cases depending on whether the lower bound surrogate \( \widetilde{f}_k \) is valid and how the step size compares to \( a_t \) during epoch \( k \).

Case \textbf{(i):} Suppose \( \widetilde{f}_k \) is a valid lower bound for \( f(\theta^*) \), and that \( \gamma_t \leq a_t \) holds for all iterations \( t = 0, \ldots, T(\alpha) \) within epoch \( k \). Then, the contractive relation applies repeatedly, and we obtain
\[
\| \theta_{T(\alpha)} - \theta^* \|^2 \leq \left(1 - \frac{\bar{\mu}}{160 \bar{L}}\right)^{T(\alpha)} \| \theta_0 - \theta^* \|^2 \leq (1 + \alpha)\left(1 + \frac{1}{160\bar{\kappa}} \right) \frac{36 \| \hs (g^{*}) \|^{2}}{\bar{\mu}^2}.
\]
By the restricted smoothness and strong convexity assumptions, this implies that the function suboptimality satisfies \( f(\theta_{T(\alpha)}) - f^* \leq \varepsilon(\alpha) \), thus completing the proof for this case. 

Case \textbf{(ii):} Alternatively, suppose that \( \widetilde{f}_k \) is a valid lower bound, but \( \gamma_t > a_t \) for some \( t \) in epoch \( k \). This condition implies that
\[
f(\theta_t) - \widetilde{f}_k > 2(f(\theta_t) - f(\theta^*)),
\]
which in turn yields
\[
\widetilde{f}_{k+1} = \frac{f(\bar{\theta}_k) + \widetilde{f}_k}{2} \leq \frac{f(\theta_t) + \widetilde{f}_k}{2} < f(\theta^*).
\]
Hence, \( \widetilde{f}_{k+1} \) is also a valid lower bound. By induction, we conclude that if Case I never occurs, then all \( \widetilde{f}_k \), for \( k = 1, \dots, K \), remain valid lower bounds.

Moreover, under this scenario, the sequence \( f^* - \widetilde{f}_k \) decreases geometrically. In particular,
\[
f^* - \widetilde{f}_{k+1} \leq f^* - \frac{f^* + \widetilde{f}_k}{2} = \frac{f^* - \widetilde{f}_k}{2}.
\]

The geometric decrease of \( f^* - \widetilde{f}_k \) in case \textbf{(ii)} ensures that if case \textbf{(i)} never occurs, then there exists some \( k_0 \) such that \( f(\theta^*) - \widetilde{f}_{k_0} < \varepsilon(\alpha) \). In that case, \( \bar{\theta}_{k_0} \) is either an output corresponding to case \textbf{(i)} (which completes the proof) or an output under case \textbf{(ii)}, i.e., it satisfies  
\[
f(\bar{\theta}_{k_0}) - f(\theta^*) < f(\theta^*) - \widetilde{f}_{k_0} < \varepsilon(\alpha).
\]

\section{Other Statistical Guarantees}\label{sec:stat_guarantees}
In this section we provide guarantees for additional statistical models not provided in the body of the paper. The results in Appendix~\ref{sec:linear} hold for sparse linear regression. The results in Appendix~\ref{sec:glm} apply to some GLMs and require the analysis of the behavior of Sparse Polyak under different regularity conditions. These are not entirely captured in Theorem~\ref{thm:1_f}.

\subsection{Sparse Linear Regression}\label{sec:linear}
In this section, we assume the dataset consists of data points $z_i = (x_i, y_i)$ for $i = 1, \dots, n$, where $x_i \in \mathbb{R}^d$ denote the feature vectors, and $y_i \in \mathbb{R}$ denote the responses. The feature vectors are aggregated into the design matrix
\begin{align*}
    \mathbb{R}^{n \times d} \ni X \triangleq \begin{pmatrix}
    x_1^{\top} \\
    \vdots \\
    x_n^{\top}
    \end{pmatrix}
\end{align*}
and the responses are aggregated in $\mathbb{R}^n \ni y \triangleq (y_1,\hdots,y_n)^{\top}.$ Let \( \theta^{*} \) denote  the ground truth of the statistical model, with $\|\theta^*\|_0 \leq s^*,$ and \( f^{*} \) denote the corresponding objective value. Specifically, we assume that the responses \( y_{i} \) and feature vectors \( x_{i} \) are related by \( y_{i} = x_{i}^{T} \theta^{*} + \varepsilon_{i} \), where \( x_{i} \) are drawn from a \( N(0, \Sigma) \) distribution, $\Sigma$ is non singular, \( \varepsilon_{i} \sim N(0, \sigma^{2}) \), and $x_i$ and $\varepsilon_i$ are i.i.d and independent of one another. Additionally, the objective function \( f(\theta) = \frac{1}{2n}\| X \theta - y \|^{2} \).

From \cite{fast}[Lemma 6], it follows that the RSS and RSC conditions hold  with probability at least \( 1 - e^{-c_{0}n} \) with coefficients  \( L = 2 \sigma_{\max}(\Sigma) \), \( \mu = \frac{1}{2} \sigma_{\min}(\Sigma) \), and \( \tau = c_{1} \zeta(\Sigma) \frac{\log d}{n} \), where \( \zeta(\Sigma) = \max_{i=1, \dots, d} \Sigma_{ii} \). Here \( c_{0} \) and \( c_{1} \) are universal constants.

\begin{corollary}\label{coro:lr}
    Consider the sparse linear regression problem described above. Let $\{\theta_t\}_{t \geq 0}$ be the sequence of iterates generated by Algorithm~\ref{algo:iht} or Algorithm~\ref{algo:iht_ada} when employed to solve a sparse linear regression problem. Suppose that $\widehat{f} =f^*$ for Algorithm~\ref{algo:iht}. Assume we have sufficient samples for $\bar{\mu} > 0,$ with \( s \geq (240 \bar{\kappa})^{2} s^{*} \) for Algorithm~\ref{algo:iht}, and \( s \geq (480 \bar{\kappa})^{2} s^{*} \) for Algorithm~\ref{algo:iht_ada}. Further, assume that  each column of $X $ is $C $-normalized, i.e., $\| \frac{X_{j}}{\sqrt{n}} \| \leq C $ for $j=1,\dots,d $. Here $X_{j} $ denotes the $j $-th column of $X $. Then, with probability at least \( 1 - e^{-c_{0}n} - \frac{2}{d}\), for any $\alpha \geq \frac{1}{80}$ after $T(\alpha)$ iterations,  \( \min_{t\leq T(\alpha)}\|\theta_{t} - \theta^{*}\|^{2}\) is upper bounded by
    \[ 
    \varepsilon(\alpha) = (1+\alpha) \frac{288 C^2 \sigma^2 s \log d}{n \bar{\mu}^2}.
    \]

    The required number of iterations $T(\alpha)$ fulfills:
    \begin{align*}           
    & T(\alpha ) \leq \left\lceil \frac{1}{\log \left( 1 / (1 - 1 / 80 \bar{\kappa}) \right)} \log \left( \frac{\| \theta_{0} - \theta^* \|^{2}}{\varepsilon(\alpha)} \right)\right\rceil, \text{and }\\
    &T(\alpha) \leq \left( 1 + \log_{2} \frac{4(f(\theta_{0}) - f(\theta^{*}))}{\bar{\mu} \varepsilon(\alpha)} \right)\left\lceil \frac{1}{\log \left( 1 / (1 - 1 / 160 \bar{\kappa}) \right)} \log \left( \frac{\|\theta_{0} - \theta^{*}\|^2}{\varepsilon(\alpha)} \right)\right\rceil,
    \end{align*}
    for Algorithm~\ref{algo:iht} and Algorithm~\ref{algo:iht_ada} respectively.
    
    Moreover, if $\theta^*$  satisfies the SNR condition \eqref{eq:snr}, we can ensure that after $T(0)$ iterations, the error is upper bounded by $\varepsilon(0),$ and  the support of $\theta^{*}$ has been identified, i.e. $\mathcal{S}^* \subset \mathcal{S}^t$ $\forall t \geq T(0).$
\end{corollary}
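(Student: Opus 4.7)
The plan is to follow the template of the proof of Corollary~\ref{coro:lg}, specializing Theorem~\ref{thm:1_f} (for Algorithm~\ref{algo:iht}) and Theorem~\ref{thm:ada_f} (for Algorithm~\ref{algo:iht_ada}) to the sparse linear regression model and coupling these deterministic guarantees with a concentration bound on $\|\mathrm{HT}_s(\nabla f(\theta^*))\|$. First I would invoke \cite[Lemma~6]{fast} to conclude that with probability at least $1-e^{-c_0 n}$ the squared loss $f$ satisfies Assumptions~\ref{asp:rscvx} and~\ref{asp:rsmooth} with the stated parameters $L$, $\mu$, $\tau$. Combined with the assumed sample-size and sparsity conditions, this places us inside the hypotheses of Theorem~\ref{thm:1_f} (respectively Theorem~\ref{thm:ada_f}) under the identification $\widehat{\theta}=\theta^*$, $\widehat{f}=f^*$.

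Second, I would control the statistical noise term $\|\mathrm{HT}_s(\nabla f(\theta^*))\|^2$. Since $\nabla f(\theta^*)=-X^{\top}\varepsilon/n$, each coordinate $[\nabla f(\theta^*)]_j$ is, conditional on $X$, centered Gaussian with variance at most $C^2\sigma^2/n$ under the column-normalization assumption. Applying a standard Gaussian maximal inequality exactly as in \cite[Example~7.14]{hd} yields $\|\nabla f(\theta^*)\|_\infty^2\leq 8C^2\sigma^2\log d/n$ with probability at least $1-2/d$. Since $\mathrm{HT}_s$ retains at most $s$ coordinates,
\[
\frac{36\|\mathrm{HT}_s(\nabla f(\theta^*))\|^2}{\bar{\mu}^2} \leq \frac{36\, s\,\|\nabla f(\theta^*)\|_\infty^2}{\bar{\mu}^2} \leq \frac{288\, C^2\sigma^2 s\log d}{n\bar{\mu}^2},
\]
which matches $\varepsilon(0)$ up to the expansion factor appearing in Theorems~\ref{thm:1_f} and~\ref{thm:ada_f}.

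Third, I would combine these two events by a union bound and feed the bound into each theorem separately. For Algorithm~\ref{algo:iht}, Theorem~\ref{thm:1_f} gives linear contraction at rate $1-1/(80\bar{\kappa})$ until the iterates enter the ball of radius $(1+1/(80\bar{\kappa}))\cdot 36\|\mathrm{HT}_s(\nabla f(\theta^*))\|^2/\bar{\mu}^2$; the restriction $\alpha\geq 1/80$ is tailored so that $1+1/(80\bar{\kappa})\leq 1+1/80\leq 1+\alpha$, which absorbs the expansion and yields $\varepsilon(\alpha)$. Inverting $(1-1/(80\bar{\kappa}))^{T}\|\theta_0-\theta^*\|^2\leq \varepsilon(\alpha)$ then produces the stated $T(\alpha)$. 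For Algorithm~\ref{algo:iht_ada}, Theorem~\ref{thm:ada_f} directly delivers a squared-distance bound of the same form (with $1/(160\bar{\kappa})$ in place of $1/(80\bar{\kappa})$) and the announced double-loop count; the $\log_2(4(f(\theta_0)-f(\theta^*))/(\bar{\mu}\varepsilon(\alpha)))$ factor arises from translating Theorem~\ref{thm:ada_f}'s function-value tolerance into a squared-distance tolerance via the RSC inequality, which contributes the $4/\bar{\mu}$ conversion inside the logarithm. Support recovery under the SNR condition then follows by a direct appeal to Corollary~\ref{coro:snr}.

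The main obstacle I anticipate is bookkeeping rather than conceptual: one must carefully verify that the $(1+\alpha)$ cushion in $\varepsilon(\alpha)$ absorbs the $1+1/(80\bar{\kappa})$ factor for Algorithm~\ref{algo:iht} and the slightly different $1+1/(160\bar{\kappa})$ factor for Algorithm~\ref{algo:iht_ada}, and that the $\bar{L}+\bar{\mu}$ prefactor appearing in Theorem~\ref{thm:ada_f}'s function-value tolerance is compatible with the $4/\bar{\mu}$ translation to squared distance. These checks are routine but must be done precisely so that the single threshold $\alpha\geq 1/80$ suffices uniformly for both algorithms and so that the gradient-concentration event and the RSC/RSS event combine into the claimed $1-e^{-c_0 n}-2/d$ probability.
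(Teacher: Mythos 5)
Your proposal matches the paper's proof essentially step for step: invoke \cite[Lemma 6]{fast} for RSC/RSS, bound $\|\nabla f(\theta^*)\|_\infty^2 \leq 8C^2\sigma^2\log d/n$ via \cite[Example 7.14]{hd}, multiply by $s$ for the hard-thresholded norm to obtain the $288$ constant, union-bound the two events, and then feed the result into Theorem~\ref{thm:1_f} (with Corollary~\ref{coro:snr} for support recovery) and Theorem~\ref{thm:ada_f}, using the RSC inequality to translate the adaptive algorithm's function-value tolerance into the squared-distance bound, which is exactly how the paper produces the $4/\bar{\mu}$ factor inside the logarithm. The bookkeeping points you flag (absorbing the $1+1/(80\bar{\kappa})$ expansion via $\alpha \geq 1/80$, and reconciling the $\bar{L}+\bar{\mu}$ prefactor with the $\bar{\mu}$-based conversion) are precisely the details the paper's own proof handles, so the approach is correct and not meaningfully different.
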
 

Corollary~\ref{coro:lr} establishes the convergence properties of Algorithm~\ref{algo:iht} and Algorithm~\ref{algo:iht_ada}. The error term is of order \(\mathcal{O}\left(\frac{\bar{\kappa}^{2}s^* \log d}{n \bar{\mu}^{2}}\right)\), which is of the same order as that  in \cite[Theorem 3]{jain}, where a fixed step size is considered under the assumption that $\bar{L}$ is known.

\begin{proof}
For Algorithm~\ref{algo:iht}, the proof follows the same steps as the proof of Corollary~\ref{coro:lg}. The only difference is the upper bound for \(\frac{36 \| \hs(g^{*}) \|^{2}}{\bar{\mu}^2}\), which we provide next.

When the columns of $X $ are $C $-normalized, by \cite[Example 7.14]{hd}, with probability $1 - \frac{2}{d} $,
\[
\| g^{*} \|_{\infty}^2 = \| X^{T}\varepsilon \|_{\infty}^2 \leq 8 C^{2} \sigma^{2} \frac{\log d}{n}.
\]
Using the union bound together with \cite[Lemma 6]{fast}, and the assumptions stated in the Corollary, yield the required assumptions for Theorem~\ref{thm:1_f} to hold. Thus, this completes the proof for Algorithm~\ref{algo:iht}.

As we can see from the proof of Theorem~\ref{thm:1_f} and Theorem~\ref{thm:ada_f}, the accuracy level \( \varepsilon(0) \) is determined by the point at which a lower bound on the step size can be established. According to the formulation of \( \gamma_t \) in Algorithm~\ref{algo:iht_ada}, it is guaranteed to be at least half the step size used in Algorithm~\ref{algo:iht}. This observation implies that the accuracy level \( \varepsilon(0) \) can also be achieved by Algorithm~\ref{algo:iht_ada} when the conditions of Case \textbf{(i)} in the proof of Theorem~\ref{thm:ada_f} are satisfied.

To complete the proof for Algorithm~\ref{algo:iht_ada}, we only need to show that $f(\theta_t) - f^{*} \leq \frac{18 \| \hs(g^{*}) \|^{2}}{\bar{\mu}} $ implies $\| \theta_t - \theta^{*} \|^{2} \leq \varepsilon(0)$.

By RSC, 
\[
f(\theta_t) - f^{*} \geq \bar{\mu} \| \theta_t - \theta^{*} \|^{2} - \| \hs(g^{*}) \| \| \theta_t - \theta^{*} \|.
\]
When $f(\theta_t) - f^{*} \leq \frac{18 \| \hs(g^{*}) \|^{2}}{\bar{\mu}} $, it implies
\[
\frac{\| \hs(g^{*}) \|_{s}}{\bar{\mu}} \left( \frac{3 \| \hs(g^{*}) \|}{\bar{\mu}} + \| \theta_t - \theta^{*} \| \right) \geq \| \theta_t - \theta^{*} \|^{2}. 
\]

A necessary condition for the inequality above is
\[
\| \theta_t - \theta^{*} \|^{2} \leq \frac{36 \| \hs(g^{*}) \|_{s}^2}{\bar{\mu}^{2}} = \varepsilon(0).
\]
\end{proof}
\subsection{Generalized Linear Models}\label{sec:glm}
In this section, we consider a dataset consisting of observations \( z_i = (x_i, y_i) \) for \( i = 1, \dots, n \), where \( x_i \in \mathbb{R}^d \) denote the feature vectors, and \( y_i \in \mathbb{R} \) denotes the corresponding responses. The feature vectors are organized into the design matrix  
\[
X \triangleq \begin{pmatrix}
    x_1^{\top} \\  
    \vdots \\  
    x_n^{\top}
\end{pmatrix} \in \mathbb{R}^{n \times d},
\]  
and the responses are collected in the vector \( y \triangleq (y_1, \dots, y_n)^{\top} \in \mathbb{R}^n \).  

For notational convenience, we let \( \theta^* \) denote the true underlying parameter of the statistical model and \( f^* \) the corresponding objective function value. We assume the relationship between \( x_i \) and \( y_i \) is characterized by the conditional distribution  
\[
\Pr(y_i \mid x_i, \theta^*, \sigma) = \exp \left\{ \frac{y_i x_i^{T} \theta^* - \psi (x_i^{T} \theta^*) }{c(\sigma)} \right\},
\]  
where \( \sigma > 0 \) is a scale parameter, and \( \psi \) is the cumulant function. Given this data generation model, we define the objective function

\[
f(\theta) = \frac{1}{n} \sum_{i=1}^{n} \left( \psi(x_i^{T} \theta) - y_i x_i^{T} \theta \right).
\]  

We assume that \( \psi \) is infinitely differentiable with \( \psi''(t) > 0 \) and uniformly bounded for all \( t \in \mathbb{R} \). These assumptions are satisfied in a variety of settings, including logistic regression and multinomial regression \cite{loh}.   We assume   the feature vectors \( x_i \) are i.i.d. and drawn from a multivariate normal distribution \( N(0, \Sigma) \), where \( \Sigma \) is non-singular. Under the setting described above, the RSC condition does not hold everywhere.  However, in the described setting it can be shown that the following milder RSC condition holds \cite{loh}
\begin{subequations}
    \label{eq:test}
    \begin{empheq}[left={ f(y) - f(x) - \langle \nabla f(x), y-x \rangle \geq \empheqlbrace}]{align}
    &\frac{\mu}{2}\|y-x\|^2_2 - \frac{\tau}{2} \|y-x\|^2_1, && \ \ \text{if } \|y-x\|_2 \leq 1, \label{eq:rsc_1} \\
    &\|y-x\|_2\left( \frac{\mu}{2} - \frac{\tau}{2} \frac{\|y-x\|_1^2}{\|y-x\|_2^2} \right), && \ \ \text{if } \|y-x\|_2 > 1. \label{eq:rsc_glm}
    \end{empheq}
\end{subequations}

We now present a lemma grouping the results that we need to proceed:  the RSS condition, the condition~\eqref{eq:test}, and the order of achievable statistical accuracy for GLMs in our setting. This lemma aggregates results from \cite[Proof of Corollary 2, Appendix D.1]{loh} and \cite[Proposition 2]{nega}. Notably, while \cite[Proposition 2]{nega} is originally stated centered only around the ground truth $x = \theta^{*}$ (c.f. \eqref{eq:test}) its proof extends to any given \( x \). This implies that while our results are currently stated for $x = \theta^{*},$ to achieve optimal statistical precision, we can instead state equivalent results to those in Theorem~\ref{thm:1_f} where $\widehat{\theta}$ is such that $f(\widehat{\theta}) = \widehat{f}.$

\begin{lemma}\label{lm:glm_merge}
For the statistical models described above, with probability at least $1-c_1 d^{-1} - c_2 e^{-n}$ Assumption~\ref{asp:rsmooth} and \eqref{eq:test} hold, and
    \[
    \| \nabla f(\theta^{*}) \|_{\infty} \leq c_{0}\sqrt{\frac{\log d}{n}},
    \]
    where $c_0, c_1, c_2 > 0$ are universal constants. The constants $\mu,$ and $L$ in \eqref{eq:test} and Assumption~\ref{asp:rsmooth}, respectively, depend on $\psi,$ and $\Sigma.$ Further $\tau = c_3 \frac{\log d }{n}$ where $c_3 > 0$ is a universal constant.
\end{lemma}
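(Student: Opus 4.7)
The plan is to verify each of the three conclusions separately using existing results and then combine them via a union bound. The proof does not need to rediscover the concentration arguments; the bulk of the work has been done in \cite{loh, nega}, and we only need to check that the hypotheses of those results match our setting and that the probabilistic events can be combined.

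First I would establish the RSS (Assumption~\ref{asp:rsmooth}). Since $\psi$ is infinitely differentiable with $\psi''$ uniformly bounded, the Hessian satisfies $\nabla^2 f(\theta) = \frac{1}{n}\sum_{i=1}^n \psi''(x_i^\top \theta)\, x_i x_i^\top \preceq \|\psi''\|_\infty \cdot \frac{1}{n}\sum_{i=1}^n x_i x_i^\top$. Under sub-Gaussian covariates $x_i \sim N(0,\Sigma)$ the sample covariance concentrates around $\Sigma$ in the restricted sense, and a standard discretization argument over sparse directions (see \cite[Proof of Corollary 2, App.~D.1]{loh}) yields exactly the RSS inequality in Assumption~\ref{asp:rsmooth} with a dimension-free $L$ depending on $\|\psi''\|_\infty$ and $\sigma_{\max}(\Sigma)$, and the tolerance $\tau = c_3 \log d / n$. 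This step holds with probability at least $1 - c e^{-n}$.

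Second I would verify the bifurcated condition \eqref{eq:test}. This is precisely the content of \cite[Proof of Corollary 2, App.~D.1]{loh}, where the truncation into the two regimes arises because $\psi''$ is positive but not bounded away from zero globally (as in logistic regression). The argument uses that $\psi''$ is strictly positive on any bounded interval, so on the event $\|y-x\|_2 \leq 1$ one obtains a quadratic lower bound with curvature $\mu$ depending on $\psi$ and $\sigma_{\min}(\Sigma)$; for $\|y-x\|_2 > 1$ one invokes the convexity of $f$ together with a rescaling argument to get the linear-type bound on the right-hand side of \eqref{eq:rsc_glm}. This occurs on the same high-probability event of order $1 - c e^{-n}$ as the RSS.

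Third I would bound $\|\nabla f(\theta^*)\|_\infty$. Writing $\nabla f(\theta^*) = \frac{1}{n}\sum_i (\psi'(x_i^\top\theta^*) - y_i)\, x_i$, the data generating model implies $\mathbb{E}[y_i \mid x_i] = \psi'(x_i^\top\theta^*)$, so each coordinate of $\nabla f(\theta^*)$ is an average of independent zero-mean random variables. Under the sub-Gaussianity of $x_i$ and the moment conditions on $y_i - \psi'(x_i^\top\theta^*)$ implicit in the exponential family, each coordinate is sub-exponential with parameters controlled by $\Sigma$ and $\psi$. By Bernstein's inequality and a union bound over $d$ coordinates, $\|\nabla f(\theta^*)\|_\infty \leq c_0 \sqrt{\log d / n}$ with probability at least $1 - c_1 d^{-1}$. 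This is exactly the conclusion of \cite[Proposition 2]{nega}, whose proof extends to arbitrary centering point $x$ as the authors note.

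Finally I would combine the three events by the union bound, yielding the advertised probability $1 - c_1 d^{-1} - c_2 e^{-n}$. The main (minor) obstacle is constant tracking: ensuring that the $\mu$, $L$, and $\tau$ produced by the two cited results are mutually consistent and that the implicit restricted sparsity level required for the RSC and RSS arguments in \cite{loh} is compatible with the sparsity regime used by Sparse Polyak in Algorithm~\ref{algo:iht}. Since both cited results use essentially identical discretization arguments, the constants align up to universal factors.
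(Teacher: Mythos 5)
Your proposal is correct and takes essentially the same route as the paper: the paper offers no self-contained proof of Lemma~\ref{lm:glm_merge} either, stating only that it aggregates \cite[Proof of Corollary 2, Appendix D.1]{loh} and \cite[Proposition 2]{nega}, which is exactly what your sketch does, with the union bound and the per-claim verifications (Hessian bound via bounded $\psi''$, two-regime RSC, sub-exponential coordinate-wise Bernstein bound on $\nabla f(\theta^*)$) made explicit. The only discrepancy is cosmetic: the paper credits \cite[Proposition 2]{nega} for the two-regime condition \eqref{eq:test} and \cite{loh} for the remaining ingredients, whereas you swap these attributions, but this does not affect the mathematics.
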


In the following, we keep the definition $\bar{L} = L + 3 \tau s$, and $\bar{\mu} = \mu - 3\tau s,$ where $\mu$ is that of \eqref{eq:test}. Observe that Corollary~\ref{coro:glm} is stated for Algorithm~\ref{thm:1_f} for simplicity but an analogous statement for Algorithm~\ref{thm:ada_f} follows.

\begin{corollary}\label{coro:glm}
Let \( \{ \theta_t \}_{t \geq 0} \) denote the iterates generated by Algorithm~\ref{algo:iht} when applied to the generalized linear models described above. Set the step size rule according to
\[
\gamma_t = \frac{\max\{f(\theta_t) - \widehat{f},0\}}{5 \| \mathrm{HT}_{2s}(g_t) \|^2}. 
\]
Define \( R:= \| \theta^* \|^2 \), and \( R_0:= 4R+1 \). Assume we set $\widehat{f} = f^*,$ $\theta_0 = 0,$ and suppose \(s \geq (480 R_0 \bar{\kappa}^2)^2 s^*\). Further, assume the sample size is large enough to ensure $\bar{\mu} > 5 c_{0} \sqrt{\frac{ 2 s \log d}{n}} $.

Then, with probability at least \( 1 -  \frac{c_{1}}{d} - c_{2} e^{-n} \), we guarantee that for all $t \geq T$ where 

\begin{align*}
   T \leq   \mathcal{O}\left(\bar{\kappa}\log\left(\frac{n}{ s\log(d)}\right) \right) + \mathcal{O}\left( \bar{\kappa}^2 \log(R) \right)
\end{align*}
there holds
\begin{align*}
\|\theta_t - \theta^*\|^2 \leq \left(1 + \frac{1}{160 R_0 \bar{\kappa}^2} \right)\frac{36c_0 s \log d}{n \bar{\mu}^2}.
\end{align*}
    Moreover, if \( \theta^* \) satisfies the SNR condition \eqref{eq:snr}, we can guarantee that for all $t \geq T,$ $\|\theta_t - \theta^{*}\|^2 \leq \frac{36c_0 s \log d}{n}$ and that the support of $\theta^*$ has been recovered, i.e. $\mathcal{S}^{*} \subset \mathcal{S}_t.$
\end{corollary}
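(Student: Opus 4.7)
The plan is to split the Algorithm~\ref{algo:iht} trajectory into two phases determined by whether $\|\theta_t - \theta^*\|_2$ exceeds $1$, matching the two branches of the weaker RSC \eqref{eq:test}. The preliminary step invokes Lemma~\ref{lm:glm_merge} to secure, with the stated probability, RSS (Assumption~\ref{asp:rsmooth}), the two-branch RSC \eqref{eq:test}, and the entrywise bound $\|g^*\|_\infty \leq c_0\sqrt{\log d/n}$. From this I read off $\|\mathrm{HT}_{s+s^*}(g^*)\|^2 \leq 2 c_0^2 s \log d/n$; the sample-size hypothesis $\bar{\mu} > 5c_0\sqrt{2s\log d/n}$ then forces $\|\mathrm{HT}_{s+s^*}(g^*)\| < \bar{\mu}/5$ and in particular the target radius $36\|\mathrm{HT}_s(g^*)\|^2/\bar{\mu}^2$ is strictly less than $1$, so the terminal neighbourhood sits inside the unit ball where the standard RSC \eqref{eq:rsc_1} applies. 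This is what will let the finishing argument, and the support recovery step via Corollary~\ref{coro:snr}, go through untouched.

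For the slow phase $\|\theta_t - \theta^*\|_2 > 1$, I would re-run the proof of Lemma~\ref{lemma:base} with \eqref{eq:rsc_glm} substituted for Assumption~\ref{asp:rscvx}, using $(s+s^*)$-sparsity of $\theta_t - \theta^*$ to bound $\|\theta_t-\theta^*\|_1^2/\|\theta_t-\theta^*\|_2^2 \leq s+s^*$. This yields the one-step inequality
\[ \|\theta_{t+1}-\theta^*\|_2^2 \leq \left(1+\sqrt{s^*/s}\right)^2 \bigl(\|\theta_t-\theta^*\|_2^2 - \bar{\mu}\gamma_t\|\theta_t-\theta^*\|_2 - \gamma_t(f(\theta_t)-f^*)\bigr), \]
with the curvature term now linear rather than quadratic in the distance. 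A parallel modification of Lemma~\ref{lemma:step_size}, in which the assumption on $\bar\mu$ is exactly what keeps the governing inequality positive, still gives $\gamma_t \geq 1/(40\bar{L})$. The inflated sparsity budget $s \geq (480 R_0 \bar{\kappa}^2)^2 s^*$ makes $(1+\sqrt{s^*/s})^2 \leq 1 + 1/(240 R_0 \bar{\kappa}^2)$, small enough to be offset by the curvature term; combined with the inductive bound $\|\theta_t - \theta^*\|_2 \leq R_0$ (which follows from the same sparsity inflation controlling the expansion of hard thresholding), this delivers the slow-phase iteration budget $\mathcal{O}(\bar{\kappa}^2 \log R)$ claimed in the theorem.

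Once an iterate satisfies $\|\theta_t - \theta^*\|_2 \leq 1$, the standard RSC \eqref{eq:rsc_1} applies along the remaining trajectory (since subsequent iterates cannot leave the terminal neighbourhood established in the preliminary step), so Theorem~\ref{thm:1_f} applies verbatim with $\widehat{f} = f^*$. Its $\mathcal{O}(\bar{\kappa} \log(1/\varepsilon))$ count, instantiated at $\varepsilon = 36 c_0^2 s\log d/(n\bar{\mu}^2)$, produces the fast-phase budget $\mathcal{O}(\bar{\kappa} \log(n/(s\log d)))$. Support recovery under \eqref{eq:snr} then follows by directly appealing to the proof of Corollary~\ref{coro:snr} starting from the first fast-phase iterate.

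The main obstacle is the slow-phase analysis: one must carefully balance the linear-in-distance curvature of \eqref{eq:rsc_glm} against the quadratic expansion from hard thresholding, which is precisely what forces the sparsity budget to scale as $R_0^2 \bar{\kappa}^4 s^*$, and establish the uniform iterate bound $\|\theta_t - \theta^*\|_2 \leq R_0$ by induction. A secondary technical point is that Lemma~\ref{lemma:smooth} is proved by shifting to an auxiliary quadratic $\phi$ and using strong convexity to conclude $\phi(x) \leq \phi(z)$ for a particular $z$; under only the weak RSC, that step has to be re-examined by localizing $z$ to the unit ball around $x$, which the step-size rule of Algorithm~\ref{algo:iht} still permits.
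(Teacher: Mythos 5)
Your proposal is correct in substance and follows the same global architecture as the paper (Lemma~\ref{lm:glm_merge} for the high-probability events, a phase split at $\|\theta_t-\theta^*\|=1$ matching the two branches of \eqref{eq:test}, the observation that the sample-size condition places the statistical-precision ball strictly inside the unit ball, Theorem~\ref{thm:1_f} for the fast phase, an induction keeping the iterates bounded, and Corollary~\ref{coro:snr} for support recovery), but your slow-phase mechanism is genuinely different. You substitute the linear-in-distance branch \eqref{eq:rsc_glm} into the proof of Lemma~\ref{lemma:base} and then re-prove a step-size lower bound $\gamma_t\geq 1/(40\bar L)$ by adapting Lemma~\ref{lemma:step_size} (which does go through: the condition $\bar\mu>5c_0\sqrt{2s\log d/n}$ gives $\|\mathrm{HT}_{2s}(g^*)\|<\bar\mu/5$, so $f(\theta_t)-f^*+\langle g^*,\theta_t-\theta^*\rangle\geq(\bar\mu/2-2\|\mathrm{HT}_{2s}(g^*)\|)\|\theta_t-\theta^*\|>0$ when $\|\theta_t-\theta^*\|\geq 1$). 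The paper instead never lower-bounds $\gamma_t$ in the slow phase: it keeps only plain convexity in the one-step inequality, plugs in the Polyak ratio exactly to get a decrease of $(f(\theta_t)-f^*)^2/(5\|\mathrm{HT}_{2s}(g_t)\|^2)$, lower-bounds the gap by $\tfrac{\bar\mu}{4}\|\theta_t-\theta^*\|$ via \eqref{eq:rsc_glm}, and upper-bounds $\|\mathrm{HT}_{2s}(g_t)\|^2\lesssim\|\mathrm{HT}_{2s}(g^*)\|^2+\bar L^2\|\theta_t-\theta^*\|^2$ via Lemma~\ref{lemma:smooth}, which with the induction $\|\theta_t-\theta^*\|^2\leq R$ yields the factor $(1-\tfrac{1}{160R_0\bar\kappa^2})$. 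Your route buys a per-step contraction of order $\bar\mu/(\bar L\|\theta_t-\theta^*\|)$, i.e.\ a slow-phase budget scaling like $\bar\kappa\sqrt{R}\log R$ rather than $R_0\bar\kappa^2\log R$, so it certainly meets the stated bound (and is in fact slightly sharper in $\bar\kappa$); the paper's route buys simplicity, in that it sidesteps re-deriving Lemma~\ref{lemma:step_size} under the weak curvature condition altogether.

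Two small corrections to your commentary rather than to your argument. First, the sparsity budget $s\geq(480R_0\bar\kappa^2)^2s^*$ is \emph{not} forced by your balancing: under your slow-phase contraction you would only need $s\gtrsim\bar\kappa^2R\,s^*$; the $(R_0\bar\kappa^2)^2$ scaling is what the paper's weaker per-step contraction $(1-\tfrac{1}{160R_0\bar\kappa^2})$ requires to absorb the hard-thresholding expansion. Second, Lemma~\ref{lemma:smooth} needs no localization of $z$: for the GLMs considered, $\psi''>0$ makes $f$ (hence $\phi$) convex, so $\phi(x)\leq\phi(z)$ holds for every $z$ by convexity alone, which is how the paper can invoke the lemma unchanged; your proposed fix of confining $z$ to the unit ball would not be automatic, since $z=y-\tfrac{1}{L+3\tau s}\mathrm{HT}_s(\nabla\phi(y))$ need not be that close to $x$.
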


\begin{proof}
To establish this result we leverage that the condition \eqref{eq:test} combined with Sparse Polyak allow us to establish convergence despite the lack of RSC. More specifically, we will establish that Algorithm~\ref{algo:iht} exhibits, under the conditions described in the Corollary, at most three modes of convergence:
\begin{align}\label{eq:modes}
    \|\theta_{t+1} - \theta^*\|^2 \leq \begin{cases}
     \left(1 - \frac{1}{160 R_0 \bar{\kappa}^2} \right)\|\theta_t - \theta^*\|^2, & \text{if } \|\theta_t - \theta^{*}\| \geq 1, \\
     \left(1 - \frac{1}{160 \bar{\kappa}} \right)\|\theta_t - \theta^*\|^2, &\text{if } \|\theta_t - \theta^*\| < 1, \text{and } \|\theta_t - \theta^{*}\|^2 \geq \frac{36c_0^2s \log d}{n\bar{\mu}^2}, \\
      \left(1 + \frac{1}{160 R_0 \bar{\kappa}^2} \right)\frac{36c_0^2 s \log d}{n\bar{\mu}^2}, & \text{otherwise}.
    \end{cases}
\end{align}
Observe that under our current assumptions $\frac{36c_0^2 s \log d}{n\bar{\mu}^2} < \frac{18}{25}$ and therefore, the list above is exhaustive and the conditions on the second case are compatible. 

We begin assuming that $R \geq 1.$ We will establish that if the modes of convergence provided above hold for $R \geq 1,$ when $R < 1$ we only observe the two last cases.

\textbf{(i)} We start off by exploiting \eqref{eq:test}. Assuming that $\|\theta_t - \theta^{*}\| \geq 1$ we may follow the strategy in Lemma~\ref{lemma:base} and exploit that $f$ is convex (but not RSC) to obtain
\begin{align}\label{eq:convex}
\|\theta_{t+1} - \theta^{*}\|^2 \leq \left(1 + \sqrt{\frac{s^*}{s}} \right)^2 \left( \|\theta_t - \theta^{*}\|^2 - 2\gamma_t (f(\theta_t) - f(\theta^{*})) + 5\gamma_t^2  \|\mathrm{HT}_{2s}(g_t)\|^2 \right).
\end{align}
With the choice of step size
\[
\gamma_t = \frac{\max\{f(\theta_t) - f(\theta^{*}),0\}}{5 \|\mathrm{HT}_{2s}(g_t)\|^2},
\]
\eqref{eq:convex} simplifies to:
\begin{align}\label{eq:glm_recurrence}
\|\theta_{t+1} - \theta^{*}\|^2 \leq \left(1 + \sqrt{\frac{s^*}{s}} \right)^2 \left( \|\theta_t - \theta^{*}\|^2 - \frac{(f(\theta_t) - f(\theta^{*}))^2}{5  \|\mathrm{HT}_{2s}(g_t)\|^2} \right).
\end{align}
From Lemma ~\ref{lm:glm_merge} and under the assumption that $\bar{\mu} > 5 c_0 \sqrt{\frac{2s\log d}{n}},$ it follows that with probability at least $1-c_1d^{-1} - c_2 e^{-n}$ there holds
\begin{align*}
    f(\theta_t) - f(\theta^*) \geq \frac{\bar{\mu}}{2}\|\theta_t - \theta^{*}\| - \|\mathrm{HT}_{2s}(g^*)\|\|\theta_t - \theta^{*}\| \geq \frac{\bar{\mu}}{4}\|\theta_t - \theta^{*}\|,
\end{align*}
thus by applying the above bound onto \eqref{eq:glm_recurrence} we have
\begin{align}\label{eq:recurrence_a}
    \|\theta_{t+1} - \theta^{*}\|^2 \leq \left(1  +\sqrt{\frac{s^*}{s}} \right)^2 \left( \|\theta_t -\theta^{*}\|^2 - \frac{\bar{\mu}^2}{80\|\mathrm{HT}_{2s}(g_t)\|^2}\|\theta_t - \theta^*\|^2 \right).
\end{align}
Further, we may upper bound the norm of the gradient as
\begin{align}\label{eq:recurrence_b}
\|\mathrm{HT}_{2s}(g_t)\|^2 
&\leq 2 \left( \|\mathrm{HT}_{2s}(g^*)\|_{2s}^2 + \|\mathrm{HT}_{2s}(g_t - g^*)\|^2 \right) \\
&\stackrel{(i)}{\leq} 2 \left( \|\mathrm{HT}_{2s}(g^*)\|_{2s}^2 + 2\bar{L}^2 \|\theta_t - \theta^{*}\|^2 \right) \nonumber
\end{align}
where in $(i)$ we invoke Lemma~\ref{lemma:smooth}. Combining \eqref{eq:recurrence_a} with \eqref{eq:recurrence_b} yields
\begin{align}\label{eq:recurrence_2}
    \|\theta_{t+1} - \theta^{*}\|^2 \leq \left(1 + \sqrt{\frac{s^*}{s}} \right)^2\left(1 - \frac{\bar{\mu}^2}{160 \left( \|\mathrm{HT}_{2s}(g^*)\|^2 + 2 \bar{L}^2 \|\theta_t - \theta^{*}\|^2 \right)} \right)\|\theta_t - \theta^{*}\|^2.
\end{align}
To guarantee that the above implies our first regime of convergence we need to establish that $\|\theta_t - \theta^{*}\|^2 \leq R$ for all $t \geq 0.$  We return to this point after exploring the second and third regime which will be useful in establishing the first.

\textbf{(ii)} On the other hand, assume instead that $\|\theta_t - \theta^{*}\|^2 \leq 1$ and $\|\theta_t - \theta^*\|^2 \geq \frac{36c_0^2 s \log d}{n\bar{\mu}^2}$ then, from \eqref{eq:test} the RSC holds and therefore we can invoke the result of Theorem~\ref{thm:1_f} in which
\begin{align}\label{eq:c2}
    \|\theta_{t+1} - \theta^*\|^2 \leq \left(1 -\frac{1}{160\bar{\kappa}} \right)\|\theta_t - \theta^{*}\|^2.
\end{align}
\textbf{(iii)} If, instead $\|\theta_t - \theta^*\|^2 < \frac{36c_0^2 s \log d}{n \bar{\mu}^2} $ we have from Theorem~\ref{thm:1_f} that
\begin{align}\label{eq:c3}
    \|\theta_{t+1} - \theta^*\|^2 \leq \left(1 + \frac{1}{160\bar{\kappa}} \right)\frac{36c_0^2 s \log d}{n\bar{\mu}^2} < 1.
\end{align}
Observe that from the three cases we consider (\eqref{eq:recurrence_2}- \eqref{eq:c3}), \eqref{eq:c2} and \eqref{eq:c3} already correspond to one of our stated modes of convergence, and thus we are to establish the first. 

Clearly, when $R < 1,$ $\theta_0$ is in a region in which the RSC holds, and therefore, we will only observe the behavior in \eqref{eq:c2} and \eqref{eq:c3}. Thus, we are only to prove the first regime of convergence for $R \geq 1.$  To establish that the behavior in the first regime holds, we need to establish that $\|\theta_t - \theta^{*}\|^2 \leq R$ holds for all $t \geq 0.$ We proceed to establish this and consequently the behavior in the first regime by induction.  Note that by our initial condition $\|\theta_0 - \theta^*\|^2 = \|\theta^*\|^2 = R$ and thus the condition holds for $ t = 0$. Suppose $\|\theta_t - \theta^{*}\|^2 \leq R$ for some $t.$ If $\|\theta_t - \theta^{*}\|^2 < 1$ then either \eqref{eq:c2} and \eqref{eq:c3} hold and the proof by induction is complete. If instead, $\|\theta_t -\theta^{*}\|^2 \geq 1,$ \eqref{eq:recurrence_2} holds. Then, by induction hypothesis and under the Corollary's assumption on the sample size there holds
\begin{align*}
    \|\mathrm{HT}_{2s}(g^*)\|^2 + 2 \bar{L}^2\|\theta_t - \theta^*\|^2 \leq \left(\frac{1}{16} + 2R \right)\bar{L}^2 \leq \frac{R_0\bar{L}^2}{2},
\end{align*}
thus, combining with \eqref{eq:recurrence_2} and using that by assumption $s \geq (480 R_{0} \bar{\kappa}^{2})^{2} $ we obtain
\[
\|\theta_{t+1} - \theta^{*}\|^2 \leq \left(1 - \frac{1}{160 R_0 \bar{\kappa}^2} \right) \|\theta_t - \theta^{*}\|^2 < R.
\]

We have thus established the veracity of \eqref{eq:modes}. Observe that \eqref{eq:modes} together with $\theta_0 = 0$ and $\|\theta^*\|^2 = R$ imply that there exists $t_0 \geq 0$ fulfilling
\begin{align*}
    t_0 \leq \lceil -\log(R)/\log(1-1/160 R_{0}\bar{\kappa}^2) \rceil
\end{align*}
such that for all $t \geq t_0$
\begin{align*}
    \|\theta_{t} - \theta^{*}\|^2 < 1.
\end{align*}
Further, this implies that in at most 
\begin{align*}
    \left\lceil \log\left(\frac{36c_0^2s(1+1/(160\bar{\kappa}))\log(d)}{n\bar{\mu}^2} \right)/\log(1-1/160\bar{\kappa}) \right \rceil
\end{align*}
additional iterations optimal statistical precision is reached.
Finally, if the SNR condition holds, the term $(1+ \frac{1}{160\bar{\kappa}})$ in the third regime is replaced by 1 as a consequence of Corollary~\ref{coro:snr}.
\end{proof}

Corollary~\ref{coro:glm} recovers the result in \cite[Theorem 3]{loh} with the following similarities and differences. To achieve optimal statistical precision, as $\alpha = s\frac{\log d}{n} \to 0$ both Sparse Polyak and \cite[Theorem 3]{loh} require $\mathcal{O}(\bar{\kappa}\log(\alpha^{-1}))$ iterations. However, we require additional iterations $\mathcal{O}(\bar{\kappa}^2 \log(R))$ when $R \geq 1.$ We observe however, that our result holds under more general conditions, as we do not make the assumption that $\|\theta^*\| \leq 1$ which is necessary in \cite[Theorem 3]{loh}, where this condition can be relaxed at the expense of requiring the RSC to hold within a larger radius.

We conclude this Appendix by highlighting the fact that for both sparse linear regression and sparse GLMs be verify the rate invariance of Sparse Polyak theoretically. When the problem size increases much faster than the sample size $\frac{d}{n} \to \infty$ but $\frac{s^*\log d}{n}$ and $\Sigma$ remain constant, IHT with Sparse Polyak will reach a $\varepsilon$ neighborhood of the optimal statistical precision within at most $\mathcal{O}(\bar{\kappa}^{-1}\log(1/\varepsilon))$ for linear regression and at most $\mathcal{O}(\bar{\kappa}^{-1}\log(1/\varepsilon)) + \mathcal{O}(\bar{\kappa}^{-1}\log(R))$ when $R > 1$ in the case of GLMs. In both cases, this number does not change with increasing $d$ and $n.$ Observe that these results allow us to answer in the affirmative questions \textbf{(ii)} and \textbf{(iii)} posed in Section~\ref{sec:intro}.

\section{Experiments on real data}\label{sec:real_data}

\paragraph{Linear Regression} We consider a linear regression task using the Large-scale Wave Energy Farm dataset from the UCI Machine Learning Repository~\cite{wave}, which is publicly available under the CC BY 4.0 license. The terms of use are described at \url{https://archive.ics.uci.edu/#terms}. The goal is to predict the total power output of the wave farm based on a sparse linear model. We randomly select 120 samples from the dataset, each containing 149 features. In our experiment, we set the sparsity level to $s = 20$. For the IHT method with a fixed step size, we choose the step size as $8 \times 10^{-12}$. This value is determined via a grid search over the range $[10^{-13}, 9 \times 10^{-12}]$, as step sizes outside this interval result in poor convergence or divergence. The results are presented in Figure~\ref{fig:real}~\textbf{(left)}.

\paragraph{Logistic Regression} We evaluate sparse logistic regression using the Molecule Musk dataset~\cite{musk_(version_2)_75} from the UCI Machine Learning Repository, which is publicly available under the CC BY 4.0 license. The terms of use are described at \url{https://archive.ics.uci.edu/#terms}. The task is to classify molecules as musks or non-musks. We randomly select 120 samples from the dataset, each with 166 features. In our experiment, we set the sparsity level to $s = 20$. For the IHT method with a fixed step size, we select a step size of $1.9 \times 10^{-5}$, chosen via a grid search over the interval $[3 \times 10^{-6}, 4 \times 10^{-5}]$. Step sizes outside this range lead to poor convergence or divergence. The results are shown in Figure~\ref{fig:real}~\textbf{(right)}.

\begin{figure}[t] 
    \centering
    \subfigure{\scalebox{0.9}{
%
%
\definecolor{mycolor1}{rgb}{0.00000,0.44700,0.74100}%
\definecolor{mycolor2}{rgb}{0.85000,0.32500,0.09800}%
\definecolor{mycolor3}{rgb}{0.92900,0.69400,0.12500}%
\begin{tikzpicture}

\begin{axis}[%
width=2.3in,
height=2in,
at={(1.011in,0.642in)},
scale only axis,
xmin=0,
xmax=40,
xlabel style={font=\color{white!15!black}},
xlabel={Iteration $t$},
title = {Linear regression},
ymin=0,
ymax=8000000000000,
ylabel style={font=\color{white!15!black}},
ylabel={$f(\theta_t) - f(\theta^*)$},
axis background/.style={fill=white},
legend style={legend cell align=left, align=left, draw=white!15!black}
]
\addplot [color=mycolor1, line width=2.0pt]
  table[row sep=crcr]{%
1	7086122862822.12\\
2	393611664654.088\\
3	25037286992.016\\
4	4735607802.05896\\
5	3614060069.11333\\
6	3548814262.96877\\
7	3541752402.69914\\
8	3537905965.03631\\
9	3534247678.84739\\
10	3530610788.6541\\
11	3526986073.8043\\
12	3523372989.55278\\
13	3519771469.08277\\
14	3516181472.02517\\
15	3512602959.5943\\
16	3509035893.21791\\
17	3505480234.46146\\
18	3501935945.02337\\
19	3498402986.73442\\
20	3494881321.55718\\
21	3491370911.58562\\
22	3487871719.04459\\
23	3484383706.28937\\
24	3480906835.80523\\
25	3477441070.20695\\
26	3473986372.23834\\
27	3470542704.77184\\
28	3467110030.80799\\
29	3463688313.47507\\
30	3460277516.02853\\
31	3456877601.85066\\
32	3453488534.45005\\
33	3450110277.46118\\
34	3446742794.64397\\
35	3443386049.88338\\
36	3440040007.18885\\
37	3436704630.69402\\
38	3433379884.65612\\
39	3430065733.45565\\
40	3426762141.59595\\
41	3423469073.70266\\
42	3420186494.5234\\
43	3416914368.92727\\
44	3413652661.90445\\
45	3410401338.56576\\
46	3407160364.14224\\
47	3403929703.98472\\
48	3400709323.56341\\
49	3397499188.46743\\
50	3394299264.40446\\
51	3391109517.2003\\
};
\addlegendentry{$\gamma_t$ fixed}

\addplot [color=mycolor2, line width=2.0pt]
  table[row sep=crcr]{%
1	7086122862822.12\\
2	5504588113887.38\\
3	4276286561233.12\\
4	3322311923405.44\\
5	2581385691861.28\\
6	2005918889392.38\\
7	1558953696272.36\\
8	1211788364818.8\\
9	942132762936.036\\
10	732675994437.712\\
11	569974030899.21\\
12	443585858995.919\\
13	345402620896.344\\
14	269126630054.445\\
15	209866778038.224\\
16	163824329050.255\\
17	128048908404.409\\
18	100249002849.924\\
19	78644794275.746\\
20	61853869189.7621\\
21	48802459338.5604\\
22	38656509729.4288\\
23	30768144608.9572\\
24	24634145964.6924\\
25	19863923218.5642\\
26	16153515143.1659\\
27	13266814715.4628\\
28	11020413507.2151\\
29	9271833659.09766\\
30	7910390636.9156\\
31	6850099415.81081\\
32	6024168538.61701\\
33	5380882598.33646\\
34	4879949752.21883\\
35	4490139820.22089\\
36	4187563115.51074\\
37	3953728971.47943\\
38	3775116025.39071\\
39	3643887894.37446\\
40	3560785796.02726\\
41	3559541655.56179\\
42	3716042070.12953\\
43	3599047389.95673\\
44	3530368889.75477\\
45	3549105150.20638\\
46	3561734970.89947\\
47	3506716376.53425\\
48	3518193998.96573\\
49	3542472557.25459\\
50	3482900831.15451\\
51	3498915726.11035\\
};
\addlegendentry{Algorithm~\ref{algo:iht}}

\addplot [color=mycolor3, line width=2.0pt]
  table[row sep=crcr]{%
1	7086122862822.12\\
2	6190571605664.12\\
3	5408295937409.37\\
4	4724964915638.4\\
5	4128060862027.44\\
6	3606649920762.09\\
7	3151181650225.56\\
8	2753313974068.31\\
9	2405760282667\\
10	2102155882051.5\\
11	1836941342065.61\\
12	1605260605329.85\\
13	1402871989175.39\\
14	1226070449075.65\\
15	1071619678551.07\\
16	936692800846.692\\
17	818820565187.687\\
18	715846097992.759\\
19	625885379590.905\\
20	547292721946.669\\
21	478630614576.957\\
22	418643385919.303\\
23	366234197355.26\\
24	320444948185.597\\
25	280438723216.188\\
26	245484461223.184\\
27	214943563277.696\\
28	188258195470.195\\
29	164941071634.999\\
30	144566528805.01\\
31	126762731823.665\\
32	111204864239.22\\
33	97609180685.5308\\
34	85727811744.818\\
35	75344226080.9549\\
36	66269266679.5663\\
37	58337688554.4159\\
38	51405134471.181\\
39	45345493268.2405\\
40	40048592366.7145\\
41	35418182187.2563\\
42	31370175541.2912\\
43	27831109737.6101\\
44	24736803227.1184\\
45	22031182173.9598\\
46	19665255455.5254\\
47	17596219314.1354\\
48	15786675259.3021\\
49	14203946895.0329\\
50	12819483159.6672\\
51	11608337049.5176\\
};
\addlegendentry{Polyak}

\end{axis}

\end{tikzpicture}
       \subfigure{\scalebox{0.9}{\input{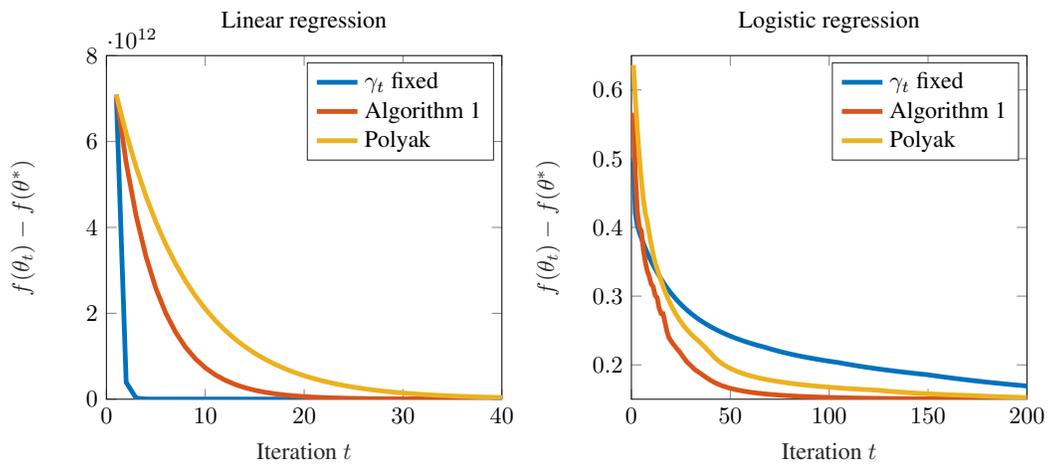}}} 
    \caption{\label{fig:real}Performance comparison of IHT with optimal constant step size, Sparse Polyak and classical Polyak when performing: \textbf{(left)} linear regression on the Wave Energy Farm data set, and \textbf{(right)} logistic regression on the Molecule Musk data set.}
\end{figure}

We observe that in Fig~\ref{fig:real}~\textbf{(right)} Sparse Polyak performs better than both classic Polyak and IHT with the fixed step-size, even if the step-size is optimized by grid search. This is expected, an adaptive step-size can adapt to the curvature at any point in the algorithm's trajectory, whereas a fixed step-size cannot. We observe that Sparse Polyak performs better than Classic Polyak. On the other hand, in Fig~\ref{fig:real}~\textbf{(left)} we observe that the best fixed step-size performs better than an adaptive step-size. We conjecture that this is due to the factor $1/5$ in Sparse Polyak which we presume is an artifact of our analysis and is not in fact necessary. Despite this fact, sparse Polyak consistently outperforms the classical Polyak rule in the high-dimensional setting.

\end{document}